\numberwithin{equation}{section}
\newcommand{\disp}{\displaystyle}
\newcommand{\Hei}{\mathbb{H}^d}
\newcommand{\bvec}[1]{\mbox{\boldmath $#1$}}
\newcommand{\G}{{\mathbb G}}
\newtheorem{thm}{Theorem}[section]
\newtheorem{prop}[thm]{Proposition}
\newtheorem{ttt}[thm]{Definition}
\newtheorem{exa}[thm]{Example}
\newtheorem{lem}[thm]{Lemma}
\newtheorem{Rem}[thm]{Remark}
\title{Results of existence and uniqueness for  the Cauchy problem of semilinear heat equations on stratified Lie groups}
\author{Hiroyuki HIRAYAMA  and  Yasuyuki OKA }
\date{}
\begin{document}
\maketitle

\footnote{2020 Mathematics Subject classification: 35R03, 35K55\\
Keyword: stratified Lie groups, semi linear heat equations, Cauchy problem, well-posedness}

\begin{abstract}
The aim of this paper is to give existence and uniqueness results for solutions of  
the Cauchy problem for semilinear heat equations on stratified Lie groups $\G$ with the homogeneous dimension $N$.
We consider the nonlinear function behaves like $|u|^{\alpha}$ or 
$|u|^{\alpha-1}u$ $(\alpha>1)$ and the initial data $u_0$ belongs to the Sobolev spaces $L^p_s(\G)$ for $1<p<\infty$ and $0<s<N/p$.  Since stratified Lie groups $\G$ include the Euclidean space ${\mathbb R}^n$ as an example,  
our results  are an extension of the existence and uniqueness results obtained by F. Ribaud on ${\mathbb R}^n$ to $\G$. It should be noted that our proof is very different from it given by Ribaud on ${\mathbb R}^n$. We adopt the generalized fractional chain rule on $\G$ to obtain the estimate for the nonlinear term, which is very different from the paracomposition technique adopted by Ribaud on ${\mathbb R}^n$. By using the generalized fractional chain rule on $\G$, we can avoid the discussion of Fourier analysis on $\G$ and make the proof more simple. 

\end{abstract}

\section{Introduction and main results}
\noindent

On the Euclidean space ${\mathbb R}^n$, there are many results for the Cauchy problem for the semilinear heat equations
\begin{align}
\begin{cases}
\partial_tu(t,x)-\Delta u(t,x)=F(u(t,x)),\ (t,x)\in {\mathbb R}^+\times{\mathbb R}^n,\\
u(0,x)=u_0(x),\ x\in{\mathbb R}^n,
\end{cases}\label{1-1}
\end{align}
where $\Delta$ is a laplacian on ${\mathbb R}^n$ and $F$ is a nonlinear function which behaves like $|u|^{\alpha}$ or 
$|u|^{\alpha-1}u$ $(\alpha>1)$ (for instance, we refer to \cite{Fuji}, \cite{Giga}, \cite{Rib}, \cite{Wei} and reference therein). Particularly, in \cite{Rib}, F. Ribaud gave the existence and uniqueness of mild solutions to  \eqref{1-1} for $u_0\in L^p_s({\mathbb R}^n)$ with $1<p<\infty$, $s>0$, and $s\neq \disp{n}/{p}$. The mild solution means solution of the integral equation
\begin{align}
u(t,x)=e^{t\Delta}u_0(x)+\disp\int_0^te^{(t-\tau)\Delta}F(u(\tau,x))~d\tau   \label{1-2}
\end{align}
in $C([0,T);L^p_s({\mathbb R}^n))$ for some $T>0$, where $e^{t\Delta}u_0(x)=(u_0*E_t)(x)$, $E_t(x)$ is the heat kernel on ${\mathbb R}^n$. Since in the case of  $L^p_s({\mathbb R}^n)$ spaces, $0<s<n/p$, those spaces do not have the algebra property generally, F. Ribaud gave the estimate for nonlinear term on those spaces using Littlewood - Paley analysis (especially, the paracomposition technique) to solve \eqref{1-2} (see \cite{Rib}, Theorem 1.2 and Theorem 1.3).

On the other hand, on stratified Lie groups ${\mathbb G}$ with the homogeneous dimension $N$ including ${\mathbb R}^n$ (1 step stratified Lie group) and the Heisenberg group (2 step stratified Lie group) as an example, in the last decade, one consider the Cauchy problem for the semilinear heat equations
\begin{align}
\begin{cases}
\partial_tu(t,x)+{\mathcal L} u(t,x)=F(u(t,x)),\ (t,x)\in {\mathbb R}^+\times{\mathbb G},\\
u(0,x)=u_0(x),\ x\in{\mathbb G},
\end{cases}\label{1-3}
\end{align}
where ${\mathcal L}$ is a sublaplacian on ${\mathbb G}$ (for instance, we refer to \cite{Bru},  \cite{Oka10}, \cite{Oka13}, \cite{Poh}, \cite{Ruz2} and reference therein). Particularly, in \cite{Bru}, T. Bruno {\it et al.} consider that $F:{\mathbb R}\rightarrow{\mathbb R}$ is smooth satisfying $D^hF(0)=0$, whenever $h\leq [s]+1$. They gave the existence and uniqueness of mild solutions to \eqref{1-3} for $u_0\in L^p_s({\mathbb G})\cap L^{\infty}({\mathbb G})$ with $1<p<\infty$ and $s\geq 0$, where the spaces $L^p_s({\mathbb G})$ are fractional Sobolev spaces on ${\mathbb G}$ (we give the definition of the space $L^p_s(\G)$ below).   
The mild solution means solution of the integral equation
\begin{align}
u(t,x)=e^{-t{\mathcal L}}u_0(x)+\disp\int_0^te^{-(t-\tau){\mathcal L}}F(u(\tau,x))~d\tau \label{1-4}
\end{align}
in $C([0,T);L^p_s(\G))$ for some $T>0$, where $e^{-t{\mathcal L}}u_0(x)=(u_0*h_t)(x)$, $h_t(x)$ is the heat kernel on ${\mathbb G}$. 

Bruno {\it et al.} use the algebra property of the space $L^p_s({\mathbb G})\cap L^{\infty}({\mathbb G})$, $1<p<\infty$ and $s\geq 0$ (see \cite{Bru}, Theorem 1.2 and Theorem 6.3). By Proposition \ref{embedding} (Sobolev embedding theorem) below, for $1<p<\infty$ and $s>N/p$, the space $L^p_s(\G)\cap L^{\infty}(\G)$  is equal to $L^p_s(\G)$. Hence,  in the case of the space $L^p_s(\G)$, $s>N/p$, our theorem \ref{th1} below is already proved in \cite{Bru}.
Since, in the case $0<s<N/p$, we can not use the algebra property of $L^p_s(\G)$ generally, the aim of this paper is to improve the conditions for the initial value by considering the nonlinear term like as a power type. Thus we will give the existence and uniqueness results for solutions of \eqref{1-4} when $u_0$ belongs to $L^p_s(\G)$, $1<p<\infty$ and $0<s<N/p$. Namely, we obtain the same results in \cite{Bru} without $L^{\infty}$ condition. This means the extending Ribaud's result on ${\mathbb R}^n$ to $\G$. To do this, for $s$ and $F$, we assume that 
\begin{align}
s> \disp{N}/{p}-{N}/{\alpha}\ \ \text{and}\ \ s\geq 0\label{1-5}
\end{align}
 and that \\
 N1) there exists $\alpha>1$ such that
 \begin{enumerate}
 \item $s\leq \disp{N(\alpha-1)}/{(p\alpha)}$,
 \item $F$: ${\mathbb R}\rightarrow{\mathbb R}$ satisfies $F(0)=0$ and there exists a constant $C>0$ such that 
 \begin{align}
 |F(x)-F(y)|\leq C|x-y|(|x|^{\alpha-1}+|y|^{\alpha-1})
 \end{align}
 \end{enumerate}
 or,\\
N2) there exists $\alpha>1$ such that
\begin{enumerate}
\item $s$ satisfies $0<s<N/p$ and
\begin{align}
\disp\frac{N(\alpha-1)}{p\alpha}<s<\left(\frac{N}{p}+1\right)\disp\frac{\alpha-1}{\alpha}, \label{apapap}
\end{align}
\item $F$: ${\mathbb R}\rightarrow{\mathbb R}$ is $[\alpha]$ times differentiable, $F^{(j)}(0)=0$ for $j=0,\cdots, [\alpha]-1$, $F^{([\alpha])}(0)=0$ if $\alpha\not\in {\mathbb N}$, and 
\begin{align}
|F^{([\alpha])}(x)-F^{([\alpha])}(y)|\leq C|x-y|^{\alpha-[\alpha]}.
\end{align}
\end{enumerate}
\begin{Rem}\label{12_8}\rm
(i) By means of  Proposition \ref{embedding} (Sobolev embedding theorem on $\G$) below, if $u\in C([0,T); L^p_s(\G))$ with $s$ as in \eqref{1-5}, then $u\in C([0,T);L^{\tilde{p}}(\G))$, where $1/\tilde{p}=1/p-s/N$. 
We note that $\tilde{p}>\alpha$ holds. Hence the term $F(u)$ in \eqref{1-4} is well defined in ${\mathcal D}^{\prime}((0,T)\times\G)$. 

(ii) Let $\alpha>1$. We can easily derive the following properties from 
the condition N2)$-$(ii); there exists a constant $C>0$ such that 
\begin{align}
|F^{(j)}(x)-F^{(j)}(y)|\leq C|x-y|(|x|^{\alpha-j-1}+|y|^{\alpha-j-1})
\end{align}
for all $j=0,\cdots,[\alpha]-1$ and 
\begin{align}
|F^{(j)}(x)|\leq C|x|^{\alpha-j}
\end{align}
for all $j=0,\cdots, [\alpha]$.
\end{Rem}

We address to state our main results as follows.

\begin{thm}\label{th1}
Let $1<p<\infty$ and $s>s_c={N}/{p}-{2}/{(\alpha-1)}$. Assume that \eqref{1-5} holds, and that N1) or N2) is fulfilled. Then for any initial value $u_0$ in $L^p_s(\G)$, there exist $T_0>0$ and a unique solution $u(t,x)$ of \eqref{1-4} in $C([0,T_0);L^p_s(\G))$. 
\end{thm}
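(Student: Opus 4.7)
The plan is to apply a Banach fixed point argument to the Duhamel map
\[
\Phi(u)(t) = e^{-t\mathcal{L}}u_0 + \int_0^t e^{-(t-\tau)\mathcal{L}} F(u(\tau))\, d\tau
\]
on a complete metric space built around $C([0,T_0]; L^p_s(\G))$. Since $L^p_s(\G)$ is not an algebra when $0 < s < N/p$, I would enrich this space with a time-weighted auxiliary norm recording higher spatial integrability, for instance $\|u\|_{Y_T} := \sup_{t \in (0,T]} t^{\sigma} \|u(t)\|_{L^r(\G)}$ with $r > p$ and a weight $\sigma > 0$ chosen so that the heat-semigroup smoothing
\[
\|e^{-t\mathcal{L}} f\|_{L^q_{s'}(\G)} \leq C\, t^{-N(1/p - 1/q)/2 - (s'-s)/2} \|f\|_{L^p_s(\G)}
\]
controls both the linear piece $e^{-t\mathcal{L}} u_0$ and the regularity loss inside the Duhamel integral. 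The subcritical condition $s > s_c = N/p - 2/(\alpha-1)$ is precisely what guarantees the convergence of the resulting time integrals, while \eqref{1-5} and Proposition \ref{embedding} provide the pointwise control of $F(u)$ needed to make sense of the integrand, as observed in Remark \ref{12_8}(i).

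The most delicate step is bounding $\|F(u)\|$ in the relevant norm, which I would accomplish through the generalized fractional chain rule on $\G$ announced in the abstract. Under N1), the Lipschitz-type condition on $F$ combined with the chain rule should yield a product estimate of the form
\[
\|F(u)\|_{L^{\tilde q}_s(\G)} \leq C \|u\|_{L^{p_1}_s(\G)} \|u\|_{L^{p_2}(\G)}^{\alpha-1}
\]
whose exponents are tied by a Hölder relation. Under N2), where $F$ is $[\alpha]$-times differentiable with Hölder $[\alpha]$-th derivative, I would combine Taylor's theorem with the chain rule applied to $F^{([\alpha])}$, exploiting the derivative bounds in Remark \ref{12_8}(ii), to reach the analogous product inequality. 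Inserting either estimate into the Duhamel formula together with the smoothing bound yields a self-map inequality
\[
\|\Phi(u)\|_{X_{T_0}} \leq C \|u_0\|_{L^p_s(\G)} + C\, T_0^{\theta} \|u\|_{X_{T_0}}^{\alpha}
\]
with $\theta > 0$, where $X_T$ denotes the joint $C_T L^p_s \cap Y_T$ norm. Applying the same chain-rule argument to $F(u) - F(v)$ via the Lipschitz bound (direct under N1), from Remark \ref{12_8}(ii) under N2)) produces
\[
\|\Phi(u) - \Phi(v)\|_{X_{T_0}} \leq C\, T_0^{\theta} \bigl(\|u\|_{X_{T_0}}^{\alpha-1} + \|v\|_{X_{T_0}}^{\alpha-1}\bigr) \|u-v\|_{X_{T_0}},
\]
so $\Phi$ contracts on a suitable ball of $X_{T_0}$ when $T_0$ is small enough, delivering the unique solution.

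The principal obstacle is the simultaneous selection of the auxiliary exponents $r$, $s'$, $p_1$, $p_2$, $\tilde q$, and $\sigma$: they must be compatible with the Sobolev embedding of Proposition \ref{embedding}, fall within the regularity window of the fractional chain rule on $\G$, satisfy the Hölder relation matching the nonlinearity of degree $\alpha$, and leave a strictly positive residual power $\theta > 0$ after time integration. The hypotheses $s > s_c$, $s > N/p - N/\alpha$, and the bracket of inequalities constraining $s$ in either N1) or N2) are exactly what render such a balanced choice feasible, and isolating this admissible parameter set is the main technical content of the argument.
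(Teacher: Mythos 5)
Your overall architecture (Duhamel formula, heat-semigroup smoothing, a fractional chain rule for the nonlinearity, Banach fixed point) matches the paper's, but there is a genuine gap at the contraction step. You propose to contract $\Phi$ in the full norm $X_{T_0}$, which contains the $C([0,T_0];L^p_s)$ component, and you assert that ``applying the same chain-rule argument to $F(u)-F(v)$'' yields
\[
\|\Phi(u)-\Phi(v)\|_{X_{T_0}}\leq CT_0^{\theta}\bigl(\|u\|_{X_{T_0}}^{\alpha-1}+\|v\|_{X_{T_0}}^{\alpha-1}\bigr)\|u-v\|_{X_{T_0}}.
\]
For the $L^p_s$ component this would require a difference estimate of the type $\|F(u)-F(v)\|_{L^p_{s_\alpha}}\lesssim \|u-v\|_{L^p_s}(\|u\|_{L^p_s}^{\alpha-1}+\|v\|_{L^p_s}^{\alpha-1})$ with $s_\alpha>0$ in case N2). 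The generalized fractional chain rule (Lemma \ref{3_11_2}) only estimates ${\mathcal L}^{\delta/2}F(u)$ for a single $u$; turning it into a two-function difference estimate in positive-order Sobolev norms would require a fractional Leibniz rule applied to $(u-v)\int_0^1F'(\theta u+(1-\theta)v)\,d\theta$ together with a chain rule for the merely H\"older-continuous derivatives of $F$, which is a substantially harder statement and is precisely the obstruction the paper (following Ribaud) avoids. The paper's proof sidesteps the issue: the contraction is run only in the weaker space $X=C([0,T_0);L^{\tilde p})$ with $1/\tilde p=1/p-s/N$, where $F(u)-F(v)$ is controlled by plain H\"older together with the Lipschitz-type bound of N1) (or of Remark \ref{12_8}(ii) with $j=0$); membership of the limit in $Y=C([0,T_0);L^p_s)$ is then recovered by showing that the Picard iterates are uniformly bounded in $Y$ --- this is where Theorem \ref{th2} and Proposition \ref{L4} enter, with the regularity loss $s\mapsto s_\alpha$ absorbed by the factor $(t-\tau)^{-(s-s_\alpha)/2}$ --- and passing to a weak-$*$ limit, identified with $u$ in ${\mathcal D}'$. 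Uniqueness is likewise proved in the $L^{\tilde p}$ topology. You should either restructure your argument along these lines or supply the missing difference estimate.

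A secondary remark: your time-weighted auxiliary norm $\sup_t t^{\sigma}\|u(t)\|_{L^r}$ is unnecessary here. Since $s>s_c$ and \eqref{1-5} force $\tilde p>\alpha$ and $\tilde p>N(\alpha-1)/2$, the exponent $\beta=N(\alpha-1)/(2\tilde p)$ appearing in the $L^{\tilde p/\alpha}\to L^{\tilde p}$ smoothing estimate is strictly less than $1$, so the Duhamel integral already converges with unweighted sup-in-time norms; the paper exploits exactly this, and the same cancellation $(s-s_\alpha)/2=\beta<1$ makes the $Y$-estimate close without weights as well.
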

\begin{Rem}\rm
(i) Since ${\mathcal L}$ is $\delta_{\lambda}$ homogeneous of degree two, if we consider the nonlinear heat equations
\begin{align}
\partial_tu(t,x)+{\mathcal L}u(t,x)=u(t,x)|u(t,x)|^{\alpha-1},\  t>0,\ x\in \G,  \label{4_8_1}
\end{align}
then for a solution $u(t,x)$ of \eqref{4_8_1} and for each $\lambda>0$, the functions $u_{\lambda}$ defined by 
\[u_{\lambda}(t,x)=\lambda^{2/(\alpha-1)}u(\lambda^2t, \delta_{\lambda}(x))\]
are also solutions, where $\delta_{\lambda}$ is a dilation on $\G$, which will be defined below, and one can check that $u(0)$ and $u_{\lambda}(0)$ have the same norm in $L^{\infty}({\mathbb R}^+,\dot{L}^p_s(\G))$ if and only if 
\[s=s_c=\disp\frac{N}{p}-\disp\frac{2}{\alpha-1}. \]
{\color{black}{(ii) Let $\alpha>1$. Assume $p\in (1,\infty)$ satisfies $p\geq \alpha$ and $p>N(\alpha-1)/2$. The condition $p>N(\alpha-1)/2$ is equivalent to $s_c<0$.  
Then Theorem \ref{th1} means that for any initial value $u_0$ in $L^p(\G)$, there exist $T_0>0$ and a unique solution $u(t,x)$ of \eqref{1-4} in $C([0,T_0);L^p(\G))$. }}\\
 {\color{black}{(iii) We define $T_{\mathrm{max}}$ by 
 \[T_{\mathrm{max}}:= \sup\{~T>0\ |\ \text{there\ exists}\ u\in C([0,T);L^p_s(\G))\ \text{solution\ of}\  \eqref{1-4}\}\]
 and assume $T_{\mathrm{max}}<\infty$. Then by \eqref{6-6} in the proof of Theorem \ref{th1} below, we can see that there exists a positive constant $C$ (independent of $u_0$) such that for any $t\in [0, T_{\mathrm{max}})$, 
\begin{align}
 T_{\mathrm{max}}-t\geq C\|u(t,\cdot)\|_{L^p_s}^{-\frac{2}{s-s_c}}, \label{5_12_1}.
 \end{align}
We note that if $s=0$, then $\disp{2}/{(s-s_c)}=\disp\left({1}/{(\alpha-1)}-{N}/{2p}\right)^{-1}$.
By the estimate \eqref{5_12_1}, we can see that
 \[\disp\lim_{t\rightarrow T_{\mathrm{max}}-0}\|u(t,\cdot)\|_{L^p_s}=\infty.\]  
}} 

\end{Rem}

To prove Theorem \ref{th1}, we need the following theorem. 

\begin{thm}\label{th2}
Let $1<p<\infty$ and 
\[N/p-N/{\alpha}<s<\disp{N}/{p},\  s\geq 0.\]
We put 
\begin{align}
s_{\alpha}=s-(\alpha-1)\disp\left({N}/{p}-s\right).\label{s_alpha}
\end{align}
If N1) or N2) is fulfilled, then for any $u\in L^p_s(\G)$, $F(u)$ belongs to $L^p_{s_{\alpha}}(\G)$. 
Furthermore, there exists a constant $C>0$ independent of $u$ such that 
\begin{align}
\|F(u)\|_{L^p_{s_{\alpha}}}\leq C\|u\|_{L^p_s}^{\alpha}.\label{2121}
\end{align}
\end{thm}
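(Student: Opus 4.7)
The strategy is to split the argument according to the sign of $s_\alpha=\alpha s-(\alpha-1)N/p$: assumption N1)-(i) is exactly $s_\alpha\leq 0$, while N2)-(i) gives $s_\alpha>0$, and the upper bound in (\ref{apapap}) rearranges to $s_\alpha<\alpha-1$. The point is that $s_\alpha$ is precisely the exponent for which Sobolev embedding on $\G$ balances an $\alpha$-th power nonlinearity, so in both cases the proof reduces to identifying the right intermediate Lebesgue exponents.

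In case N1), I would chain three estimates. Since $s_\alpha\leq 0$, Proposition~\ref{embedding} (in its dual form) gives the embedding $L^q(\G)\hookrightarrow L^p_{s_\alpha}(\G)$ with $1/q=1/p-s_\alpha/N$ (take $q=p$ when $s_\alpha=0$). Putting $y=0$ in the estimate of N1)-(ii) yields the pointwise bound $|F(u)|\leq C|u|^\alpha$, hence $\|F(u)\|_{L^q}\leq C\|u\|_{L^{\alpha q}}^\alpha$. A short computation shows $1/(\alpha q)=1/p-s/N$, so Proposition~\ref{embedding} applied again delivers $\|u\|_{L^{\alpha q}}\lesssim\|u\|_{L^p_s}$, and the three estimates concatenate to (\ref{2121}).

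In case N2), $s_\alpha\in(0,\alpha-1)$ and N2)-(ii) says $F^{([\alpha])}$ is H\"older continuous of exponent $\alpha-[\alpha]$. I would invoke the generalized fractional chain rule on $\G$---the tool the introduction singles out---which under these hypotheses yields
\[\|F(u)\|_{L^p_{s_\alpha}}\leq C\|u\|_{L^{q_1}}^{\alpha-1}\|u\|_{L^{q_2}_{s_\alpha}}\]
for any H\"older pair $1/p=(\alpha-1)/q_1+1/q_2$. Choosing $1/q_1=1/p-s/N$ forces $1/q_2=(2-\alpha)/p+(\alpha-1)s/N$, and a line of algebra gives the identity $s-s_\alpha=N/p-N/q_2$, so the embeddings $L^p_s\hookrightarrow L^{q_1}$ and $L^p_s\hookrightarrow L^{q_2}_{s_\alpha}$ both follow from Proposition~\ref{embedding}. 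The technical heart of the proof is locating the generalized fractional chain rule on $\G$ in the required form under the limited smoothness of $F$ in N2)-(ii), and verifying that the forced pair $(q_1,q_2)$ lies in $(1,\infty)^2$; this is precisely where the lower bound $s>N/p-N/\alpha$, the upper bound $s<N/p$, and the two-sided condition (\ref{apapap}) on $s$ are consumed.
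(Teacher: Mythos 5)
Your proposal follows essentially the same route as the paper: for N1) the identical three-step chain (pointwise bound $|F(u)|\leq C|u|^{\alpha}$, Sobolev embedding $L^p_s\hookrightarrow L^{(1/p-s/N)^{-1}}$, and the negative-order embedding $L^{(\alpha/p-\alpha s/N)^{-1}}\hookrightarrow L^p_{s_{\alpha}}$), and for N2) the generalized fractional chain rule (Lemma~\ref{3_11_2}) combined with Sobolev embeddings, where your choice $1/q_1=1/p-s/N$ coincides with the paper's exponents obtained by forcing $\tau_1=\tau_2$. The only cosmetic difference is that the paper states the chain rule in homogeneous norms and treats the $\|F(u)\|_p$ piece separately, which your inhomogeneous formulation absorbs.
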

\noindent
\begin{Rem}\rm
(i) Since the algebra property does not hold generally in $L^p_s(\G)$ for $s<N/p$, to prove Theorem \ref{th1},  we need  Theorem \ref{th2}. By using Theorem \ref{th2}, we can control the loss of smoothness on the $L^p_s(\G)$ scale coming from the composition by $F$. 

(ii) The condition N1)$-$(i) is equivalent to
\[s_{\alpha}\leq 0\]
and \eqref{apapap} in N2)$-$(i) is equivalent to
\[0<s_{\alpha}<\alpha-1.\]
The restriction $s_{\alpha}<\alpha-1$ is due to the lack of smoothness of the nonlinear term $F$ at $x=0$. 
(in detail, we refer to \cite{Rib}). 
\end{Rem}
In \cite{Rib}, F. Ribaud adopted the paracomposition technique to prove Theorem \ref{th2} on ${\mathbb R}^n$, but its technique, based on the Fourier transform, does not fit the case of stratified Lie groups with higher step $r>2$. So, we will prove Theorem \ref{th2} by using Lemma \ref{3_11_2} (the generalized fractional chain rule on $\G$) below, which has been proved on ${\mathbb R}^n$ by T. Kato (see \cite{Kato}, Lemma A3 and see also \cite{Chr}, Proposition 3.1).

The content of the paper is as follows. In section 2, we introduce the definition and properties of the stratified Lie groups $\G$. We also introduce the properties of the heat kernel on $\G$ and give  Proposition \ref{17244} ($L^{\alpha} - L^{\beta}$ estimate.  In section 3, we summarize the Sobolev spaces $L^p_s$ on $\G$. Especially, we introduce Proposition \ref{embedding} (Sobolev embedding theorem) and so on and give Proposition \ref{L4} ($L^p_{s+\theta} - L^p_s$ estimate). In section 4, let us assume that Theorem \ref{th2} holds. Then we prove Theorem \ref{th1}.   In section 5, we prove Theorem \ref{th2} utilizing the generalized fractional chain rule on $\G$. Finally, in section 6, we summarize the needed lemmas to prove Theorem \ref{th2}. Especially, we give Lemma \ref{3_11_2} (the generalized fractional chain rule on $\G$).

Throughout this paper, the letters $C$ and so on will be used to denote positive constants, which are independent of the
main variables involved and whose values may vary at every occurrence.
By writing $f\lesssim g$, we mean $f\leq Cg$ for some positive constant $C>0$. The notation $f\sim g$ will stand for $f\lesssim g$ and $g\lesssim f$.

\section{The stratified Lie groups}
\subsection{Definition and properties}
\noindent

We recall the definition of the stratified Lie groups (see \cite{Ugu}, \cite{Fis}, \cite{Folland2}, \cite{Fol}, \cite{Ruz}, \cite{Var2} and reference therein).  In particular, we adopt the definition introduced in \cite{Ugu}. A Lie group $({\mathbb R}^d,\cdot~)$ is called a stratified Lie group (a homogeneous Carnot group) and is denoted by ${\mathbb G}=({\mathbb R}^d,\ \cdot\ ,\delta_{\lambda})$ if it satisfies the following two conditions:
\begin{enumerate}
\item For $d=d_1+\cdots+d_r$, $d_1,\cdots,d_r\in {\mathbb N}$ and $r\geq 1$, the decomposition ${\mathbb R}^d={\mathbb R}^{d_1}\times\cdots\times{\mathbb R}^{d_r}$ holds and for any $\lambda>0$, the dilation $\delta_{\lambda}$ : ${\mathbb R}^d\rightarrow{\mathbb R}^d$ defined by
\[\delta_{\lambda}(x)=\delta_{\lambda}(x^{(1)},\cdots,x^{(r)}):=(\lambda x^{(1)},\lambda^2 x^{(2)},\cdots, \lambda^rx^{(r)}),\]
where $x^{(k)}\in{\mathbb R}^{d_k}$ for $k=1,\cdots, r$, is an automorphism of the group $({\mathbb R}^d,\cdot)$.  
\item Let $d_1$ be as in (i) and $X_1,\cdots,X_{d_1}$ be the left invariant vector fields on $({\mathbb R}^d,\cdot)$ such that $X_k(0)=\disp\left.\frac{\partial}{\partial x_k}\right|_{x=0}$ for $k=1,\cdots, d_1$. Then H${\rm\ddot{o}}$rmander condition
\[\mathrm{rank}(\mathrm{Lie}\{X_1,\cdots,X_{d_1}\}(x))=d\]
holds for any $x\in{\mathbb R}^d$, that is, the iterated commutators of $X_1,\cdots,X_{d_1}$ span the Lie algebra ${\mathfrak g}$ of $({\mathbb R}^d,\cdot)$. 
\end{enumerate}
As a remark, the definition of the stratified Lie groups implies that  the Lie algebra ${\mathfrak g}$ of ${\mathbb G}$ admits a stratification, {\it i.e.} a direct sum decomposition 
\[ {\mathfrak g}=V_1\oplus V_2\oplus\cdots\oplus V_r\]
such that $[V_1,V_{i-1}]=V_i$ if $2\leq i\leq r$ and $[V_1,V_r]=\{0\}$ and 
it is known that the composition law $\cdot$ of the stratified Lie groups takes the following form (for instance, see \cite{Ugu}).
\begin{prop}[\cite{Ugu}]\label{K80}
Let ${\mathbb G}=({\mathbb R}^d,\ \cdot\ ,\delta_{\lambda})$ be the stratified Lie group. The  composition law $\G\times\G\ni (x,y)\mapsto x\cdot y\in\G$ has polynomial component functions. Moreover we have for $x,y\in{\mathbb R}^d$, 
\[(x\cdot y)_1=x_1+y_1,\ (x\cdot y)_j=x_j+y_j+Q_j(x,y),\ 2\leq j\leq d,\]
and the following facts hold:
\begin{enumerate}
\item[\rm{(i)}] $Q_j(x,y)$ only depends on $x_1,\cdots,x_{j-1}$ and $y_1,\cdots,y_{j-1}$,
\item[\rm{(ii)}] $Q_j(x,y)$ is a sum of mixed monomials in $x,y$,
\item[\rm{(iii)}] there exists $\sigma_j>0$ such that $Q_j(\delta_{\lambda}x,\delta_{\lambda}y)=\lambda^{\sigma_j}Q_j(x,y)$.
\end{enumerate}
More precisely, $Q_j(x,y)$ only depends on the $x_k$'s and $y_k$'s with $\sigma_k<\sigma_j$.
\end{prop}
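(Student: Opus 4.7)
The plan is to use the Baker--Campbell--Hausdorff (BCH) formula together with the stratification of the Lie algebra $\mathfrak{g}$. Since $\mathfrak{g}=V_1\oplus\cdots\oplus V_r$ with $[V_1,V_{i-1}]=V_i$ and $[V_1,V_r]=\{0\}$, every iterated bracket of length at least $r+1$ in generators from $V_1$ vanishes, so $\mathfrak{g}$ is $r$-step nilpotent. Consequently the exponential map $\exp\colon\mathfrak{g}\to\mathbb{G}$ is a polynomial diffeomorphism and the BCH series terminates after finitely many terms:
\[
\exp(X)\cdot\exp(Y)=\exp\!\Big(X+Y+\tfrac{1}{2}[X,Y]+\tfrac{1}{12}[X,[X,Y]]-\tfrac{1}{12}[Y,[X,Y]]+\cdots\Big),
\]
where every term beyond $X+Y$ is an iterated Lie bracket of length between $2$ and $r$.

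First I would fix a basis $\{e_1,\ldots,e_d\}$ of $\mathfrak{g}$ adapted to the stratification so that the first $d_1$ vectors span $V_1$, the next $d_2$ vectors span $V_2$, and so on, ordering the indices so that the corresponding weights $\sigma_k\in\{1,\ldots,r\}$ (with $e_k\in V_{\sigma_k}$) satisfy $\sigma_1\leq\cdots\leq\sigma_d$. Under this identification $\delta_\lambda$ multiplies the $k$-th coordinate by $\lambda^{\sigma_k}$, and the group law $x\cdot y$ corresponds to the BCH product applied componentwise, which is polynomial in the entries of $x$ and $y$ by termination of the series. Any iterated bracket $[e_{k_1},[e_{k_2},[\cdots,e_{k_h}]\cdots]]$ with $h\geq 2$ lies in $V_{\sigma_{k_1}+\cdots+\sigma_{k_h}}$, so every such contribution lies in $V_2\oplus\cdots\oplus V_r$. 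This immediately yields $(x\cdot y)_j=x_j+y_j$ for $1\leq j\leq d_1$, and in particular $(x\cdot y)_1=x_1+y_1$, while for general $j$ it produces the decomposition $(x\cdot y)_j=x_j+y_j+Q_j(x,y)$ in which $Q_j$ collects the bracket contributions.

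It then remains to verify (i)--(iii). For (iii), a monomial contributing to the $V_{\sigma_j}$-component must come from a bracket with $\sigma_{k_1}+\cdots+\sigma_{k_h}=\sigma_j$, so applying $\delta_\lambda$ to each argument rescales that monomial by $\lambda^{\sigma_j}$, giving homogeneity of degree $\sigma_j$. For the refined dependence statement, since $h\geq 2$ and every $\sigma_{k_\ell}\geq 1$, we have $\sigma_{k_\ell}\leq\sigma_j-1<\sigma_j$, and the monotone ordering of weights forces $k_\ell<j$; this simultaneously gives (i) and the sharper claim that $Q_j$ depends only on the $x_k,y_k$ with $\sigma_k<\sigma_j$. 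For (ii), every BCH term past $X+Y$ is a Lie polynomial in which both $X$ and $Y$ must occur (antisymmetry kills brackets involving only one letter), so its expansion in the basis produces mixed monomials in the coordinates of $x$ and $y$. The main technical care is in setting up the adapted basis with non-decreasing weights and tracking how the BCH series, identified with the group law via $\exp$, respects the grading; once this is in place, all the structural claims reduce to elementary bookkeeping with the stratification.
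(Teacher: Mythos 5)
The paper does not actually prove this proposition: it is quoted from \cite{Ugu} as a known structural fact, so there is no in-paper argument to compare against. Your BCH route is a legitimate and fairly standard way to prove the analogous statement for the group obtained by endowing the stratified algebra $\mathfrak{g}$ with the Baker--Campbell--Hausdorff product: the grading relation $[V_i,V_j]\subseteq V_{i+j}$, the termination of the series in an $r$-step nilpotent algebra, the additivity of weights under brackets, and the fact that every bracket word beyond $X+Y$ contains both letters do yield polynomiality, the mixed-monomial structure (ii), the homogeneity (iii), and the triangular dependence in (i) exactly as you describe. The proof in \cite{Ugu} is different and more elementary: it works directly in the given coordinates, gets $(x\cdot y)_j=x_j+y_j+Q_j(x,y)$ with $Q_j(x,0)=Q_j(0,y)=0$ from smoothness of the group law together with $x\cdot 0=0\cdot x=x$ (which already forces every Taylor monomial of $Q_j$ to be mixed), derives (iii) from the hypothesis that $\delta_\lambda$ is a group automorphism, and then deduces polynomiality and the refined dependence from homogeneity combined with $\sigma_k\geq 1$; no BCH formula is needed.

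The one genuine gap in your argument is the step ``the group law $x\cdot y$ corresponds to the BCH product applied componentwise.'' The proposition is a statement about the coordinates on $\mathbb{R}^d$ that come with the definition of the homogeneous Carnot group, and nothing in that definition says these are exponential coordinates of the first kind adapted to the stratification. What you have proved is the statement for the BCH realization on $\mathfrak{g}$. To transfer it, you must show that $\exp\colon\mathfrak{g}\to\mathbb{G}$, read as a map $\mathbb{R}^d\to\mathbb{R}^d$, is a diffeomorphism commuting with the dilations whose differential at the origin is the identity, and then check that conjugating the BCH law by such a map preserves all the listed properties; commutation with $\delta_\lambda$ forces each component of this map to be $\delta_\lambda$-homogeneous of degree $\sigma_j$, hence of the form $\xi_j+(\text{terms in lower-weight variables})$, which is provable but is precisely the bookkeeping that the direct argument of \cite{Ugu} avoids. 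As written, your proof establishes the conclusion in a possibly different coordinate system from the one the proposition refers to.
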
 
\noindent
The number $r$ is called the step of ${\mathbb G}$ and the number
\[N=\disp\sum_{k=1}^rkd_k\]
is called the homogeneous dimension of ${\mathbb G}$, respectively.  
 Let $X_1,\cdots, X_d$ be the Jacobian basis of ${\mathfrak g}$, i.e.
\[X_j(0)=\disp\left.\frac{\partial}{\partial x_j}\right|_{x=0}\ \text{for}\  j=1,\cdots, d\]
(for details, see \cite{Ugu}).
We can also denote the Jacobian basis $\{X_1,\cdots,X_d\}$ by
\[X_1^{(1)},\cdots,X_{d_1}^{(1)},\cdots,X_1^{(r)},\cdots,X_{d_r}^{(r)}.\]
Then $X_k^{(i)}$, $1\leq i\leq r$, takes the form
\[X_k^{(i)}=\disp\frac{\partial}{\partial x_k^{(i)}}+\disp\sum_{l=i+1}^r\disp\sum_{m=1}^{d_l}a_{k,m}^{(i,l)}(x^{(1)},\cdots,x^{(l-i)})\disp\frac{\partial}{\partial x_m^{(l)}},\ 1\leq k\leq d_i,\]
where $a_{k,m}^{(i,l)}$ is a $\delta_{\lambda}$-homogeneous polynomial function of degree $l-i$ (see \cite{Ugu}, \cite{Ruz} and \cite{Var2} for more details), namely, $a_{k,m}^{(i,j)}(\delta_{\lambda}(x))=\lambda^{l-i}a_{k,m}^{(i,j)}(x)$. Especially, for $i=1$, we have
\begin{align*}
X_k^{(1)}=\disp\frac{\partial}{\partial x_k^{(1)}}+\disp\sum_{l=2}^r\disp\sum_{m=1}^{d_l}a_{k,m}^{(1,l)}(x^{(1)},\cdots,x^{(l-1)})\disp\frac{\partial}{\partial x_m^{(l)}},\ 1\leq k\leq d_1.
\end{align*}
We denote by $X_k$ for $1\leq k\leq d_1$, the left invariant vector fields $X_k^{(1)}$ in the first stratum of ${\mathbb G}$. The left invariant vector fields $X_1,\cdots, X_{d_1}$ are called the (Jacobian) generators of ${\mathbb G}$. A sublaplacian of ${\mathbb G}$ is denoted by ${\mathcal L}=-\disp\sum_{i=1}^{d_1}X_i^2$. The operator ${\mathcal L}$ is not elliptic but hypoelliptic. ${\mathcal L}$ is $\delta_{\lambda}$ homogeneous of degree two, that is, for any $\lambda>0$, we have 
\[{\mathcal L}(u(\delta_{\lambda}(x)))=\lambda^2({\mathcal L}u)(\delta_{\lambda}(x))\]
for any $x\in\G$. Since the system $\bold{X}=\{X_1,\cdots,X_{d_1}\}$ satisfies the H${\rm\ddot{o}}$rmander condition,  the Carnot-Carath${\rm \acute{e}}$odory distance $\rho_{\bold{X}}(x,x^{\prime})$ can be defined (see \cite{Ugu} and \cite{Var2}  for details). We denote by $\rho(x)$ the distance from the origin, {\it i.e}. $\rho(x)=\rho_{\bold{X}}(\bvec{e},x)$, where $\bvec{e}$ is an identity element of ${\mathbb G}$.  The homogeneous of degree of  $\rho$  is one, that is, 
\begin{align*}
\rho(\delta_{\lambda}(x))=\lambda \rho(x),\ x\in {\mathbb G}
\end{align*}
for any $\lambda>0$ (see \cite{Ugu}, Proposition 5.2.6).
The following estimate also holds:
\begin{align*}
 \rho({x^{\prime}}^{-1}\cdot x)\leq \rho(x)+\rho({x^{\prime}}).
\end{align*}
Define
\begin{align*}
|x|_{\mathbb G}=\disp\left(\disp\sum_{j=1}^r|x^{(j)}|^{\frac{2r!}{j}}\right)^{\frac{1}{2r!}}
\end{align*}
for $x=(x^{(1)},\dots,x^{(r)})\in {\mathbb G}$. Then there exists a constant $C>0$ such that 
\[C^{-1}|x|_\G\leq \rho(x)\leq C|x|_\G\]
for any $x\in {\mathbb G}$ (see \cite{Ugu}, Proposition 5.1.4 and Theorem 5.2.8). Since the homogeneous of degree of $|\cdot|_{\mathbb G}$ is also one,  we deal with $|\cdot|_{\mathbb G}$ as a homogeneous norm on ${\mathbb G}$. 

The stratified Lie groups $\G$ are locally compact Hausdorff spaces and their  Haar measure of  ${\mathbb G}$ is the Lebesgue measure $dx=dx_1dx_2\cdots dx_d$.
We can see that the following relation holds;
\[\disp\int_{{\mathbb G}}f(\delta_{\lambda}(x))dx=\lambda^{-N}\disp\int_{{\mathbb G}}f(x)dx.\]
Let $f$ and $h$ be suitable functions. Then the convolution $f*h$ of $f$ with $h$ on ${\mathbb G}$ is defined by
\[(f*h)(x)=\disp\int_{\mathbb G}f(x^{\prime})h({x^{\prime}}^{-1}\cdot x)dx^{\prime}=\disp\int_{{\mathbb G}}f(x\cdot {x^{\prime}}^{-1})h(x^{\prime})dx^{\prime}.\] 
The convolution $*$ is non commutative. The relationship between the left invariant vector fields $X_i$ and the convolution $*$ is $X_i(f*h)(x)=(f* X_ih)(x)$.

We set 
\[L^p(\G)=\disp\left\{f\ |\ \|f\|_p<\infty\right\}\]
with a norm $\|\cdot\|_p$ defined by 
\[\|f\|_p=\left(\disp\int_{\G}|f(x)|^pdx\right)^{\frac{1}{p}}\]
for $1\leq p<\infty$ and if $p=\infty$, we set 
\[L^{\infty}(\G)=\{f\ |\ \|f\|_{\infty}<\infty\}\]
with a norm $\|\cdot\|_{\infty}$ defined by 
\[\|f\|_{\infty}=\mathrm{inf}\disp\left\{M\ |\ \text{the measure of}\ \{x\in\G\ |\ |f(x)|>M\}\ \text{is}\  0\right\}.\]
Let $I=(i_1,\cdots,i_{\beta})\in \{1,\cdots,d_1\}^{\beta}$, $\beta\in{\mathbb N}_0$. Then we put $|I|=\beta$ and $X^I=X_{i_1}\cdots X_{i_\beta}$, with the convention that $X^I=\mathrm{Id}$ if $\beta=0$. Now we denote by $S(\G)$
the class of all functions $f\in C^{\infty}(\G)$ such that 
\[\|f\|_{\kappa}=\disp\sup_{|I|\leq \kappa\atop x\in\G}(1+\rho(x))^{\kappa}|X^If(x)|<\infty.\]  

We recall a polar coordinate transform on $\G$ (in detail, see \cite{Fol}). In this paper, 
we say $u$ is radial if there exists $v:[0,\infty)\rightarrow {\mathbb R}$ such that $u(x)=v(|x|_{\G})$. Let ${\mathbb S}=\{x\in\G:|x|_{\G}=1\}$. Then there exists a unique Radon measure $\mu$ on ${\mathbb S}$ such that for any $u\in L^1(\G)$, 
\[\disp\int_{\G}u(x)~dx=\disp\int_0^{\infty}\int_{{\mathbb S}}u(ry)r^{N-1}~d\mu(y)dr\]
(see \cite{Fol}, Proposition (1.15)). Especially, if $u$ is radial, then we have
\[\disp\int_{\G}u(x)~dx=\omega_N\disp\int_0^{\infty}v(r)r^{N-1}dr,\]
where $\omega_N=\int_{{\mathbb S}}~d\mu(y)$.

\begin{exa}[1-dimensional Heisenberg group ${\mathbb H}=({\mathbb R}^3,\ \cdot\ ,\delta_{\lambda} )$]\rm 

The group law of ${\mathbb H}$ is given by
\[(z,s)\cdot (z^{\prime},s^{\prime})=\disp\left(z+z^{\prime},s+s^{\prime}+\disp\frac{1}{2}(yx^{\prime}-xy^{\prime})\right),\]
where, $z=(x,y),z^{\prime}=(x^{\prime},y^{\prime})\in{\mathbb R}^{2}$, $s\in{\mathbb R}$. The dilation $\delta_{\lambda}$ is
\[\delta_{\lambda}(x,y,s)=(\lambda x, \lambda y, \lambda^2 s).\]
Moreover the left invariant vector fields are given by
\begin{align*}
X = \disp\frac{\partial}{\partial x}+\frac{1}{2}y\frac{\partial}{\partial s},\ 
Y = \disp\frac{\partial}{\partial y}-\frac{1}{2}x\frac{\partial}{\partial s},\ 
S= \disp\frac{\partial}{\partial s}
\end{align*}
and it holds that
\[[X,Y]=-S,\ [X,X]=[Y,Y]=[X,S]=[Y,S]=[S,S]=0.\]
Thus, the step of the Heisenberg group ${\mathbb H}$ is $2$ and the homogeneous dimension of  the Heisenberg group ${\mathbb H}$ is $4$. The sublaplacian ${\mathcal L}$ is 
\[{\mathcal L}=-X^2-Y^2.\]
\end{exa}

\begin{exa}[A group of H type ${\mathbb H}_2^2=({\mathbb R}^6,\ \cdot\ ,\delta_{\lambda})$]\rm
The group law of ${\mathbb H}^2_2$ is given by
\[(z,s)\cdot (z^{\prime},s^{\prime})=
\begin{pmatrix}
  z+z^{\prime}\\
  s_1+s_1^{\prime}+\disp\frac{1}{2}\left(-x_1x_2^{\prime}+x_2x_1^{\prime}-y_1y_2^{\prime}+y_2y_1^{\prime}\right)\\\\
  s_2+s_2^{\prime}+\disp\frac{1}{2}\left(x_1y_1^{\prime}-x_2y_2^{\prime}-x_1^{\prime}y_1+y_2x_2^{\prime}\right)
\end{pmatrix}
,\]
where, $z=(x_1,x_2,y_1,y_2)$, $z^{\prime}=(x_1^{\prime},x_2^{\prime},y_1^{\prime},y_2^{\prime})\in{\mathbb R}^{4}$, $s=(s_1,s_2), s^{\prime}=(s_1^{\prime},s_2^{\prime})\in{\mathbb R}^2$.  
The dilation $\delta_{\lambda}(x_1,x_2,y_1,y_2,s_1,s_2)$ is 
\[\delta_{\lambda}(x_1,x_2,y_1,y_2,s_1,s_2)=(\lambda x_1, \lambda x_2, \lambda y_1, \lambda y_2, \lambda^2 s_1, \lambda^2s_2).\]
Moreover the left invariant vector fields are given by
\begin{align*}
X_1 &= \disp\frac{\partial}{\partial x_1}+\frac{1}{2}\left(x_2\frac{\partial}{\partial s_1}-y_1\frac{\partial }{\partial s_2}\right),\  
X_2 = \disp\frac{\partial}{\partial x_2}+\frac{1}{2}\left(-x_1\frac{\partial}{\partial s_1}+y_2\frac{\partial }{\partial s_2}\right),\\ 
Y_1 &= \disp\frac{\partial}{\partial y_1}-\frac{1}{2}\left(y_2\frac{\partial}{\partial s_1}+x_1\frac{\partial }{\partial s_2}\right),\ 
Y_2 = \disp\frac{\partial}{\partial y_2}-\frac{1}{2}\left(-y_1\frac{\partial}{\partial s_1}-x_2\frac{\partial }{\partial s_2}\right),\\
S_1 &= \disp\frac{\partial}{\partial s_1},\ S_2 = \disp\frac{\partial}{\partial s_2}
\end{align*}
and it holds that
\[[X_1,Y_1]=-S_1,\ [X_2,Y_2]=-S_2\]
and others are $0$. Thus, the step of ${\mathbb H}_2^2$ is $2$ and the homogeneous dimension of  ${\mathbb H}_2^2$ is $8$. The sublaplacian ${\mathcal L}$ is 
\[{\mathcal L}=-X_1^2-X^2_2-Y_1^2-Y_2^2.\]
\end{exa}

\begin{exa}[${\mathbb I}=({\mathbb R}^4,\ \cdot\ ,\delta_{\lambda} )$]\rm The group low of ${\mathbb I}$ is given by
\[(x_1,x_2,x_3,x_4)\cdot (y_1,y_2,y_3,y_4)=
\begin{pmatrix}
  x_1+y_1\\
  x_2+y_2\\
x_3+y_3+1/2(y_2x_1-y_1x_2)\\
x_4+y_4+1/2(y_3x_1-y_1x_3)+1/12(x_1-y_1)(y_2x_1-y_1x_2)
\end{pmatrix}.
\]
The dilation $\delta_{\lambda}(x_1,x_2,x_3,x_4)$ is 
\[\delta_{\lambda}(x_1,x_2,x_3,x_4)=(\lambda x_1, \lambda x_2, \lambda^2 x_3, \lambda^3 x_4).\]
Moreover the left invariant vector fields are given by
\begin{align*}
X_1 &=\disp\frac{\partial }{\partial x_1}-\disp\frac{1}{2}x_2\frac{\partial}{\partial x_3}-\disp\left(\frac{1}{2}x_3+\frac{1}{12}x_1x_2\right)\frac{\partial }{\partial x_4},\ 
X_2 =\disp\frac{\partial}{\partial x_2}+\disp\frac{1}{2}x_1\frac{\partial}{\partial x_3}+\disp\frac{1}{12}x_1^2\disp\frac{\partial}{\partial x_4},\\
X_3 &=\disp\frac{\partial}{\partial x_3}+\disp\frac{1}{2}x_1\frac{\partial}{\partial x_4},\ X_4 =\disp\frac{\partial}{\partial x_4}
\end{align*}
and 
it holds that
\[[X_1,X_2]=X_3,\ [X_1,[X_1,X_2]]=X_4\]
and others are $0$. Thus, the step of ${\mathbb I}$ is $3$ and the homogeneous dimension of  ${\mathbb I}$ is $7$. The sublaplacian ${\mathcal L}$ is 
\[{\mathcal L}=-X_1^2-X^2_2.\]
\end{exa}

\subsection{The heat kernel}
\noindent

We can construct a heat semigroup $\{e^{-t{\mathcal L}}\}_{t>0}$ of linear operators on $L^1(\G)+L^{\infty}(\G)$ with the infinitesimal generator $-{\mathcal L}$ by a theorem of G. We denote by ${\mathcal D}={\mathcal D}(\G)$ the space of smooth functions with compact support. A. Hunt in \cite{Hunt}, whose properties are summarized as follows. 

\begin{prop}[\cite{Fis}, \cite{Folland2}, \cite{Hunt}]\label{3-1}
\begin{enumerate}
\item[\rm(i)] There exists a unique semigroup $\{e^{-t{\mathcal L}}\}_{t>0}$ of linear operators on $L^1(\G)+L^{\infty}(\G)$ satisfying conditions:
\begin{enumerate}
\item[\rm(a)] $e^{-t{\mathcal L}}f(x)=(f*h_t)(x)$, where $h_t(x)=h(t,x)$ is $C^{\infty} ((0,\infty)\times \G)$. Moreover,
$h_t\in{\mathcal S}(\G)$.
\item[\rm(b)] $h_t(x)\geq 0$, $\int_{{\mathbb G}}h_t(x)~dx=1$, $h_t(x)=h_t(x^{-1})$, $({\partial}/{\partial t}+{\mathcal L})h_t=0$ and $h_{r^2t}(\delta_r(x))=r^{-N}h_t(x)$, $r>0$, $x\in{\mathbb G}$, where $N$ is the homogeneous dimension of $\G$.
\item[\rm(c)] $\{e^{-t{\mathcal L}}\}_{t>0}$ is a contraction semigroup and strongly continuous on $L^p(\G)$ for $1<p<\infty$. 
Furthermore, for any $f\in{\mathcal D}(\G)$ and any $p\in [1,\infty]$, we have the convergence 
\[\disp\left\|\disp\frac{1}{h}(e^{-h{\mathcal L}}f-f)+{\mathcal L}f\right\|_p \rightarrow 0\]
as $h\rightarrow 0$.
\end{enumerate}
\item[\rm(ii)] Let ${\mathcal L}_p$ be minus the infinitesimal generator of $\{e^{-t{\mathcal L}}\}_{t>0}$ on $L^p(\G)$. Then 
\begin{enumerate}
\item[\rm(a)] For $1<p<\infty$, ${\mathcal L}_p$ is a closed operator on $L^p(\G)$ whose domain is dense in $L^p(\G)$.
\item[\rm(b)] For $1<p<\infty$, ${\mathcal D}\subset \mathrm{Dom}~({\mathcal L}_p)$. Furthermore ${\mathcal L}_pu={\mathcal L}u$ for $u\in{\mathcal D}$.
\item[\rm(c)] ${\mathcal L}_p$ is the maximal restriction of ${\mathcal L}$ to $L^p(\G)$, $1<p<\infty$, that is, $\mathrm{Dom}~({\mathcal L}_p)$ is the set of all $f\in L^p(\G)$ such that the distribution derivative ${\mathcal L}f$ is in $L^p(\G)$, and ${\mathcal L}_pf={\mathcal L}f$. Also, ${\mathcal L}_p$ is the smallest closed extension of ${\mathcal L}|_{{\mathcal D}}$ on $L^p(\G)$. 
\end{enumerate}
\end{enumerate}
\end{prop}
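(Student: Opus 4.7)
The overall strategy is to build the semigroup on $L^2(\G)$ via the spectral theorem, extract a convolution-kernel representation from left-invariance, and then transfer everything to $L^p$ by interpolation and density.

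First I would verify that ${\mathcal L}=-\sum_{i=1}^{d_1}X_i^2$, defined on ${\mathcal D}(\G)$, is a non-negative symmetric operator on $L^2(\G)$: integration by parts, together with unimodularity of $\G$ and left-invariance of the $X_i$, gives $\langle{\mathcal L}\varphi,\varphi\rangle=\sum_i\|X_i\varphi\|_2^2\geq 0$. The Friedrichs extension (or equivalently Hunt's construction of the convolution semigroup with generator ${\mathcal L}$) produces a non-negative self-adjoint extension ${\mathcal L}_2$, and functional calculus then yields a strongly continuous contraction semigroup $\{e^{-t{\mathcal L}_2}\}_{t>0}$ on $L^2$.

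For (i)(a) I would use that ${\mathcal L}$ commutes with left translations, hence so does $e^{-t{\mathcal L}_2}$; any such bounded operator is right convolution with a distribution, giving $e^{-t{\mathcal L}}f=f*h_t$. H\"ormander's theorem applied to the parabolic operator $\partial_t+{\mathcal L}$ (hypoelliptic because $\{X_1,\dots,X_{d_1}\}$ is bracket-generating) upgrades $h$ to $C^\infty((0,\infty)\times\G)$. The scaling identity $h_{r^2t}(\delta_r x)=r^{-N}h_t(x)$ follows by applying $e^{-t{\mathcal L}}$ to $u\circ\delta_r$ and using that ${\mathcal L}$ is $\delta_\lambda$-homogeneous of degree two; combined with hypoellipticity and Gaussian-type upper bounds for $h_t$ and its $X$-derivatives, it delivers $h_t\in{\mathcal S}(\G)$. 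The remaining items in (i)(b) are then quick: positivity and $\int h_t=1$ follow from identifying $\{e^{-t{\mathcal L}}\}$ as a Markovian convolution semigroup in the sense of Hunt; symmetry $h_t(x)=h_t(x^{-1})$ comes from self-adjointness of ${\mathcal L}$ together with the unimodularity of $\G$; and $(\partial_t+{\mathcal L})h_t=0$ is immediate from differentiating the semigroup law. The $L^p$ contraction and strong continuity in (i)(c) then follow by interpolating the trivial $L^1$ and $L^\infty$ bounds (consequences of $h_t\geq 0$ and $\|h_t\|_1=1$) and extending from ${\mathcal D}(\G)$ by density.

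Part (ii) is standard $C_0$-semigroup theory once (i) is in hand: ${\mathcal L}_p$ is the minus-generator of a strongly continuous semigroup, hence closed and densely defined, which gives (ii)(a); the inclusion ${\mathcal D}\subset\mathrm{Dom}({\mathcal L}_p)$ together with ${\mathcal L}_p u={\mathcal L}u$ for $u\in{\mathcal D}$ in (ii)(b) is proved by differentiating $u*h_t$ at $t=0$ in $L^p$-norm, using $h_t\in{\mathcal S}(\G)$ to swap derivatives and integrals. For (ii)(c) I would mollify: if $f,{\mathcal L}f\in L^p$ in the distributional sense, then $f_\epsilon=f*\chi_\epsilon$ lies in $\mathrm{Dom}({\mathcal L}_p)$, and hypoelliptic interior estimates allow passage to the limit as $\epsilon\to 0$. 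The main obstacle, and the reason the result is quoted from Hunt, Folland--Stein, and Fischer--Ruzhansky rather than reproved from scratch, is precisely the sub-Riemannian input: establishing Schwartz-class decay of $h_t$ requires Varopoulos-type Gaussian estimates together with derivative bounds, and the maximal-restriction statement (ii)(c) rests on hypoelliptic regularity results lying outside pure $C_0$-semigroup theory.
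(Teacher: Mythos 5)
The paper does not prove this proposition at all: it is quoted verbatim from the cited sources (Hunt's construction of convolution semigroups, Folland's 1975 paper, and Fischer--Ruzhansky), so there is no in-paper argument to compare yours against. Judged on its own terms, your outline follows the same route those references take -- $L^2$ construction by self-adjoint extension, convolution representation from left-invariance, H\"ormander hypoellipticity of $\partial_t+{\mathcal L}$ for smoothness, $\delta_\lambda$-homogeneity for the scaling law, Gaussian bounds for Schwartz decay, and Markovianity plus interpolation for the $L^p$ theory -- and you correctly flag that the genuinely hard inputs (Schwartz-class decay of $h_t$ with all derivatives, and the maximal-restriction statement (ii)(c)) are the reason the result is cited rather than reproved.

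Two points in your plan are stated more casually than they deserve. First, the step ``any bounded operator on $L^2$ commuting with left translations is right convolution with a distribution'' needs the Schwartz kernel theorem together with an argument identifying the kernel as a tempered distribution; this is where Hunt's theorem (constructing $e^{-t{\mathcal L}}$ directly as convolution with a probability measure) does real work, and you should either invoke it or supply the transference argument. Second, you do not address the \emph{uniqueness} asserted in (i): one must show that any semigroup satisfying (a)--(c) has the same generator on the core ${\mathcal D}(\G)$, and that ${\mathcal D}(\G)$ is indeed a core for ${\mathcal L}_p$ -- this is exactly the content of (ii)(c) (smallest closed extension of ${\mathcal L}|_{{\mathcal D}}$), so the uniqueness claim and the core claim have to be established together, not in sequence. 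Neither issue is a wrong turn, but both would need to be filled in before the plan becomes a proof.
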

\noindent
We call the function $h_t(x)$ as the heat kernel associated to ${\mathcal L}$ on $\G$. 
The heat kernel $h_t$ has the following estimate.
\begin{lem}[\cite{Var2}]\label{bbb} 
The heat kernel $h_t$ associated to ${\mathcal L}$ on $\G$ satisfies the following estimate:
For any $\varepsilon>0$, there exists a constant $C_{\varepsilon}>0$ such that 
\begin{align*}
h_t(x)\leq C_{\varepsilon}t^{-\frac{N}{2}}\exp(-\rho^2(x)/4(1+\varepsilon)t).
\end{align*}
for any $g\in {\mathbb G}$ and $t>0$, where $N$ is the homogeneous dimension of $\G$.
\end{lem}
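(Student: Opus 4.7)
The plan is to establish the bound via two classical ingredients: the on-diagonal ultracontractive estimate $\|h_t\|_{\infty} \lesssim t^{-N/2}$ for the sub-Riemannian heat semigroup, and Davies' exponential perturbation argument, which together yield the Gaussian off-diagonal decay with the $(1+\varepsilon)$-loss intrinsic to the method.

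The first step is to reduce to $t=1$ via the scaling identity $h_{r^2 t}(\delta_r x) = r^{-N} h_t(x)$ from Proposition \ref{3-1} together with the $1$-homogeneity $\rho(\delta_r x) = r\rho(x)$: setting $r = t^{-1/2}$ gives $h_t(x) = t^{-N/2} h_1(\delta_{t^{-1/2}} x)$, and the claim at general $(t,x)$ is equivalent to the fixed-time bound $h_1(y) \leq C_\varepsilon \exp(-\rho^2(y)/4(1+\varepsilon))$. For the on-diagonal estimate $h_1(y) \leq C$, I would invoke the Folland--Stein sub-Riemannian Sobolev embedding on $\G$ (valid because $\G$ has homogeneous dimension $N$) to deduce a Nash-type inequality $\|f\|_2^{2+4/N} \lesssim \langle \mathcal{L}f,f\rangle \cdot \|f\|_1^{4/N}$. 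Applied to $u(t) = e^{-t\mathcal{L}}f$, the identity $\frac{d}{dt}\|u\|_2^2 = -2\sum_{i=1}^{d_1}\|X_i u\|_2^2$ combined with the $L^1$-contractivity of the semigroup gives Nash's differential inequality, yielding $\|e^{-t\mathcal{L}}\|_{L^1\to L^2} \lesssim t^{-N/4}$; duality (from $h_t(x^{-1}) = h_t(x)$) and the semigroup property upgrade this to $\|h_t\|_\infty \lesssim t^{-N/2}$.

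For the Gaussian off-diagonal factor I would use Davies' perturbation method. Fix a bounded smooth $\psi$ with horizontal gradient bound $\sum_{i=1}^{d_1}(X_i\psi)^2 \leq 1$, and for $\lambda\in\mathbb{R}$ introduce the twisted semigroup $T_\lambda(t) := e^{\lambda\psi} e^{-t\mathcal{L}} e^{-\lambda\psi}$ on $L^2(\G)$. Its generator differs from $\mathcal{L}$ by first- and zeroth-order terms involving $X_i\psi$, and integration by parts yields the Davies--Gaffney energy estimate $\|T_\lambda(t)\|_{L^2\to L^2} \leq e^{\lambda^2 t}$. Since the kernel of $T_\lambda(t)$ is $K_\lambda(t,x,y) = e^{\lambda\psi(x)}h_t(y^{-1}\cdot x)e^{-\lambda\psi(y)}$, factoring $T_\lambda(t) = T_\lambda(t/2)\, T_\lambda(t/2)$ and combining with the ultracontractive bound (applied to $T_{\pm\lambda}(t/2)$ via $T_\lambda^{*} = T_{-\lambda}$) gives the pointwise kernel bound $|K_\lambda(t,x,y)| \lesssim t^{-N/2} e^{c\lambda^2 t}$, i.e.
$$h_t(y^{-1}\cdot x) \leq C t^{-N/2} e^{-\lambda(\psi(x)-\psi(y))} e^{c\lambda^2 t}.$$
Specializing $y = \bvec{e}$, choosing $\psi$ as a smooth approximation of $\rho(\cdot)$ with horizontal Lipschitz constant at most $1$, and optimizing in $\lambda$ yields the required Gaussian bound; the arbitrariness of $\varepsilon$ absorbs the constant $c$ and the Lipschitz approximation error.

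The principal obstacle is the construction used in the Davies step: one needs a smooth approximation of $\rho$ whose horizontal gradient is uniformly bounded by $1$, so that optimization in $\lambda$ recovers the coefficient $4$ in the exponent up to the arbitrary $\varepsilon$-loss. Since $\rho$ is the Carnot--Carath\'eodory distance associated to $\{X_1,\dots,X_{d_1}\}$ it is horizontally $1$-Lipschitz, but producing a smooth surrogate that preserves this bound requires mollification against a $\G$-bump or the heat kernel itself, and it is precisely this mollification that generates the irreducible $(1+\varepsilon)$ loss in the exponent.
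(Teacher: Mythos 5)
The paper does not prove Lemma \ref{bbb}: it is quoted from \cite{Var2}, so there is no internal argument to compare yours against. Your sketch is the standard route to such Gaussian upper bounds --- and essentially the one carried out in the cited reference --- namely the scaling reduction via $h_{r^2t}(\delta_r(x))=r^{-N}h_t(x)$ and the $1$-homogeneity of $\rho$, an on-diagonal bound $\|h_t\|_{\infty}\lesssim t^{-N/2}$ from a Nash-type inequality, and Davies' exponential perturbation for the off-diagonal Gaussian factor with the $(1+\varepsilon)$ loss. Two remarks. First, within this paper the on-diagonal bound is already immediate: the scaling identity reduces it to $\|h_1\|_{\infty}<\infty$, which follows from $h_1\in{\mathcal S}(\G)$ in Proposition \ref{3-1}, so the Nash iteration, while correct and self-contained, is not needed for that half. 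Second, and more substantively, the step where you apply the ultracontractive bound to $T_{\pm\lambda}(t/2)$ glosses over the one genuinely delicate point of Davies' method: the $L^1\to L^2$ smoothing of the twisted semigroup $T_\lambda$ is \emph{not} a formal consequence of the smoothing of $e^{-t{\mathcal L}}$ combined with the $L^2$ energy estimate, because conjugation by $e^{\lambda\psi}$ destroys the $L^1$-contractivity on which that composition would rely. One must rerun the Nash (or logarithmic Sobolev) iteration for the perturbed quadratic form, and it is this step --- at least as much as the mollification of the Carnot--Carath\'eodory distance --- that produces the constant $c>1$ in $e^{c\lambda^2 t}$ and hence the $4(1+\varepsilon)$ rather than $4$ in the exponent. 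With that step supplied, as it is in \cite{Var2}, your outline is a complete and correct proof.
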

\noindent
By Young's inequality and Lemma \ref{bbb}, we have the following $L^{\alpha}-L^{\beta}$ estimate.
\begin{prop}[$L^{\alpha}-L^{\beta}$ estimate]\label{17244}
Assume $1\leq \alpha\leq\beta\leq \infty$. Then there exists a positive constant $C_{p,q}$ such that
\[\|e^{-t{\mathcal L}}\varphi\|_{{\beta}}\leq Ct^{-\frac{N}{2}(\frac{1}{\alpha}-\frac{1}{\beta})}\|\varphi\|_{{\alpha}},\ t>0,\]
for any $\varphi\in L^{\alpha}(\G)$, where $N$ is the homogeneous dimension of $\G$. 
\end{prop}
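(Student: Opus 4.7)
The plan is to reduce the estimate to Young's convolution inequality on $\G$ together with a computation of $\|h_t\|_r$ using the scaling property of the heat kernel. Since $e^{-t{\mathcal L}}\varphi = \varphi * h_t$ by Proposition \ref{3-1}(i)(a), Young's inequality on the unimodular locally compact group $\G$ gives, for $1\leq\alpha\leq\beta\leq\infty$ and $r\in[1,\infty]$ determined by $1+1/\beta = 1/\alpha + 1/r$,
\[
\|e^{-t{\mathcal L}}\varphi\|_\beta = \|\varphi * h_t\|_\beta \leq \|\varphi\|_\alpha \|h_t\|_r.
\]
So the whole proof reduces to showing $\|h_t\|_r \lesssim t^{-\frac{N}{2}(1-1/r)}$, because $1-1/r = 1/\alpha - 1/\beta$.

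First I would establish $\|h_1\|_r<\infty$ for every $r\in[1,\infty]$. This is immediate from Proposition \ref{3-1}(i)(a), which says $h_t\in\mathcal{S}(\G)$, or alternatively from Lemma \ref{bbb} together with the fact that $\int_\G \exp(-c\rho^2(x))\,dx<\infty$ (use the polar decomposition on $\G$ recalled earlier in the section, noting that $\rho\sim|\cdot|_\G$ is homogeneous of degree one, so the integral converges like a Gaussian on $\mathbb{R}^N$).

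Next I would use the scaling identity $h_{\lambda^2 t}(\delta_\lambda x) = \lambda^{-N} h_t(x)$ from Proposition \ref{3-1}(i)(b). Setting $t=1$, $\lambda = t^{1/2}$ (with a slight abuse of notation), one gets
\[
h_t(x) = t^{-N/2}\, h_1(\delta_{t^{-1/2}}(x)).
\]
Changing variables $y=\delta_{t^{-1/2}}(x)$, and using $dy = t^{-N/2}\,dx$ (i.e.\ the Jacobian of $\delta_\lambda$ is $\lambda^N$ from the relation $\int_\G f(\delta_\lambda x)\,dx = \lambda^{-N}\int_\G f\,dx$ recalled earlier), I compute
\[
\|h_t\|_r^r = t^{-Nr/2}\int_\G h_1(\delta_{t^{-1/2}}x)^r\,dx = t^{-Nr/2}\cdot t^{N/2} \|h_1\|_r^r = t^{-\frac{N}{2}(r-1)}\|h_1\|_r^r.
\]
Taking the $r$-th root yields $\|h_t\|_r = t^{-\frac{N}{2}(1/\alpha - 1/\beta)}\|h_1\|_r$, and combining with Young's inequality gives the desired bound with $C = \|h_1\|_r$.

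The only mild subtlety is handling the endpoint $r=\infty$ (when $\alpha=\beta$, the inequality is trivial from contractivity, Proposition \ref{3-1}(i)(c); otherwise if $\alpha=1,\beta=\infty$ we need $\|h_t\|_\infty\lesssim t^{-N/2}$, which is exactly Lemma \ref{bbb} with $x=\bvec{e}$, or the scaling identity above). Apart from that, the argument is routine and no significant obstacle is expected.
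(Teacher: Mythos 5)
Your proof is correct and follows essentially the same route as the paper: Young's inequality on $\G$ reduces everything to the bound $\|h_t\|_r\lesssim t^{-\frac{N}{2}(1-\frac{1}{r})}$ with $1+\frac{1}{\beta}=\frac{1}{\alpha}+\frac{1}{r}$. The only immaterial difference is that you obtain this from the exact self-similarity $h_t(x)=t^{-N/2}h_1(\delta_{t^{-1/2}}(x))$ together with $h_1\in{\mathcal S}(\G)$, whereas the paper substitutes the Gaussian upper bound of Lemma \ref{bbb} and rescales the resulting Gaussian integral.
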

\begin{proof}
Let $\varphi\in L^{\alpha}(\G)$. By Young's inequalty, we have
\begin{align*}
\|e^{-t{\mathcal L}}\varphi\|_{{\beta}} &\leq \|\varphi\|_{{\alpha}}\|h_t\|_{{\nu}},\ 1+\disp\frac{1}{\beta}=\disp\frac{1}{\alpha}+\disp\frac{1}{\nu}. 
\end{align*}
By Lemma \ref{bbb}, we have
\begin{align*}
\|h_t\|_{\nu} &\leq Ct^{-\frac{N}{2}}\disp\left(\int_{\G}e^{-\frac{c\nu\rho(g)^2}{t}}~dg\right)^{\frac{1}{\nu}}\\
                &= Ct^{-\frac{N}{2}}\disp\left(\frac{t}{c\nu}\right)^{\frac{N}{2\nu}}\disp\left(\int_{\G}e^{-{\rho(g)^2}}~dg\right)^{\frac{1}{\nu}}\\
                &= Ct^{-\frac{N}{2}(1-\frac{1}{\nu})}.
\end{align*}
Hence we have
\begin{align*}
\|e^{-t{\mathcal L}}\varphi\|_{{\beta}}\leq Ct^{-\frac{N}{2}(\frac{1}{\alpha}-\frac{1}{\beta})}\|\varphi\|_{{\alpha}}.
\end{align*} 
\end{proof}

\subsection{Hardy-Littlewood maximal function} 
\noindent

We recall the Hardy-Littlewood maximal function on ${\mathbb G}$ (in detail, see \cite{Fol}). For $f\in L^1_{\rm{loc}}(\G)$, we define a maximal function $Mf$ by
\begin{align}
M[f](x)=\disp\sup_{r>0}\disp\frac{1}{|B(x,r)|}\disp\int_{B(x,r)}|f(x^{\prime})|~dx^{\prime},\ x\in\G,
\end{align} 
where $B(x,r)=\{x^{\prime}\in{\mathbb G}\ |\ |{x^{\prime}}^{-1}\cdot x|_{\G}\leq r\}$ and $|B(x,r)|$ denotes Lebesgue measure of $B(x,r)$. As a remark, we have 
\[|B(x,r)|=|B(0,r)|=|B(0,1)|r^N\]
(see \cite{Fol}).
The maximal operator $M$ has the following properties.
\begin{prop}[\cite{Fol}, Theorem 2.4]\label{2_21_20}
Let $1<p\leq \infty$. Then the operator $M$ is bounded on $L^p(\G)$.
\end{prop}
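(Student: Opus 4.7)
The plan is to prove this via the standard Marcinkiewicz interpolation scheme, using the fact that $(\G, |\cdot|_\G, dx)$ forms a space of homogeneous type. The endpoint $p = \infty$ is immediate: from the definition of $M$, one has $M[f](x) \leq \|f\|_\infty$ for a.e.\ $x \in \G$, so $M$ is bounded on $L^\infty(\G)$ with operator norm at most one. For $1 < p < \infty$, I would interpolate this trivial bound with a weak-type $(1,1)$ estimate for $M$.

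For the weak-type $(1,1)$ estimate, fix $\lambda > 0$ and $f \in L^1(\G)$, and let $E_\lambda = \{x \in \G : M[f](x) > \lambda\}$. For each $x \in E_\lambda$ there exists a radius $r_x > 0$ with
\[
\frac{1}{|B(x, r_x)|} \int_{B(x, r_x)} |f(x')|\, dx' > \lambda,
\]
which forces $|B(x, r_x)| < \lambda^{-1} \|f\|_1$, so the radii are uniformly bounded. A Vitali-type covering lemma then extracts from $\{B(x, r_x)\}_{x \in E_\lambda}$ a countable disjoint subcollection $\{B(x_k, r_k)\}$ whose fixed dilates $\{B(x_k, c r_k)\}$ still cover $E_\lambda$, for a constant $c$ depending only on the quasi-triangle inequality for $|\cdot|_\G$. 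Using the scaling $|B(x, cr)| = c^N |B(x, r)|$, which follows from the $\delta_\lambda$-homogeneity of degree one of $|\cdot|_\G$, one obtains
\[
|E_\lambda| \leq \sum_k |B(x_k, c r_k)| = c^N \sum_k |B(x_k, r_k)| \leq \frac{c^N}{\lambda} \sum_k \int_{B(x_k, r_k)} |f(x')|\, dx' \leq \frac{c^N}{\lambda} \|f\|_1,
\]
by disjointness of the selected balls. Applying the Marcinkiewicz interpolation theorem between this weak-$(1,1)$ estimate and the $L^\infty$ bound then yields $\|M[f]\|_p \leq C_p \|f\|_p$ for all $1 < p < \infty$.

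The main technical obstacle is the Vitali covering lemma. On Euclidean space one uses the exact triangle inequality, whereas on $\G$ the balls are defined via the homogeneous norm $|\cdot|_\G$, which satisfies only a quasi-triangle inequality. One therefore needs to verify: if a disjoint subfamily is chosen greedily by picking, at each stage, a ball of largest radius among those disjoint from previously chosen ones, then suitable dilates of the selected balls cover the union of the original family. This relies only on the doubling property of Haar measure on $\G$ together with the quasi-triangle inequality, and is standard for spaces of homogeneous type in the sense of Coifman--Weiss; once this combinatorial step is checked, the argument above proceeds routinely.
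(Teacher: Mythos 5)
Your argument is correct and is essentially the proof the paper relies on: the paper does not prove Proposition \ref{2_21_20} itself but quotes it from \cite{Fol} (Theorem 2.4), whose proof is exactly this combination of the trivial $L^\infty$ bound, a Vitali-type covering lemma for the quasi-metric balls of the homogeneous norm, the resulting weak-$(1,1)$ estimate, and Marcinkiewicz interpolation. The one point you rightly flag --- the greedy selection for an infinite family, where one picks a ball of radius at least half the supremum of the remaining radii rather than a literal maximum, and estimates $|K|$ for compact $K\subset E_\lambda$ to avoid measurability/finiteness issues --- is standard for spaces of homogeneous type and goes through here because $|\cdot|_\G$ is (equivalent to) a symmetric left-invariant quasi-norm and $|B(x,r)|=|B(0,1)|r^N$ gives doubling.
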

\noindent
We also refer to \cite{Ste2} for Proposition \ref{2_21_20}.
\begin{prop}[\cite{Fef}]\label{2_21_21}
For all $\{h_k\}$ and all $1<q<\infty$, we have
\[\disp\left\|\left(\disp\sum(M[h_k])^2\right)^{1/2}\right\|_q\leq C_q \disp\left\|\left(\disp\sum|h_k|^2\right)^{1/2}\right\|_q.\]
\end{prop}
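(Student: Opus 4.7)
The plan is to adapt Fefferman--Stein's original argument \cite{Fef}, using only that $\G$ is a space of homogeneous type: the identity $|B(x,r)|=|B(0,1)|r^N$ together with left invariance of Haar measure gives the doubling property $|B(x,2r)|=2^N|B(x,r)|$, so Calder\'on--Zygmund and weighted-norm technology from $\mathbb{R}^n$ transfers verbatim. I would split into three ranges of $q$.

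For $q=2$ the bound is immediate from Fubini and Proposition~\ref{2_21_20}:
\[
\left\|\left(\sum_k (M[h_k])^2\right)^{1/2}\right\|_2^2 = \sum_k\|M[h_k]\|_2^2 \leq C\sum_k\|h_k\|_2^2 = C\left\|\left(\sum_k|h_k|^2\right)^{1/2}\right\|_2^2.
\]
For $1<q<2$, observe that pointwise $\sup_k M[h_k]\leq M[\sup_k|h_k|]$, so by Proposition~\ref{2_21_20} the vector-valued sublinear operator $T:\{h_k\}\mapsto\{M[h_k]\}$ is bounded on $L^q(\G;\ell^\infty)$; by Fubini it is also bounded on $L^q(\G;\ell^q)$. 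Interpolating between these two endpoints (linearize $M$ via a measurable selection of radii, apply Riesz--Thorin, then pass back to the supremum) yields boundedness on $L^q(\G;\ell^r)$ for every $r\in[q,\infty]$, in particular for $r=2$.

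For $q>2$ I would argue by duality, using the Fefferman--Stein weighted maximal inequality
\[
\int_\G (M f)^2\,w\,dx \leq C \int_\G |f|^2\,Mw\,dx, \qquad w\geq 0,
\]
which holds on any space of homogeneous type with the same proof as on $\mathbb{R}^n$ (a good-$\lambda$ argument built from the weak-type $(1,1)$ of $M$ plus doubling). Writing $\|(\sum_k(M[h_k])^2)^{1/2}\|_q^2=\|\sum_k(M[h_k])^2\|_{q/2}$ and dualizing against a nonnegative $\phi$ with $\|\phi\|_{(q/2)'}\leq 1$, the weighted inequality applied to each $h_k$ and summed yields
\[
\int_\G\sum_k(M[h_k])^2\,\phi\,dx \leq C\int_\G\left(\sum_k|h_k|^2\right) M\phi\,dx \leq C\left\|\sum_k|h_k|^2\right\|_{q/2}\|M\phi\|_{(q/2)'},
\]
and the scalar bound $\|M\phi\|_{(q/2)'}\lesssim\|\phi\|_{(q/2)'}\leq 1$ (Proposition~\ref{2_21_20}, applicable since $(q/2)'>1$) closes the estimate.

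The main obstacle is the range $q>2$: the interpolation used for $q\leq 2$ does not reach $\ell^2$ when $q>2$ (the endpoint $L^q(\ell^q)$ sits on the wrong side), and the duality step above relies on the weighted inequality, which, although standard in harmonic analysis on spaces of homogeneous type, requires its own Calder\'on--Zygmund decomposition rather than being a direct consequence of Proposition~\ref{2_21_20}. By contrast, the cases $q=2$ and $1<q<2$ are elementary once the scalar bound and the pointwise $\ell^\infty$ estimate $\sup_kM[h_k]\leq M[\sup_k|h_k|]$ are in hand.
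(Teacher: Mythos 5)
The paper offers no proof of this proposition at all: it is imported verbatim from Fefferman--Stein \cite{Fef}, with the (implicit) understanding that the argument transfers to $\G$ because $(\G,|\cdot|_{\G},dx)$ is a space of homogeneous type with $|B(x,r)|=|B(0,1)|r^N$. Your sketch is a correct reconstruction of exactly that cited argument --- $q=2$ by Fubini and Proposition \ref{2_21_20}, $1<q<2$ by linearizing $M$ and interpolating in the $\ell^r$ index between $\ell^q$ and $\ell^\infty$ (using $\sup_k M[h_k]\le M[\sup_k|h_k|]$), and $q>2$ by duality through the weighted inequality $\int_{\G}(Mf)^2w\,dx\le C\int_{\G}|f|^2Mw\,dx$ --- and every ingredient you invoke (Vitali covering, doubling, weak type $(1,1)$ of $M$) is indeed available on a stratified group, so there is no gap; you have simply supplied the proof the paper delegates to the reference.
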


\section{Sobolev spaces on $\G$}
\noindent

Here we recall the definition and basic properties of Sobolev spaces on the stratified Lie group $\G$ first introduced in \cite{Folland2} by G. B. Folland (see also \cite{Saka}). In \cite{Fis}, V. Fischer and M. Ruzhansky gave the definition and basic properties of Sobolev spaces on graded groups, which are more general than the stratified Lie group. So we refer to \cite{Fis} and \cite{Folland2}. Especially, we adopt the definition of the Sobolev spaces in \cite{Fis}.

At first, we recall the definition of fractional powers of the sublaplacian ${\mathcal L}$.
\begin{ttt}[\cite{Fis}, \cite{Folland2}, \cite{Saka}]{\rm
Assume that $1<p<\infty$, $s>0$ and $k=[s]+1$. Then the operator ${\mathcal L}^{s}_p$ is defined by 
\begin{align}
{\mathcal L}^{s}_pf =\disp\lim_{\varepsilon\rightarrow 0}\disp\frac{1}{\Gamma(k-s)}\disp\int_{\varepsilon}^{\infty}\nu^{k-s-1}{\mathcal L}^ke^{-\nu{\mathcal L}}f~d \nu
\end{align}
on the domain of all $f\in L^p(\G)$ such that the indicated limit exists in $L^p(\G)$. The operator ${\mathcal L}^{-s}_p$ is defined by 
\begin{align}
{\mathcal L}^{-s}_pf=\disp\lim_{\eta\rightarrow \infty}\disp\frac{1}{\Gamma(s)}\disp\int_0^{\eta}\nu^{s-1}e^{-\nu{\mathcal L}}f~d\nu
\end{align}
on the domain of all $f\in L^p(\G)$ such that the indicated limit exists in $L^p(\G)$. The operator $(\mathrm{Id}+{\mathcal L}_p)^{s}$ is defined by
\begin{align}
(\mathrm{Id}+{\mathcal L}_p)^{s}f = \disp\lim_{\varepsilon\rightarrow 0}\disp\frac{1}{\Gamma(k-s)}\disp\int_{\varepsilon}^{\infty}\nu^{k-s-1}(\mathrm{Id}+{\mathcal L})^ke^{-\nu}e^{-\nu{\mathcal L}}f~d\nu
\end{align}
on the domain of all $f\in L^p(\G)$ such that the indicated limit exists in $L^p(\G)$. Also, we define the operator $(\mathrm{Id}+{\mathcal L}_p)^{-s}$ by 
\begin{align}
(\mathrm{Id}+{\mathcal L}_p)^{-s}f=\disp\frac{1}{\Gamma(s)}\disp\int_0^{\infty}\nu^{s-1}e^{-\nu}e^{-\nu{\mathcal L}}f~d\nu.
\end{align}
The operator $(\mathrm{Id}+{\mathcal L}_p)^{-s}$ is a bounded operator on $L^p$. 
}
\end{ttt}
\noindent
\begin{prop}[\cite{Fis}, \cite{Folland2}]\label{3_15_1}
Let $1<p<\infty$ and ${\mathcal M}_p$ denote either ${\mathcal L}_p$ or $\mathrm{Id}+{\mathcal L}_p$. Then, 
\begin{enumerate}
\item[\rm{(i)}] ${\mathcal M}_p^s$ is a closed operator on $L^p(\G)$ for all  $s\in{\mathbb R}$ and injective with $({\mathcal M}^s_p)^{-1}={\mathcal M}^{-s}_p$.
\item[\rm{(ii)}] If $f\in \mathrm{Dom}({\mathcal M}^{\beta}_p)\cap\mathrm{Dom}({\mathcal M}^{\alpha+\beta}_p)$, then ${\mathcal M}^{\beta}_pf\in\mathrm{Dom}({\mathcal M}^{\alpha}_p)$ and ${\mathcal M}^{\alpha}_p{\mathcal M}^{\beta}_pf={\mathcal M}^{\alpha+\beta}_pf$. ${\mathcal M}^{\alpha+\beta}_p$ becomes the smallest closed extension of ${\mathcal M}^{\alpha}_p{\mathcal M}^{\beta}_p$. 
\item[\rm{(iii)}] When $s>0$, if $f\in \mathrm{Dom}~({\mathcal M}^{s}_p)\cap L^q(\G)$, then $f\in \mathrm{Dom}~({\mathcal M}^{s}_q)$ if and only if ${\mathcal M}^{s}_pf\in L^q(\G)$, in which case ${\mathcal M}^{s}_p={\mathcal M}^{s}_q$. 
\item[\rm{(iv)}] If $s>0$, then $\mathrm{Dom}({\mathcal L}^s_p)=\mathrm{Dom}((\mathrm{Id}+{\mathcal L}_p)^s)$ .
\end{enumerate}
\end{prop}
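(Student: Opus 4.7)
The plan is to treat $\mathcal{M}_p$ uniformly and reduce each item to manipulations of the defining Balakrishnan-type integrals together with the semigroup property of $\{e^{-t\mathcal{M}}\}_{t>0}$. The underlying facts are essentially those of the classical Komatsu--Balakrishnan theory of fractional powers of non-negative generators on Banach spaces, adapted via Proposition~\ref{3-1} to the setting of $\mathcal{L}_p$; a detailed treatment for graded groups is given in Fischer--Ruzhansky and for stratified groups in Folland, which I would cite, keeping my own argument at the level of a verification.

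For (i), I would first treat $s>0$. The formula defining $\mathcal{M}_p^s$ converges on a dense subspace (for instance $\mathrm{Dom}(\mathcal{M}_p^k)$), and its closability is standard: if $f_n\to 0$ and $\mathcal{M}_p^s f_n \to g$, one composes with the bounded inverse $(\mathrm{Id}+\mathcal{M}_p)^{-s}$ (whose existence follows from the convergent integral in the fourth display of the definition, since $e^{-t\mathcal{M}_p}$ is a contraction semigroup) and concludes $g=0$. The relation $(\mathcal{M}_p^s)^{-1}=\mathcal{M}_p^{-s}$ and injectivity are obtained by writing the composition of the two defining integrals, applying Fubini, and collapsing the resulting inner integral using the Beta-function identity $\int_0^\infty u^{s-1}(u+\nu)^{-(s+k-s)}\,du = \Gamma(s)\Gamma(k-s)/\Gamma(k)$; for the full axis $s\in\mathbb{R}$ one combines $s>0$ with the bounded case $s<0$.

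For (ii), I would again combine the two integrals defining $\mathcal{M}_p^\alpha$ and $\mathcal{M}_p^\beta$, apply Fubini, and recognise the result as the integral defining $\mathcal{M}_p^{\alpha+\beta}$; the smallest-closed-extension statement is equivalent to showing $\mathrm{Dom}(\mathcal{M}_p^\alpha\mathcal{M}_p^\beta)$ is a core for $\mathcal{M}_p^{\alpha+\beta}$, which follows by approximating $f\in\mathrm{Dom}(\mathcal{M}_p^{\alpha+\beta})$ by $e^{-\eta\mathcal{M}_p}f$, for which both sides make sense and agree by Proposition~\ref{3-1}(i)(c). For (iii), the point is that $e^{-t\mathcal{L}}f = f*h_t$ with the same kernel $h_t\in\mathcal{S}(\mathbb{G})$ irrespective of the ambient $L^p$-space, so the integrand in the definition of $\mathcal{M}_p^s f$ is literally the same as that in $\mathcal{M}_q^s f$ for $f\in L^p\cap L^q$; the convergence in either norm is therefore a question about the same integral, giving the stated equivalence. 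For (iv), when $\mathcal{M}_p = \mathrm{Id}+\mathcal{L}_p$ the extra $e^{-\nu}$ factor makes the integral near $\nu=\infty$ converge unconditionally, so the only obstruction to equating the two domains is the behaviour of $\mathcal{L}_p^k e^{-\nu\mathcal{L}_p}f$ for $\nu$ small. I would expand $(\mathrm{Id}+\mathcal{L}_p)^k = \sum_{j=0}^k\binom{k}{j}\mathcal{L}_p^j$ and match term by term against the $\mathcal{L}_p^s$ integral, using (ii) and the boundedness of lower-order fractional powers to absorb the discrepancy.

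The main obstacle is (iv), since the two integral formulas differ both in the polynomial correction $(\mathrm{Id}+\mathcal{L})^k$ vs.\ $\mathcal{L}^k$ and in the exponential weight $e^{-\nu}$. Getting the domains to coincide requires showing that the difference $(\mathrm{Id}+\mathcal{L}_p)^s f - \mathcal{L}_p^s f$ lies in $L^p$ whenever either side does, which I expect to handle by a Mellin/spectral comparison argument as above; the other parts are essentially bookkeeping with Fubini and the Beta function.
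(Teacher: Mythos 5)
The paper does not actually prove this proposition: it is quoted as background, with the proof delegated to Folland's 1975 Arkiv paper and Fischer--Ruzhansky's book, so there is no in-paper argument to compare yours against. As a reconstruction of what those references do, your plan points in the right direction: everything does rest on the Komatsu--Balakrishnan calculus for fractional powers of the non-negative generator of the contraction semigroup $e^{-t{\mathcal L}}$, together with Fubini and Beta-function manipulations of the defining integrals, and your treatment of (ii) and (iii) (semigroup composition; the observation that $e^{-t{\mathcal L}}f=f*h_t$ with the same kernel on every $L^p$, so consistency across $p$ and $q$ is a statement about one and the same integral) matches the standard route.

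Two places where the sketch would not survive being expanded. First, in (i) you propose to get closedness of ${\mathcal M}_p^s$ by composing with ``the bounded inverse $(\mathrm{Id}+{\mathcal M}_p)^{-s}$''. When ${\mathcal M}_p={\mathcal L}_p$ that operator is bounded but it is \emph{not} the inverse of ${\mathcal L}_p^s$, and knowing that $(\mathrm{Id}+{\mathcal L}_p)^{-s}{\mathcal L}_p^s$ is bounded on its domain is essentially the content of part (iv), so the argument is circular as written; moreover the statement asserts closedness (not mere closability) of the operator on the maximal domain where the limit exists, and identifying that operator with the closure of the Balakrishnan operator on $\mathrm{Dom}({\mathcal L}_p^k)$ is a separate step you have skipped. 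Second, for (iv) the binomial expansion of $(\mathrm{Id}+{\mathcal L})^k$ only rearranges the integrand; the real work in the cited proof is to show that the two graph norms are comparable, i.e.\ $\|{\mathcal L}^s f\|_p\lesssim\|(\mathrm{Id}+{\mathcal L})^s f\|_p$ and $\|(\mathrm{Id}+{\mathcal L})^s f\|_p\lesssim\|f\|_p+\|{\mathcal L}^s f\|_p$, which requires genuine operator bounds obtained from the heat-kernel decay (Lemma \ref{bbb}) to control the small-$\nu$ and large-$\nu$ regimes of the integrals separately, not term-by-term matching. You correctly flag (iv) as the main obstacle, but the ``Mellin/spectral comparison'' you gesture at is precisely where the estimates live. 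Since the proposition is cited rather than proved in the paper, the appropriate resolution is to keep the citation rather than attempt a self-contained argument at this level of detail.
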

\noindent
By Proposition \ref{3_15_1} (iii), ${\mathcal L}^s_p$ (resp. $(\mathrm{Id}+{\mathcal L}_p)^s$) agrees with ${\mathcal L}^s_q$ (resp. $(\mathrm{Id}+{\mathcal L}_q)^s$) on their common domains for $s\in{\mathbb R}$ and $1<p,q<\infty$. So we omit the subscripts on these operators except when we wish to specify domains.

Next, we recall the definition of Sobolev spaces $L^p_s(\G)$ and $\dot{L}^p_s(\G)$. We adopt the definition in \cite{Fis} (see also \cite{Folland2}).
\begin{ttt}[\cite{Fis}]\label{3_1_1}\rm 
\begin{enumerate}
\item (the inhomogeneous Sobolev space) Let $s\in{\mathbb R}$ and $1<p<\infty$. Then  we denote by $L^p_s(\G)$ the space 
of tempered distributions obtained by the completion of the Schwartz class ${\mathcal S}(\G)$ with respect to the Sobolev norm 
\[\|f\|_{L^p_s}:=\|(\mathrm{Id}+{\mathcal L})^{\frac{s}{2}}f\|_p\]
for $f\in{\mathcal S}(\G)$. 
\item (the homogeneous Sobolev space) Let $s\in{\mathbb R}$ and $1<p<\infty$. Then  we denote by $\dot{L}^p_s(\G)$ the space 
of tempered distributions obtained by the completion of ${\mathcal S}(\G)\cap\mathrm{Dom}~({\mathcal L}^{\frac{s}{2}})$ with respect to the norm 
\[\|f\|_{\dot{L}^p_s}:=\|{\mathcal L}^{\frac{s}{2}}f\|_p\]
for $f\in{\mathcal S}(\G)\cap\mathrm{Dom}~({\mathcal L}^{\frac{s}{2}}_p)$, especially, for $f\in{\mathcal S}(\G)$ if $s>0$. 

\end{enumerate}
\end{ttt}
\noindent

The Sobolev spaces $L^p_s(\G)$ and $\dot{L^p_s}(\G)$ have the following basic properties.
\begin{prop}[\cite{Fis}]\label{3_31_1}
\begin{enumerate}
\item Let $s\in {\mathbb R}$ and $1<p<\infty$. Then $L^p_s(\G)$ and $\dot{L^p_s}(\G)$ are Banach space satisfying
\[{\mathcal S}(\G)\subsetneq L^p_s(\G)\subset{\mathcal S}^{\prime}(\G)\]
and 
\[({\mathcal S}(\G)\cap\mathrm{Dom}~({\mathcal L}^{\frac{s}{2}}_p))\subsetneq \dot{L}^p_s(\G)\subsetneq{\mathcal S}^{\prime}(\G),\]
 respectively. 
\item If $s=0$ and $1<p<\infty$, then $\dot{L}^p_0(\G)=L^p_0(\G)=L^p(\G)$ with $\|\cdot\|_{\dot{L}^p_0}=\|\cdot\|_{L^p_0}=\|\cdot\|_p$.
\item if $s>0$ and $1<p<\infty$, then we have 
\[L^p_s(\G)=\dot{L}^p_s(\G)\cap L^p(\G)\]
and 
\[\|\cdot\|_{L^p_s}\sim \|\cdot\|_p+\|\cdot\|_{\dot{L}^p_s}.\]
 \end{enumerate}
\end{prop}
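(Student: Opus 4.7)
The plan is to extract each statement directly from the integral formulas in Definition \ref{3_1_1} combined with Proposition \ref{3_15_1}, without developing any new machinery. Claim (1) is essentially formal: both $L^p_s(\G)$ and $\dot L^p_s(\G)$ are completions of normed spaces, hence Banach by construction, and the density of ${\mathcal S}(\G)$ (respectively ${\mathcal S}(\G)\cap\mathrm{Dom}({\mathcal L}^{s/2}_p)$) inside them is built into the definition. To realize these Banach spaces inside ${\mathcal S}^{\prime}(\G)$, I would extend the pairing $\langle f,\varphi\rangle$ from ${\mathcal S}\times{\mathcal S}$ using
\[
|\langle f,\varphi\rangle|\le \|(\mathrm{Id}+{\mathcal L})^{s/2}f\|_p\,\|(\mathrm{Id}+{\mathcal L})^{-s/2}\varphi\|_{p'}=\|f\|_{L^p_s}\|(\mathrm{Id}+{\mathcal L})^{-s/2}\varphi\|_{p'},
\]
the last factor being finite because the integral representation of $(\mathrm{Id}+{\mathcal L})^{-s/2}$ converges absolutely on Schwartz inputs thanks to the decay in $\nu$. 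The analogous argument with ${\mathcal L}^{-s/2}$ applied to test functions in ${\mathcal S}(\G)\cap\mathrm{Dom}({\mathcal L}^{-s/2}_{p'})$ gives the homogeneous embedding. Strictness is witnessed by $L^p$-functions that are not Schwartz and by tempered distributions with infinite Sobolev norm.

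For (2), substituting $s=0$ into Definition \ref{3_1_1} makes $(\mathrm{Id}+{\mathcal L})^{0}$ and ${\mathcal L}^{0}$ the identity on ${\mathcal S}(\G)$; the two Sobolev norms then reduce to $\|\cdot\|_p$, and the claimed identifications are precisely the density of ${\mathcal S}(\G)$ in $L^p(\G)$. For (3), assume $s>0$. Since $(\mathrm{Id}+{\mathcal L})^{-s/2}$ is $L^p$-bounded (last line of Definition \ref{3_1_1}), writing $f=(\mathrm{Id}+{\mathcal L})^{-s/2}(\mathrm{Id}+{\mathcal L})^{s/2}f$ gives $\|f\|_p\lesssim\|f\|_{L^p_s}$ at once. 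The equivalence then reduces to the two-sided bound
\[
\|{\mathcal L}^{s/2}f\|_p\lesssim \|(\mathrm{Id}+{\mathcal L})^{s/2}f\|_p\lesssim \|f\|_p+\|{\mathcal L}^{s/2}f\|_p
\]
on ${\mathcal S}(\G)\cap\mathrm{Dom}({\mathcal L}^{s/2}_p)$, which is a dense subspace of $L^p_s(\G)\cap\dot L^p_s(\G)$ by Proposition \ref{3_15_1}\,(iv). I would establish it by recognizing the composed operators ${\mathcal L}^{s/2}(\mathrm{Id}+{\mathcal L})^{-s/2}$ and $(\mathrm{Id}+{\mathcal L})^{s/2}(\mathrm{Id}+{\mathcal L}^{s/2})^{-1}$ as convolution with kernels produced by the integral formulas of Definition \ref{3_1_1}; dominating those kernels pointwise via the Gaussian estimate of Lemma \ref{bbb} and applying Young's inequality yields the required $L^p\to L^p$ bounds. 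The identification $L^p_s(\G)=\dot L^p_s(\G)\cap L^p(\G)$ then follows by passing to the completion, the inclusion $\subset$ being immediate from the two norm bounds just established and the reverse one from Proposition \ref{3_15_1}\,(iv) together with the definition of the inhomogeneous norm.

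The step I expect to be the main obstacle is the $L^p$-boundedness of the ``mixing'' operators ${\mathcal L}^{s/2}(\mathrm{Id}+{\mathcal L})^{-s/2}$ and $(\mathrm{Id}+{\mathcal L})^{s/2}(\mathrm{Id}+{\mathcal L}^{s/2})^{-1}$. On ${\mathbb R}^n$ this is a one-line Mikhlin multiplier argument, but on a general stratified group, Fourier multiplier techniques are unavailable in step $r\ge 2$; one must work entirely through the heat semigroup, combining the subordination-type formulas of Definition \ref{3_1_1} with the Gaussian pointwise estimate of Lemma \ref{bbb} to control the convolution kernels. Once these boundedness assertions are in hand, the remainder of the argument is routine completion and density bookkeeping.
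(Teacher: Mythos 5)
The paper offers no proof of this proposition: it is imported verbatim from Fischer--Ruzhansky \cite{Fis} (see also Folland \cite{Folland2}), so there is no internal argument to compare yours against. Judged on its own, your outline is the standard route (it is essentially Folland's original strategy), but the step you yourself flag as the main obstacle is also the step where your proposed method, as described, would fail.

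Concretely, in part (3) everything reduces to the $L^p$-boundedness of $\mathcal{L}^{s/2}(\mathrm{Id}+\mathcal{L})^{-s/2}$ and of an inverse ``mixing'' operator. You propose to dominate their convolution kernels pointwise by the Gaussian bound of Lemma \ref{bbb} and apply Young's inequality. But $\mathcal{L}^{s/2}(\mathrm{Id}+\mathcal{L})^{-s/2}$ is not convolution with an integrable kernel: it is the identity plus a convolution operator, and even exhibiting that decomposition (Folland's Lemma 3.15 / Theorem 4.4 in \cite{Folland2}) requires a genuinely nontrivial manipulation of the subordination integrals before any kernel estimate can be applied. A naive pointwise Gaussian domination of ``the kernel'' would have to absorb a delta-type singular part and cannot close. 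Likewise, your second operator involves $(\mathrm{Id}+\mathcal{L}^{s/2})^{-1}$, whose very invertibility on $L^p$ is part of what has to be proved, not an input. Two further gaps: for $s<0$ the embedding of $\dot L^p_s(\G)$ into $\mathcal{S}'(\G)$ requires realizing the completion modulo polynomials (this is why the Triebel--Lizorkin description in Definition \ref{7-1-1} quotients by $\mathcal{P}$), which your duality pairing does not address; and identifying the abstract completion $L^p_s(\G)$ with $\dot L^p_s(\G)\cap L^p(\G)$ as subsets of $\mathcal{S}'$ needs the closedness of $\mathcal{M}^s_p$ from Proposition \ref{3_15_1}(i), not just density bookkeeping. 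Parts (1) (for $s\ge 0$) and (2) are fine. The honest conclusion is that your sketch reproduces the architecture of the known proof but leaves its hardest component --- the $L^p$ comparison of homogeneous and inhomogeneous powers --- resting on a technique that does not apply in the form stated; the paper sidesteps all of this by citation.
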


We summarize the useful properties of the Sobolev spaces on the stratified Lie groups $\G$ as follows.
At first, we introduce the Sobolev embedding theorem on $\G$ (for example, see \cite{Fis} Theorem 4.4.28 and see also \cite{Folland2} ).

\begin{prop}[\cite{Fis}, \cite{Folland2}]\label{embedding}
Let $\G$ be the stratified Lie groups with homogeneous dimension $N$.
\begin{enumerate}
\item If $a,b\in{\mathbb R}$ with $a<b$ and $1<p<\infty$, then the continuous inclusions $L^p_b\subset L^p_a$ holds. 
\item If $1<p<q<\infty$ and $a,b\in{\mathbb R}$ with $b-a=N\left(\frac{1}{p}-\frac{1}{q}\right)$, then we have the continuous inclusion
\begin{align}
L^p_b(\G)\subset L^q_a(\G),
\end{align}
that is, there exists a constant $C>0$ such that
\[\|f\|_{L^q_a}\leq C\|f\|_{L^p_b},\]
where $C$ depends only $a,b,p,q$, independent of $f\in L^p_b(\G)$.\\
If $1<p<\infty$ and $s>N/p$, then we have $L^p_s(\G)\subset L^{\infty}(\G)$. Furthermore, there exists a constant $C_{s,p}>0$ independent of $f$ such that 
\[\|f\|_{\infty}\leq C\|f\|_{L^p_s}.\]

\end{enumerate}
\end{prop}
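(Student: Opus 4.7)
The plan is to reduce each assertion to a mapping property of the Bessel potential $(\mathrm{Id}+\mathcal{L})^{-\sigma}$ for $\sigma>0$. Setting $g:=(\mathrm{Id}+\mathcal{L})^{b/2}f\in L^p$ for $f\in L^p_b$, one has $(\mathrm{Id}+\mathcal{L})^{a/2}f=(\mathrm{Id}+\mathcal{L})^{-(b-a)/2}g$, so (i) reduces to the $L^p\to L^p$ boundedness of $(\mathrm{Id}+\mathcal{L})^{-(b-a)/2}$, (ii) reduces to its $L^p\to L^q$ boundedness at the critical parameter $\sigma=(b-a)/2=\frac{N}{2}(\frac{1}{p}-\frac{1}{q})$, and the $L^\infty$ statement reduces to $(\mathrm{Id}+\mathcal{L})^{-s/2}:L^p\to L^\infty$ for $s>N/p$. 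Part (i) is immediate from the subordination formula
\[(\mathrm{Id}+\mathcal{L})^{-\sigma}g=\frac{1}{\Gamma(\sigma)}\int_0^\infty\nu^{\sigma-1}e^{-\nu}e^{-\nu\mathcal{L}}g\,d\nu,\]
Minkowski's inequality, and the $L^p$-contraction $\|e^{-\nu\mathcal{L}}g\|_p\leq\|g\|_p$ from Proposition \ref{3-1}, since $\int_0^\infty\nu^{\sigma-1}e^{-\nu}d\nu=\Gamma(\sigma)<\infty$.

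Part (ii) is the heart of the matter. A naive Minkowski argument with Proposition \ref{17244} produces $\int_0^\infty\nu^{\sigma-1-\frac{N}{2}(1/p-1/q)}e^{-\nu}d\nu$, whose integrand at $\nu=0$ is exactly $\nu^{-1}$ at the critical index, so this direct route diverges. Instead I would study the convolution kernel $\mathcal{B}_\sigma(x)=\frac{1}{\Gamma(\sigma)}\int_0^\infty\nu^{\sigma-1}e^{-\nu}h_\nu(x)d\nu$ pointwise: substituting the Gaussian bound of Lemma \ref{bbb} and splitting the $\nu$-integral at the thresholds $\nu\sim\rho(x)^2$ and $\nu\sim 1$ yields, for $0<2\sigma<N$, the two-regime estimate
\[|\mathcal{B}_\sigma(x)|\lesssim\rho(x)^{2\sigma-N}\ \text{if}\ \rho(x)\leq 1,\qquad |\mathcal{B}_\sigma(x)|\lesssim e^{-c\rho(x)}\ \text{if}\ \rho(x)\geq 1.\]
Splitting $\mathcal{B}_\sigma=\mathcal{B}_\sigma^{\mathrm{loc}}+\mathcal{B}_\sigma^{\mathrm{far}}$ accordingly, convolution with $\mathcal{B}_\sigma^{\mathrm{far}}$ is $L^p\to L^q$ for all $p\leq q$ by Young's inequality (the exponential decay places $\mathcal{B}_\sigma^{\mathrm{far}}$ in every $L^r(\G)$), while convolution with the local piece is dominated pointwise by the Riesz-type potential $I_{2\sigma}|f|(x):=\int_{\rho(z)\leq 1}\rho(z)^{2\sigma-N}|f(xz^{-1})|dz$.

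The main obstacle is then the endpoint estimate $\|I_{2\sigma}f\|_q\lesssim\|f\|_p$ when $1/q=1/p-2\sigma/N$, which replaces the Fourier-analytic proof of Hardy--Littlewood--Sobolev used on $\mathbb{R}^n$. I would establish it by Hedberg's trick on $\G$: splitting the defining integral of $I_{2\sigma}|f|(x)$ at a radius $r\in(0,1]$, bounding the small-ball part by $Cr^{2\sigma}M[f](x)$ via dyadic decomposition of $\{\rho(z)\leq r\}$ and the Hardy--Littlewood maximal function, bounding the complementary annular part by H\"older together with the polar-coordinate computation $\int_{r\leq\rho(z)\leq 1}\rho(z)^{(2\sigma-N)p'}dz\lesssim r^{(2\sigma-N/p)p'}$ (valid precisely in the Sobolev range $2\sigma<N/p$), and optimizing in $r$, one reaches the pointwise bound
\[I_{2\sigma}|f|(x)\lesssim\|f\|_p^{1-p/q}M[f](x)^{p/q};\]
raising to the $q$-th power, integrating, and applying the $L^p$-boundedness of $M$ (Proposition \ref{2_21_20}) completes part (ii) in the range $0<2\sigma<N$; larger values of $\sigma$ are handled by composition with part (i). Finally, the $L^\infty$ embedding for $s>N/p$ follows directly from the same kernel bounds, which place $\mathcal{B}_{s/2}$ in $L^{p'}(\G)$ exactly when $(s-N)p'>-N$, after which H\"older yields $\|f\|_\infty\leq\|\mathcal{B}_{s/2}\|_{p'}\|(\mathrm{Id}+\mathcal{L})^{s/2}f\|_p$.
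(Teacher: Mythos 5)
Your argument is mathematically sound, but note that the paper does not prove Proposition \ref{embedding} at all: it is imported wholesale from Fischer--Ruzhansky (Theorem 4.4.28) and Folland, so there is no in-paper proof to match. What you have written is essentially a self-contained reconstruction of the classical Folland-type proof: reduce to the Bessel potential $(\mathrm{Id}+\mathcal{L})^{-\sigma}$ via the subordination formula, extract the kernel bounds $|\mathcal{B}_\sigma(x)|\lesssim\rho(x)^{2\sigma-N}$ near the identity and exponential decay at infinity from the Gaussian heat-kernel estimate of Lemma \ref{bbb}, and then prove the fractional integration theorem on $\G$ by Hedberg's trick with the maximal function of Proposition \ref{2_21_20}. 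All the ingredients you invoke are genuinely available in the group setting (the volume growth $|B(x,r)|=cr^N$, polar coordinates, left-invariance of Haar measure under $x'\mapsto xz^{-1}$), your identification of where the naive Minkowski argument fails at the critical index is exactly right, and the exponent bookkeeping ($2\sigma=N(1/p-1/q)<N/p$ for the Hölder step, $(s-N)p'>-N$ for the $L^\infty$ case) checks out. Two small points you should make explicit if this were to stand as a full proof: since $L^p_s(\G)$ is defined as a completion of $\mathcal{S}(\G)$, the reduction $f\mapsto g=(\mathrm{Id}+\mathcal{L})^{b/2}f$ and the identity $(\mathrm{Id}+\mathcal{L})^{a/2}f=(\mathrm{Id}+\mathcal{L})^{-(b-a)/2}g$ should be justified on $\mathcal{S}(\G)$ via Proposition \ref{3_15_1}(ii) and then extended by density; and in the optimization step of Hedberg's trick one must handle the constraint $r\le 1$ (the case $M[f](x)\le\|f\|_p$ is where the unconstrained optimum exceeds $1$, and there the small-ball term alone already gives the claimed bound). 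Neither is a gap in the idea, only in the write-up.
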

\noindent

Next, we introduce the Gagliardo - Nirenberg inequality on $\G$ (see \cite{Cou}, Proposition 32 and see also \cite{Fis}, p. 247).
\begin{prop}[\cite{Cou},\cite{Fis}]\label{3_11_5}
Let $\alpha,\gamma>0$, $1<p,r<\infty$, $1<q\leq \infty$ and $0<\theta<1$ be such that $\gamma=\theta\alpha$ and $r^{-1}=\theta p^{-1}$. Then there exists $C>0$ such that for any $f\in \dot{L}^p_{\alpha}(\G)\cap L^q(\G)$,  
\begin{align}
\|f\|_{\dot{L}^r_\gamma}\leq C\|f\|^{\theta}_{\dot{L}^p_{\alpha}}\|f\|^{1-\theta}_{q}.
\end{align}
\end{prop}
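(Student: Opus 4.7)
The plan is to reduce the Gagliardo--Nirenberg inequality to a pointwise Hedberg-type estimate in terms of the Hardy--Littlewood maximal function, and then to apply H\"older's inequality together with the $L^p$-boundedness of the maximal operator (Proposition~\ref{2_21_20}).

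Since $\gamma=\theta\alpha<\alpha$, I would set $s:=(1-\theta)\alpha/2>0$ and $g:={\mathcal L}^{\alpha/2}f$, so that formally ${\mathcal L}^{\gamma/2}f={\mathcal L}^{-s}g$; the definition of the negative fractional power of ${\mathcal L}$ yields the representation
\[
{\mathcal L}^{\gamma/2}f(x)=\frac{1}{\Gamma(s)}\int_0^\infty \nu^{s-1}\,e^{-\nu{\mathcal L}}g(x)\,d\nu,
\]
which I would split at a free scale $T>0$. For $\nu\in(0,T)$ I use the pointwise domination $|e^{-\nu{\mathcal L}}g(x)|\lesssim M[g](x)$, uniform in $\nu$, which follows from the Gaussian estimate of Lemma~\ref{bbb} together with the polar coordinate formula on $\G$ and a Stein-type radial majorant argument; this piece is therefore $\lesssim T^{s}M[g](x)$. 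For $\nu\in(T,\infty)$ I rewrite $e^{-\nu{\mathcal L}}g={\mathcal L}^{\alpha/2}e^{-\nu{\mathcal L}}f$ and invoke the analogous pointwise bound $|{\mathcal L}^{\alpha/2}e^{-\nu{\mathcal L}}f(x)|\lesssim \nu^{-\alpha/2}M[f](x)$ on the (fractional) derivatives of the heat semigroup; the resulting $\nu$-integral of $\nu^{-\theta\alpha/2-1}$ converges at infinity and produces $\lesssim T^{-\theta\alpha/2}M[f](x)$. Optimizing the sum $T^{s}M[g](x)+T^{-\theta\alpha/2}M[f](x)$ over $T>0$ yields the pointwise Hedberg estimate
\[
|{\mathcal L}^{\gamma/2}f(x)|\lesssim M[{\mathcal L}^{\alpha/2}f](x)^{\theta}\,M[f](x)^{1-\theta}.
\]

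Taking the $L^r$ norm of this bound, applying H\"older's inequality with conjugate exponents $(p/\theta,\,q/(1-\theta))$ to distribute the two factors, and then invoking the $L^p$-boundedness of the maximal operator (Proposition~\ref{2_21_20}, with the trivial bound $M[f]\leq \|f\|_\infty$ in the limiting case $q=\infty$) completes the argument. The main obstacle is establishing the two pointwise kernel estimates used in the splitting: both require producing a radial, decreasing, integrable majorant of the heat kernel $h_\nu$ (and of its relevant ${\mathcal L}$-derivatives) from the Gaussian bound of Lemma~\ref{bbb}, after which the classical Stein argument shows that convolution with such a majorant is dominated by the maximal function. The non-commutativity of the convolution on $\G$ requires mild care but causes no genuine difficulty, since the heat kernel satisfies $h_\nu(x)=h_\nu(x^{-1})$ by Proposition~\ref{3-1}.
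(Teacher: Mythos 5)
The paper does not actually prove this proposition: it is imported by citation from \cite{Cou} (Proposition 32) and \cite{Fis}, so your argument should be judged as a self-contained proof rather than against an in-paper one. Your route --- the Hedberg trick of writing ${\mathcal L}^{\gamma/2}f={\mathcal L}^{-s}({\mathcal L}^{\alpha/2}f)$ with $s=(1-\theta)\alpha/2$, splitting the subordination integral at a scale $T$, dominating each piece pointwise by a maximal function, and optimizing in $T$ to get $|{\mathcal L}^{\gamma/2}f|\lesssim M[{\mathcal L}^{\alpha/2}f]^{\theta}M[f]^{1-\theta}$ --- is the standard semigroup proof of Gagliardo--Nirenberg on spaces carrying Gaussian heat kernel bounds, and the skeleton is sound. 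The small-$\nu$ piece is indeed covered by the machinery already in the paper: the radial decreasing majorant argument behind \eqref{2_21_3} in the proof of Lemma~\ref{2_21_2} gives $|e^{-\nu{\mathcal L}}g(x)|\lesssim M[g](x)$ uniformly in $\nu$ from Lemma~\ref{bbb}, and non-commutativity of the convolution is harmless exactly as you say. One genuine point of substance: your claim to derive the correct exponent relation from H\"older with conjugate pair $(p/\theta,\,q/(1-\theta))$ requires $1/r=\theta/p+(1-\theta)/q$; the proposition as printed states $r^{-1}=\theta p^{-1}$, which is inconsistent with scaling for finite $q$ and is evidently a typo (the application in Section 6 uses the version you prove), so you are proving the right statement.

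Two gaps remain that you acknowledge only partially. First, the large-$\nu$ bound $|{\mathcal L}^{\alpha/2}e^{-\nu{\mathcal L}}f(x)|\lesssim \nu^{-\alpha/2}M[f](x)$ does \emph{not} follow from Lemma~\ref{bbb}, which bounds only $h_t$ itself: you need pointwise Gaussian upper bounds for ${\mathcal L}^{k}h_t$, $k\in{\mathbb N}$ (available in \cite{Var2}), and then a subordination argument to pass to the fractional power ${\mathcal L}^{\alpha/2}h_\nu$ and to check that the resulting kernel still admits a radial, decreasing, integrable majorant with $L^1$-norm uniform in $\nu$ after scaling; this is standard but is an ingredient you must import, not a consequence of what the paper states. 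Second, the identification of the pointwise-convergent integral $\Gamma(s)^{-1}\int_0^\infty\nu^{s-1}e^{-\nu{\mathcal L}}g\,d\nu$ with ${\mathcal L}^{\gamma/2}f$ is only ``formal'' in your write-up: one should verify convergence of the truncations in $L^r$ (dominated by the $L^r$ function $M[g]^\theta M[f]^{1-\theta}$) and then invoke the composition rule of Proposition~\ref{3_15_1}~(ii), first for $f$ in a dense class and then by density. With these two points supplied, your proof is complete and arguably more self-contained than the paper's bare citation.
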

\noindent

Next, we introduce the generalized Leibniz rule on $\G$ (see \cite{Cou}, Theorem 4.)
\begin{prop}[\cite{Cou}]\label{3_11_4}
Let $s\geq 0$ and $1<p_1,q_2\leq \infty$, $1<r,p_2,q_1<\infty$, $r^{-1}=p_i^{-1}+q^{-1}_i$ for $i=1,2$. 
Then there exists $C>0$ such that for any $f\in L^{p_1}(\G)\cap\dot{L}^{p_2}_s(\G)$ and $g\in L^{q_2}(\G)\cap\dot{L}^{q_1}_s(\G)$, $fg\in \dot{L}^r_s(\G)$ and 
\begin{align}
\|fg\|_{\dot{L}^r_s}\leq C(\|f\|_{p_1}\|g\|_{\dot{L}^{q_1}_s}+\|g\|_{q_2}\|f\|_{\dot{L}^{p_2}_s}).
\end{align}
\end{prop}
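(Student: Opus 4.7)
\textbf{Proof plan for Proposition \ref{3_11_4}.}
The strategy is to adapt the classical Coifman--Meyer/Kato--Ponce paraproduct argument to the stratified group setting, using the heat-semigroup-based Littlewood--Paley theory associated to ${\mathcal L}$ in place of the Fourier-based one on ${\mathbb R}^n$.

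First, I would fix a dyadic partition of unity $\sum_{j\in\mathbb{Z}}\phi(2^{-j}\lambda)=1$ on $(0,\infty)$ with $\phi\in C_c^{\infty}((0,\infty))$, and define Littlewood--Paley projectors $\Delta_j:=\phi(2^{-j}{\mathcal L}^{1/2})$ and low-frequency cut-offs $S_j:=\sum_{k\le j}\Delta_k$ through the spectral calculus of ${\mathcal L}$. Using the Gaussian bound on $h_t$ (Lemma \ref{bbb}) and similar estimates for derivatives of $h_t$, one shows that the kernel of $S_j$ is controlled by an $L^1$-normalized bump at scale $2^{-j}$, giving the pointwise domination $|S_jf(x)|\lesssim M[f](x)$. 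In parallel, a Mihlin-type spectral multiplier theorem for ${\mathcal L}$ on $\G$ yields the Littlewood--Paley characterization
\begin{align*}
\|f\|_{\dot{L}^r_{s}}\sim \left\|\Bigl(\sum_{j\in\mathbb{Z}}2^{2js}|\Delta_j f|^2\Bigr)^{1/2}\right\|_{r},\qquad 1<r<\infty,\ s\in\mathbb{R}.
\end{align*}

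With this in hand, I would perform the Bony paraproduct decomposition
\begin{align*}
fg=\sum_{j}S_{j-3}f\cdot \Delta_j g+\sum_{j}\Delta_j f\cdot S_{j-3}g+\sum_{|j-k|\le 2}\Delta_j f\cdot \Delta_k g =:\Pi_1+\Pi_2+\Pi_3.
\end{align*}
For $\Pi_1$, the summand $S_{j-3}f\cdot\Delta_j g$ is spectrally localized near $2^j$, so $\Delta_n\Pi_1$ is a finite sum over $|j-n|\le C$. Pulling out $|S_{j-3}f|\lesssim M[f]$ pointwise,
\begin{align*}
2^{ns}|\Delta_n\Pi_1(x)|\lesssim M[f](x)\cdot\sum_{|j-n|\le C}2^{js}|\Delta_j g(x)|.
\end{align*}
Taking the $\ell^2$-norm in $n$, the $L^r$-norm, applying H\"older with $r^{-1}=p_1^{-1}+q_1^{-1}$, the maximal-function bound $\|M[f]\|_{p_1}\lesssim\|f\|_{p_1}$ (Proposition \ref{2_21_20}; trivial if $p_1=\infty$), and the Littlewood--Paley characterization produces $\|\Pi_1\|_{\dot{L}^r_s}\lesssim\|f\|_{p_1}\|g\|_{\dot{L}^{q_1}_s}$. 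The bound for $\Pi_2$ is symmetric with the pair $(p_2,q_2)$.

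The resonant remainder $\Pi_3$ is the main technical point: each summand $\Delta_jf\cdot\Delta_{j+k}g$ with $|k|\le 2$ has spectrum in $[0,C\,2^j]$, so its $n$-th dyadic piece can be nonzero for \emph{all} $n\le j+O(1)$, not only $n\sim j$. This is where the hypothesis $s\ge 0$ is used: the factor $2^{(n-j)s}$ is summable in $j\ge n-O(1)$, so Young's convolution inequality on $\ell^2_n$ together with Fefferman--Stein (Proposition \ref{2_21_21}) and a pointwise bound of one factor by the maximal function delivers the same estimate. Putting the three pieces together yields the claimed inequality. The principal obstacle is not the paraproduct bookkeeping, which is formally identical to the Euclidean case, but the setup of the heat-semigroup Littlewood--Paley theory on $\G$, namely the Mihlin multiplier theorem and the pointwise maximal bound on $S_j$; both require sharp Gaussian estimates for $h_t$ and its $X^I$-derivatives, and it is precisely this analytical input from \cite{Cou} that makes the proof go through on an arbitrary stratified Lie group.
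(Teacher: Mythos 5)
First, note that the paper does not prove Proposition \ref{3_11_4} at all: it is quoted verbatim from \cite{Cou} (Theorem 4 there), so there is no internal proof to compare yours against. Judged on its own terms, your plan has a genuine gap at its central step. The paraproduct bookkeeping rests on the claim that $S_{j-3}f\cdot\Delta_j g$ is ``spectrally localized near $2^j$'', so that $\Delta_n\Pi_1$ is a finite sum over $|j-n|\le C$. On ${\mathbb R}^n$ this follows from $\mathrm{supp}\,\widehat{fg}\subset\mathrm{supp}\,\widehat{f}+\mathrm{supp}\,\widehat{g}$, i.e.\ from the convolution theorem for the Fourier transform. For the spectral calculus of a sublaplacian on a stratified group of step $r\ge 2$ there is no analogue of this: if $\phi(2^{-2j}{\mathcal L})u=u$ and $\phi(2^{-2k}{\mathcal L})v=v$, the product $uv$ is in general not annihilated by $\Delta_n$ for any range of $n$. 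The same objection applies to your claim that the resonant pieces $\Delta_j f\cdot\Delta_{j+k}g$ have spectrum in $[0,C2^j]$. This is precisely the obstruction the authors point to when they say that Fourier-based paracomposition techniques do not fit stratified groups of higher step, and it is why they import the Leibniz rule from \cite{Cou} rather than proving it by paraproducts.

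The steps you flag as the main difficulty are in fact the unproblematic ones: the pointwise bound $|S_jf|\lesssim M[f]$ is proved in the paper in essentially the form you need (Lemma \ref{2_21_2} and the estimate $|h|*\eta_{j,N+2}\lesssim M[h]$), and the Littlewood--Paley characterization of $\dot{L}^r_s$ is Lemma \ref{7-2-1} (from \cite{Hu}). To rescue a semigroup-based paraproduct proof one must replace exact spectral support considerations by quantitative almost-orthogonality: for instance, write $\Delta_n=2^{-2n}\tilde\phi(2^{-2n}{\mathcal L}){\mathcal L}$ for $n\gg j$, expand ${\mathcal L}(S_{j-3}f\cdot\Delta_j g)$ by the classical Leibniz rule for the vector fields $X_i$, and trade the gain $2^{-2(n-j)}$ against the cost $2^{2j}$ of the extra derivatives; this is the route taken in the semigroup-paraproduct literature and is a substantial additional argument, not bookkeeping. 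The actual proof in \cite{Cou} proceeds by an entirely different method: for $0<s<1$ the norm $\|\cdot\|_{\dot{L}^p_s}$ is characterized by the $L^p$ norm of a square function of averaged differences, for which the Leibniz estimate reduces to a pointwise inequality, and larger $s$ is handled by iterating $\|f\|_{\dot{L}^p_s}\sim\sum_i\|X_if\|_{\dot{L}^p_{s-1}}$. As written, your argument would not survive beyond the Euclidean (step one) case.
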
 
\noindent

Finally, we give the $L_{s+\theta}^p - L_s^p$ estimate on $\G$.
\begin{prop}[$L_{s+\theta}^p - L_s^p$ estimate]\label{L4}
Assume $1< p< \infty$, $s\geq 0$ and $\theta\geq 0$. Then there exists a positive constant $C$ (independent of t) such that
\[\|e^{-t{\mathcal L}}\varphi\|_{L^p_{s+\theta}}\leq Ct^{-\frac{\theta}{2}}\|\varphi\|_{L^p_s},\ 0<t<1,\]
for any $\varphi\in L^{p}_s(\G)$, where $e^{-t{\mathcal L}}f(x)=(f*h_t)(x)$ and $h_t$ is the heat kernel associated to a sublaplacian ${\mathcal L}$ on $\G$. 
\end{prop}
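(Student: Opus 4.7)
By density of $\mathcal{S}(\G)$ in $L^p_s(\G)$ it suffices to treat $\varphi\in\mathcal{S}(\G)$. The plan is to reduce the inequality to the homogeneous heat-semigroup estimate $\|\mathcal{L}^{\theta/2}e^{-t\mathcal{L}}\psi\|_p\lesssim t^{-\theta/2}\|\psi\|_p$ and then establish the latter via convolution with a rescaled heat kernel. The case $\theta=0$ is immediate from the commutation $(\mathrm{Id}+\mathcal{L})^{s/2}e^{-t\mathcal{L}}=e^{-t\mathcal{L}}(\mathrm{Id}+\mathcal{L})^{s/2}$ on $\mathcal{S}(\G)$ together with contractivity of $\{e^{-t\mathcal{L}}\}_{t>0}$ on $L^p(\G)$ (Proposition \ref{3-1}(i)(c)). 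Assume henceforth $\theta>0$. Then $s+\theta>0$, so Proposition \ref{3_31_1}(iii) gives the norm equivalence
\[\|e^{-t\mathcal{L}}\varphi\|_{L^p_{s+\theta}}\sim \|e^{-t\mathcal{L}}\varphi\|_p+\|e^{-t\mathcal{L}}\varphi\|_{\dot{L}^p_{s+\theta}},\]
and I would control each term separately.

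For the $L^p$ piece, contractivity combined with the embedding $L^p_s\subset L^p$ (from Proposition \ref{embedding}(i) and $L^p_0=L^p$) yields $\|e^{-t\mathcal{L}}\varphi\|_p\le C\|\varphi\|_{L^p_s}\le Ct^{-\theta/2}\|\varphi\|_{L^p_s}$, since $t^{-\theta/2}\ge 1$ for $0<t<1$ and $\theta>0$. For the homogeneous piece, Proposition \ref{3_15_1}(ii) together with functional-calculus commutativity gives $\mathcal{L}^{(s+\theta)/2}e^{-t\mathcal{L}}=\mathcal{L}^{\theta/2}e^{-t\mathcal{L}}\mathcal{L}^{s/2}$ on $\mathcal{S}(\G)$, so that
\[\|e^{-t\mathcal{L}}\varphi\|_{\dot{L}^p_{s+\theta}}=\|\mathcal{L}^{\theta/2}e^{-t\mathcal{L}}(\mathcal{L}^{s/2}\varphi)\|_p.\]
The problem thus reduces to the key estimate $\|\mathcal{L}^{\theta/2}e^{-t\mathcal{L}}\psi\|_p\le Ct^{-\theta/2}\|\psi\|_p$, applied with $\psi=\mathcal{L}^{s/2}\varphi$, after which the proof closes using $\|\mathcal{L}^{s/2}\varphi\|_p=\|\varphi\|_{\dot{L}^p_s}\le\|\varphi\|_{L^p_s}$.

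To prove the key estimate, I would use $e^{-t\mathcal{L}}\psi=\psi*h_t$ together with the rule $X_i(f*h)=f*X_ih$ recalled in Section 2, so that $\mathcal{L}$ (and its fractional powers) acts on the right factor of convolutions, giving $\mathcal{L}^{\theta/2}e^{-t\mathcal{L}}\psi=\psi*(\mathcal{L}^{\theta/2}h_t)$. Young's inequality then reduces the task to $\|\mathcal{L}^{\theta/2}h_t\|_1\lesssim t^{-\theta/2}$. Combining the scaling $h_t(x)=t^{-N/2}h_1(\delta_{t^{-1/2}}(x))$ (obtained from Proposition \ref{3-1}(i)(b)) with the $\delta_\lambda$-homogeneity of $\mathcal{L}$ of degree two, one obtains
\[(\mathcal{L}^{\theta/2}h_t)(x)=t^{-(N+\theta)/2}\,(\mathcal{L}^{\theta/2}h_1)\bigl(\delta_{t^{-1/2}}(x)\bigr),\]
and a direct change of variable yields $\|\mathcal{L}^{\theta/2}h_t\|_1=t^{-\theta/2}\|\mathcal{L}^{\theta/2}h_1\|_1$, which delivers the target bound provided $\mathcal{L}^{\theta/2}h_1\in L^1(\G)$.

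The main obstacle is precisely this last integrability, together with the rigorous justification of the pointwise homogeneity identity and of the convolution identity $\mathcal{L}^{\theta/2}(\psi*h_t)=\psi*(\mathcal{L}^{\theta/2}h_t)$ for non-integer $\theta/2$. For integer $\theta/2$ both follow immediately from $h_1\in\mathcal{S}(\G)$ (Proposition \ref{3-1}(i)(a)) and the differential-operator form of $\mathcal{L}$. For fractional powers I would use the integral representation of $\mathcal{L}^{\theta/2}$ stated in the text together with Lemma \ref{bbb} applied to $\mathcal{L}^k e^{-\nu\mathcal{L}}h_1$ for $k=[\theta/2]+1$ to obtain Gaussian-type decay, and hence $L^1$-integrability, of $\mathcal{L}^{\theta/2}h_1$; the convolution identity would then be verified by Fubini and the left-invariance of $\mathcal{L}$, and the homogeneity identity by analytic continuation from the integer case.
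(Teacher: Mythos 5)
Your proposal is correct and follows essentially the same route as the paper: split the inhomogeneous norm into the $L^p$ piece (handled by contractivity) and the homogeneous piece, write $\mathcal{L}^{\frac{s+\theta}{2}}e^{-t\mathcal{L}}\varphi=(\mathcal{L}^{\frac{s}{2}}\varphi)*(\mathcal{L}^{\frac{\theta}{2}}h_t)$, and reduce via Young's inequality to the bound $\|\mathcal{L}^{\frac{\theta}{2}}h_t\|_1\lesssim t^{-\theta/2}$. The only difference is cosmetic: the ``obstacle'' you flag at the end (integrability of $\mathcal{L}^{\theta/2}h_1$ and the pointwise homogeneity identity for fractional powers) is dispatched in the paper by writing $\mathcal{L}^{\frac{\theta}{2}}h_t=\frac{1}{\Gamma(k-\frac{\theta}{2})}\int_0^{\infty}\nu^{k-\frac{\theta}{2}-1}\mathcal{L}^k h_{t+\nu}\,d\nu$ and using $\|\mathcal{L}^k h_{t+\nu}\|_1=(t+\nu)^{-k}\|\mathcal{L}^k h_1\|_1$ with the change of variable $\nu\mapsto t\nu$, which yields the $t^{-\theta/2}$ bound directly without any analytic continuation.
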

\noindent
\begin{proof}
By Corollary 3.6 in \cite{Folland2}, for any positive integer $k$, ${\mathcal L}^kh_t\in L^1(\G)$ and ${\mathcal L}^kh_t(g)=t^{-k-N/2}{\mathcal L}^kh_1(t^{-1/2}g)$. So we have
\begin{align}
\|{\mathcal L}^kh_t\|_1=t^{-k}\|{\mathcal L}^kh_1\|_1\label{5-1}
\end{align}
for $t>0$.
On the other hand, since we can commute $e^{-t{\mathcal L}}$ with ${\mathcal L}^k$ on $\mathrm{Dom}~({\mathcal L}^k)$, we have
\begin{align}
{\mathcal L}^ke^{-t{\mathcal L}}f=e^{-t{\mathcal L}}({\mathcal L}^kf). \label{5-2}
\end{align}
By \eqref{5-1} and \eqref{5-2}, the following lemma concerning on homogeneous Sobolev spaces $\dot{L}^p_s(\G)$ holds.
\begin{lem}\label{5-3}
Let $1<p<\infty$ and $s\geq 0$. Then for any $\theta\geq 0$, there exists a positive constant $C_{\theta}$ such that 
\begin{align}
\|e^{-t{\mathcal L}}f\|_{\dot{L}^p_{s+\theta}}\leq Ct^{-\frac{\theta}{2}}\|f\|_{\dot{L}^p_s}, t>0
\end{align}
for $f\in\dot{L}^p_s(\G)$.
\end{lem}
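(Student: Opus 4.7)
The plan is to reduce the homogeneous Sobolev estimate to an $L^p \to L^p$ bound for ${\mathcal L}^{\theta/2}e^{-t{\mathcal L}}$, and then treat the fractional order $\theta/2$ by writing it as an integral of integer-order operators in the spirit of the defining formula for ${\mathcal L}^{s}_p$.

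First, working initially on a dense subclass (say ${\mathcal S}(\G)\cap\mathrm{Dom}~({\mathcal L}^{s/2}_p)$, which is dense in $\dot{L}^p_s(\G)$ by Definition \ref{3_1_1}), I would use Proposition \ref{3_15_1} (ii) to factor ${\mathcal L}^{(s+\theta)/2}={\mathcal L}^{\theta/2}{\mathcal L}^{s/2}$, and then use the commutation of ${\mathcal L}^{\theta/2}$ with the heat semigroup (equation \eqref{5-2} for integer powers, extended to fractional powers via the integral definition). Setting $g={\mathcal L}^{s/2}f$, so that $\|g\|_p=\|f\|_{\dot{L}^p_s}$, the desired estimate reduces to
\[
\|{\mathcal L}^{\theta/2}e^{-t{\mathcal L}}g\|_p\leq C\,t^{-\theta/2}\|g\|_p.
\]

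For integer values $\theta/2=k\in{\mathbb N}_0$, this is immediate: ${\mathcal L}^k e^{-t{\mathcal L}}g=g*{\mathcal L}^k h_t$, and Young's inequality together with the scaling identity \eqref{5-1} gives $\|{\mathcal L}^k e^{-t{\mathcal L}}g\|_p\leq \|{\mathcal L}^k h_t\|_1\|g\|_p=t^{-k}\|{\mathcal L}^k h_1\|_1\|g\|_p$. The case $\theta=0$ is also clear from the contractivity of $e^{-t{\mathcal L}}$ on $L^p$.

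For general $\theta>0$, let $k=[\theta/2]+1$ and use the integral representation
\[
{\mathcal L}^{\theta/2}e^{-t{\mathcal L}}g=\frac{1}{\Gamma(k-\theta/2)}\int_0^{\infty}\nu^{k-\theta/2-1}{\mathcal L}^k e^{-(\nu+t){\mathcal L}}g\,d\nu,
\]
obtained by composing the defining formula for ${\mathcal L}^{\theta/2}$ with $e^{-t{\mathcal L}}$ via the semigroup property $e^{-\nu{\mathcal L}}e^{-t{\mathcal L}}=e^{-(\nu+t){\mathcal L}}$. Taking $L^p$ norms and applying the integer-case bound to $e^{-(\nu+t){\mathcal L}}$ yields
\[
\|{\mathcal L}^{\theta/2}e^{-t{\mathcal L}}g\|_p\leq \frac{\|{\mathcal L}^k h_1\|_1}{\Gamma(k-\theta/2)}\|g\|_p\int_0^{\infty}\nu^{k-\theta/2-1}(\nu+t)^{-k}\,d\nu.
\]
The substitution $\nu=tu$ turns the last integral into $t^{-\theta/2}B(k-\theta/2,\theta/2)$, which is finite because $0<\theta/2<k$ makes the integrand locally integrable at $0$ and decays like $u^{-\theta/2-1}$ at infinity. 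This yields the claimed bound, and a density argument extends it to all $f\in\dot{L}^p_s(\G)$.

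The main delicate point is justifying the integral representation of ${\mathcal L}^{\theta/2}e^{-t{\mathcal L}}g$ and interchanging $L^p$ norms with the $\nu$-integral; this is where the argument really uses that $g\in L^p$ (so the integrand is a Bochner-measurable $L^p$-valued function) together with the absolute convergence of the above scalar integral in $\nu$.
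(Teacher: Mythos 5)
Your argument is correct and follows essentially the same route as the paper: both use the integral representation of ${\mathcal L}^{\theta/2}$ in terms of ${\mathcal L}^k e^{-\nu{\mathcal L}}$, the scaling identity \eqref{5-1}, and the substitution $\nu=tu$ to produce the factor $t^{-\theta/2}$; the only cosmetic difference is that the paper first bounds $\|{\mathcal L}^{\theta/2}h_t\|_1$ and then applies Young's inequality, whereas you apply Minkowski's inequality to the $L^p$-valued integral directly.
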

If we assume that Lemma \ref{5-3} holds, then since 
\[\|f\|_{L^p_s}\sim\|f\|_p+\|f\|_{\dot{L}^p_s},\]
by Young's inequality, we have
\begin{align}
\|e^{-t{\mathcal L}}f\|_{L^p_{s+\theta}} &\sim \|e^{-t{\mathcal L}}f\|_p+\|e^{-t{\mathcal L}}f\|_{\dot{L}^p_{s+\theta}}\\
                                                   &\leq \|f\|_p\|h_t\|_1+Ct^{-\frac{\theta}{2}}\|f\|_{\dot{L}^p_s}\\
                                                   &= \|f\|_p+Ct^{-\frac{\theta}{2}}\|f\|_{\dot{L}^p_s}\\
                                                   &\leq \tilde{C} t^{-\frac{\theta}{2}}\|f\|_{{{L}^p_s}}
\end{align}
for $t\in (0,1)$, where $\tilde{C}>0$ is a constant, independent of $t$. It remains to show the proof of Lemma \ref{5-3}. If $\theta=0$, it is clear. So let $\theta> 0$, $s\geq 0$ and $t>0$. 
By the same reason for \eqref{5-2}, we have ${\mathcal L}^{\frac{s}{2}}e^{-t{\mathcal L}}f=e^{-t{\mathcal L}}{\mathcal L}^{\frac{s}{2}}f$. Hence we obtain 
\begin{align}
{\mathcal L}^{\frac{s+\theta}{2}}e^{-t{\mathcal L}}f=({\mathcal L}^{\frac{s}{2}}f)*({\mathcal L}^{\frac{\theta}{2}}h_t).
\end{align}
On the other hand, by \eqref{5-1}, ${\mathcal L}^{\frac{\theta}{2}}h_t$ is well-defined for any $\theta> 0$ in $L^1(\G)$ and we have
\begin{align}
\|{\mathcal L}^{\frac{\theta}{2}}h_t\|_1 &= \disp\left\|\disp\frac{1}{\Gamma(k-\frac{\theta}{2})}\disp\int_0^{\infty}\nu^{k-\frac{\theta}{2}-1}{\mathcal L}^kh_{t+\nu}d\nu\right\|_1\\
                                                        &\leq \disp\frac{1}{\Gamma(k-\frac{\theta}{2})}\disp\int_0^{\infty}\nu^{k-\frac{\theta}{2}-1}(t+\nu)^{-k}\|{\mathcal L}^kh_1\|_1d\nu\\
                                                        &\leq \disp\frac{Ct^{-k}}{\Gamma(k-\frac{\theta}{2})}\disp\int_0^{\infty}\nu^{k-\frac{\theta}{2}-1}\left(1+\disp\frac{\nu}{t}\right)^{-k}d\nu\\
                                                        &= \disp\frac{Ct^{-\frac{\theta}{2}}}{\Gamma(k-\frac{\theta}{2})}\disp\int_0^{\infty}\nu^{k-\frac{\theta}{2}-1}\left(1+\nu\right)^{-k}d\nu\\
                                                        &\leq C_{\theta}t^{-\frac{\theta}{2}}.
\end{align}
By Young's inequality, we obtain
\begin{align}
\|{\mathcal L}^{\frac{s+\theta}{2}}e^{-t{\mathcal L}}f\|_p &= \|({\mathcal L}^{\frac{s}{2}}f)*({\mathcal L}^{\frac{\theta}{2}}h_t)\|_p\\
                                                                  &\leq \|{\mathcal L}^{\frac{s}{2}}f\|_p\|{\mathcal L}^{\frac{\theta}{2}}h_t\|_1\\
                                                                  &\leq C_{\theta}t^{-\frac{\theta}{2}}\|{\mathcal L}^{\frac{s}{2}}f\|_p.
\end{align}
Therefore we can see that for any $\theta\geq 0$, there exists a positive constant $C$ such that
\begin{align}
\|e^{-t{\mathcal L}}f\|_{\dot{L}^p_{s+\theta}}\leq Ct^{-\frac{\theta}{2}}\|f\|_{\dot{L}^p_s},\ t>0,
\end{align} 
for $s\geq 0$. \\
\end{proof}

\section{The proof of Theorem \ref{th1}}\label{17866}
\noindent

We assume that Theorem \ref{th2} holds and $u_0$ belongs to $L^p_s(\G)$, where $s$ satisfies \eqref{1-5} and $s>s_c=N/p-2/(\alpha-1)$. Set
\[\Phi(u)(t,x)=\disp\int_0^te^{-(t-\tau){\mathcal L}}F(u(\tau,x))~d\tau\]
and define the exponent $\tilde{p}$ by
\begin{align}
\disp\frac{1}{\tilde{p}}=\disp\frac{1}{p}-\disp\frac{s}{N}.\label{asa}
\end{align}
Then by \eqref{1-5} and $s>s_c$, we have 
\begin{align}
\tilde{p}>\alpha\ \ \text{and}\ \ \tilde{p}>p_c=N(\alpha-1)/2.
\end{align}
Furthermore we define the space
\[X=C([0,T_0);L^{\tilde{p}}(\G))\ \ \text{and}\ \ Y=C([0,T_0);L^p_s(\G))\]
with the norms
\[\|u\|_X:=\disp\sup_{0\leq t<T_0}\|u\|_{\tilde{p}},\ \|u\|_Y:=\disp\sup_{0\leq t<T_0}\|u\|_{L^p_s}\]
for some $T_0>0$. Then by Proposition \ref{embedding} (Sobolev embedding theorem), we have 
the including relationship
\[Y\hookrightarrow X.\]
Let us consider the sequence of functions 
\[u^0=e^{-t{\mathcal L}}u_0(x),\ u^{j+1}=u^0+\Phi(u^j)\]
for $j=0,1,2,\cdots$. First by a standard fixed point argument, we will prove that $\{u^j\}$ converges strongly in $X$ to a function $u$ which verifies \eqref{1-4}. Second, by Theorem \ref{th2}, we will show that $u$ also belongs to $Y$.

By Proposition \ref{17244} ($L^\alpha-L^\beta$ estimate )  and  Proposition \ref{embedding} (Sobolev embedding theorem),
there exists a constant $C>0$ (independent of $u_0$ and $t$) such that
\begin{align}
\|u^0\|_X\leq \|u_0\|_{\tilde{p}}\leq C\|u_0\|_{L^p_s}.\label{6-1}
\end{align} 
Let $u,v\in X$. Then by Proposition \ref{17244} ($L^\alpha-L^\beta$ estimate)  and H$\mathrm{\ddot{o}}$lder's inequality, we obtain the estimate
\begin{align}
\|\Phi(u)(t)-\Phi(v)(t)\|_{\tilde{p}} &\leq \disp\int_0^t\|e^{-(t-\tau){\mathcal L}}(F(u)(\tau)-F(v)(\tau))\|_{\tilde{p}}~d\tau\\
                                                  &\leq M\disp\int_0^t(t-\tau)^{-\beta}\|F(u)(\tau)-F(v)(\tau)\|_{\frac{\tilde{p}}{\alpha}}~d\tau\\
                                                  &\leq M\disp\int_0^t(t-\tau)^{-\beta}\|u(\tau)-v(\tau)\|_{\tilde{p}}\left(\|u(\tau)\|_{\tilde{p}}^{\alpha-1}+\|v(\tau)\|^{\alpha-1}_{\tilde{p}}\right)~d\tau\\
                                                  &\leq MBt^{1-\beta}\|u-v\|_X\left(\|u\|_X^{\alpha-1}+\|v\|_X^{\alpha-1}\right),\label{6-2}
\end{align}
where $M$ is some positive constant, $\beta=\disp{N(\alpha-1)}/{(2\tilde{p})}<1$ and $B=\disp{1}/{(1-\beta)}$. From $F(0)=0$, \eqref{6-1} and \eqref{6-2}, we deduce that
\begin{align}
\|u^{j+1}\|_X\leq \|u^0\|_X+MBT_0^{1-\beta}\|u^j\|_X^{\alpha}\label{6-3}
\end{align}
and
\begin{align}
\|u^{j+1}-u^j\|_X\leq MBT_0^{1-\beta}\|u^j-u^{j-1}\|_X\left(\|u^{j}\|^{\alpha-1}_X+\|u^{j-1}\|^{\alpha-1}_X\right)\label{6-4}
\end{align}
By \eqref{6-1} and \eqref{6-3}, for any $j\geq 0$, we have 
\begin{align}
\|u^j\|_X< 2C\|u_0\|_{L^p_s}\label{6-5}
\end{align}
for 
\begin{align}
T_0<\left(2^{\alpha}C^{\alpha-1}MB\right)^{-\frac{1}{1-\beta}}\|u_0\|_{L^p_s}^{-\frac{\alpha-1}{1-\beta}}\label{6-6}
\end{align}
and by \eqref{6-4} and \eqref{6-5}, we obtain
\begin{align}
\|u^{j+1}-u^{j}\|_X\leq 2(2C)^{\alpha-1}MBT_0^{1-\beta}\|u_0\|_{L^p_s}^{\alpha-1}\|u^j-u^{j-1}\|_X\label{6-7}
\end{align}
for \eqref{6-6}, that is, $2(2C)^{\alpha-1}MBT_0^{1-\beta}\|u_0\|_{L^p_s}^{\alpha-1}<1$. By Banach's fixed point theorem, there is a function $u$ such that 
\[\disp\lim_{j\rightarrow\infty}u_j=u\ \text{in}\ X,\]
which solves $u=u_0+\Phi(u)$.

We proceed to prove that this solution $u$ also belongs to $Y$. Then by  Proposition \ref{L4} ($L^p_{s+\theta}-L^p_s$ estimate)  and Theorem \ref{th2}, we have
\begin{align}
\|\Phi(u^j)(t)\|_{L^p_s} &\leq \disp\int_0^t\|e^{-(t-\tau){\mathcal L}}F(u^j)(\tau)\|_{L^p_s}~d\tau\\
                                &\leq M^{\prime}\disp\int_0^t(t-\tau)^{-\frac{s-s_{\alpha}}{2}}\|F(u^j)(\tau)\|_{L^p_{s_{\alpha}}}~d\tau\\
                                &\leq M^{\prime}\disp\int_0^t(t-\tau)^{-\beta}\|u^j(\tau)\|_{L^p_s}^{\alpha}~d\tau\\
                                &\leq M^{\prime}BT^{1-\beta}_0\|u^j\|_Y^{\alpha}\label{6-8}
\end{align} 
for some constants $M^{\prime}>0$ since $\disp{(s-s_{\alpha})}/{2}=\beta<1$. By \eqref{6-8}, we have
\begin{align}
\|u^{j+1}\|_Y\leq \|u^0\|_{Y}+M^{\prime}BT^{1-\beta}_0\|u^j\|^{\alpha}_Y.\label{6-9}
\end{align}
By \eqref{6-9}, if $2M^{\prime}BT_0^{1-\beta}(2C)^{\alpha-1}\|u_0\|^{\alpha-1}_{L^p_s}<1$, we obtain 
\[\|u^j\|_Y<2C\|u_0\|_{L^p_s}\]
for any $j\geq 0$. Hence if $T_0$ satisfies the condition
\[2M^{\prime}BT_0^{1-\beta}(2C)^{\alpha-1}\|u_0\|^{\alpha-1}_{L^p_s}<1,\]
we can see that $\|u^j\|_Y$ remain bounded. So we can always take a subsequence $\{u^{j_k}\}$ such that 
this converges weakly - $\ast$ to a function $\tilde{u}\in Y$. Now the $u^{j_k}$ converge to $u$ and converge to $\tilde{u}\in {\mathcal D}^{\prime}((0,T_0)\times\G)$. Thus, $u$ coincide with $\tilde{u}$. Therefore we have proved the existence of a  solution of \eqref{1-4} in $C([0,T_0);L^p_s(\G))$.

Finally, we prove the uniqueness. Let $u,v\in Y$ be two solutions for the same initial value $u_0$ and let $0\leq T<T_0$. Then since $u-v=\Phi(u)-\Phi(v)$, we have the estimate
\begin{align}
\|u-v\|_X\leq 2MBT^{1-\beta}K^{\alpha-1}\|u-v\|_X,
\end{align}
where $K=\disp\sup_{t\in[0,T_0)}\{\|u(t)\|_{\tilde{p}}, \|v(t)\|_{\tilde{p}}\}$. Now we can take $T$ small such that $2MBT^{1-\beta}K^{\alpha-1}<1$. This implies that $u=v$ on $[0,T]$. Finitly repeating the same argument, we can see that $u=v$ on $[0,T_0)$.\\

\section{The proof of Theorem \ref{th2}}
\subsection{The case $s_{\alpha}\leq 0$}
\noindent

We suppose that $\disp{N}/{p}-\disp{N}/{\alpha}<s<N/p$, $s\geq 0$ and N1) are fulfilled, that is, we suppose that $s_{\alpha}\leq 0$ and $|F(u)|\leq C|u|^{\alpha}$ for $\alpha>1$. Let $1<p<\infty$ and $u\in L^p_s(\G)$. Then by Proposition \ref{embedding} (Sobolev embedding theorem), we have 
\begin{align}
L^p_s(\G) \subset L^{(\frac{1}{p}-\frac{s}{N})^{-1}}(\G). \label{th2-1}
\end{align}
Since $\disp{N}/{p}-\disp{N}/{\alpha}<s<N/p$, we obtain $\disp\left(\disp{1}/{p}-\disp{s}/{N}\right)^{-1}>\alpha$. Hence by \eqref{th2-1}, we have 
\begin{align}
\|F(u)\|_{(\frac{\alpha}{p}-\frac{s\alpha}{N})^{-1}}\leq C\|u\|^{\alpha}_{(\frac{1}{p}-\frac{s}{N})^{-1}}\leq C\|u\|^{\alpha}_{L^p_s}.\label{th2-2}
\end{align}
On the other hand, we obtain
\[\disp\frac{1}{p}-\disp\frac{s_{\alpha}}{N}=\disp\frac{\alpha}{p}-\disp\frac{s\alpha}{N}.\]
Since $s_{\alpha}\leq 0$ and $\disp\left({\alpha}/{p}-\disp{s\alpha}/{N}\right)^{-1}>1$, by Proposition \ref{embedding} (Sobolev embedding theorem), we have
\begin{align}
L^{(\frac{\alpha}{p}-\frac{s\alpha}{N})^{-1}}(\G)\subset L^p_{s_{\alpha}}(\G).\label{th2-3}
\end{align}
Therefore by \eqref{th2-2} and \eqref{th2-3}, we obtain 
\[\|F(u)\|_{L^p_{s_{\alpha}}}\leq C\|F(u)\|_{L^{(\frac{\alpha}{p}-\frac{s\alpha}{N})^{-1}}}\leq C\|u\|^{\alpha}_{L^p_s}.\]
Hence we have
\[\|F(u)\|_{L^p_{s_{\alpha}}}\leq C\|u\|^{\alpha}_{L^p_s}.\]
\hspace{13.3cm}$\square$

\subsection{The case $s_{\alpha}>0$}
\noindent

To prove Theorem \ref{th2} in the case $s_{\alpha}>0$, we need the following generalized chain rule. 
\begin{lem}\label{3_11_2}
Assume that $F\in C^l$, $l\in{\mathbb N}$, with
\[F(0)=0,\ |F^{(j)}(x)|\leq C|x|^{\alpha-j},\ \alpha>1,\ \alpha\geq l\]
for $j=1,2,\cdots, l$, $0\leq\delta\leq l$, $1<p,q<\infty$, $1<r\leq \infty$ and $p^{-1}=(\alpha-1)r^{-1}+q^{-1}$. If $u\in L^r(\G)\cap\dot{L}^q_{\delta}(\G)$,  then ${\mathcal L}^{\frac{\delta}{2}}F(u)\in L^p(\G)$ and 
\begin{align}
\|{\mathcal L}^{\frac{\delta}{2}}F(u)\|_p \leq C\|u\|_r^{\alpha-1}\|{\mathcal L}^{\frac{\delta}{2}}u\|_q.\label{3_11_3}
\end{align}
\end{lem}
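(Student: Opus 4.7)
The plan is to adapt the strategy of T.\ Kato on $\mathbb{R}^n$ to the stratified Lie group setting, substituting heat-kernel machinery for the Fourier-analytic tools of \cite{Kato}. First I would reduce the general case to $0<\delta\leq 1$: for $\delta\in[1,l]$, I would expand ${\mathcal L}^{\delta/2}F(u)$ by factoring ${\mathcal L}^{\delta/2}={\mathcal L}^{\delta'/2}{\mathcal L}^m$ with $0\leq \delta'<2$, applying the chain rule for horizontal derivatives on even integer factors, and controlling the resulting mixed products of $F^{(k)}(u)$ and $X^{I_j}u$ by the generalized Leibniz rule (Proposition \ref{3_11_4}) and the Gagliardo--Nirenberg inequality (Proposition \ref{3_11_5}), iterating until the remaining fractional order lies in $(0,1]$.

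For the core case $0<\delta<2$, the starting point is the subordination formula
\begin{align*}
{\mathcal L}^{\delta/2} g(x) \;=\; c_\delta \int_0^\infty \nu^{-\delta/2-1}\bigl(g(x) - e^{-\nu{\mathcal L}} g(x)\bigr)\,d\nu,
\end{align*}
which follows by an integration by parts from the Section 3 definition of ${\mathcal L}^{\delta/2}$. Applied to $g=F(u)$, with $e^{-\nu{\mathcal L}} F(u)(x)=\int_{\G} F(u(y))h_\nu(y^{-1}\cdot x)\,dy$ and the fundamental theorem of calculus combined with the hypothesis $|F'(x)|\leq C|x|^{\alpha-1}$, this yields
\begin{align*}
|{\mathcal L}^{\delta/2} F(u)(x)| \;\lesssim\; \int_0^\infty \nu^{-\delta/2-1}\!\int_{\G} h_\nu(y^{-1}\!\cdot x)\,|u(x)-u(y)|\bigl(|u(x)|^{\alpha-1}+|u(y)|^{\alpha-1}\bigr)dy\,d\nu.
\end{align*}

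The next step is to majorize this double integral pointwise by $C\,M(|u|^{\alpha-1})(x)\cdot g_\delta(u)(x)$, where $g_\delta(u)$ is a square function (or vector-valued maximal quantity) comparable in $L^q(\G)$ norm to ${\mathcal L}^{\delta/2} u$. I would accomplish this by a dyadic splitting $\nu\in[2^k,2^{k+1})$, using the Gaussian bound of Lemma \ref{bbb} to replace the heat kernel by the indicator of a ball of radius $\sim\nu^{1/2}$, and invoking the Hardy--Littlewood maximal function (Proposition \ref{2_21_20}) to convert the resulting averages of $|u|^{\alpha-1}$ into $M(|u|^{\alpha-1})(x)$. Once the pointwise bound is established, Hölder's inequality with exponents $r/(\alpha-1)$ and $q$ (satisfying $(\alpha-1)/r+1/q=1/p$), together with the $L^p$-boundedness of $M$ and the Fefferman--Stein inequality (Proposition \ref{2_21_21}) applied to $g_\delta(u)$, yield \eqref{3_11_3}.

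The main obstacle will be the pointwise-majorization step: controlling the averaged difference $|u(x)-u(y)|$ against $h_\nu$ by a quantity whose $L^q$ norm is bounded by $\|{\mathcal L}^{\delta/2}u\|_q$. On $\mathbb{R}^n$ this is handled transparently by Littlewood--Paley theory, but on a higher-step $\G$ one must argue directly with $h_\nu$ and the anisotropic Carnot--Carath\'eodory geometry. In particular, the off-diagonal contribution carrying $|u(y)|^{\alpha-1}$ requires symmetrizing the integrand and exploiting that $h_\nu(y^{-1}\cdot x)$ is comparable, in the sense of Lemma \ref{bbb}, to a kernel centered at $y$, so that $M(|u|^{\alpha-1})$ may be extracted from the $y$-integral without losing the exponent balance $(\alpha-1)/r+1/q=1/p$.
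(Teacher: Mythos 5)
Your reduction of the general case to a low-order core case by peeling off derivatives and closing with the Leibniz rule (Proposition \ref{3_11_4}) and Gagliardo--Nirenberg (Proposition \ref{3_11_5}) is essentially the paper's induction on $l$ (the paper peels off one horizontal derivative $X_m$ at a time using $\|{\mathcal L}^{(\delta+1)/2}f\|_p\sim\sum_m\|{\mathcal L}^{\delta/2}X_mf\|_p$, so its core case is $0\le\delta\le1$ rather than your $[0,2)$, which also spares it the second-difference issues at $\delta=1$). The genuine gap is in your core step. After applying $|F(u(x))-F(u(y))|\le|u(x)-u(y)|(|u(x)|^{\alpha-1}+|u(y)|^{\alpha-1})$ inside the subordination formula, the quantity you must control is the \emph{linear} integral $\int_0^\infty\nu^{-\delta/2-1}A_\nu(x)\,d\nu$ with $A_\nu(x)=\int h_\nu(y^{-1}\cdot x)|u(x)-u(y)|\,dy$, and there is no square-function $g_\delta(u)$ with $\|g_\delta(u)\|_q\lesssim\|{\mathcal L}^{\delta/2}u\|_q$ that dominates it: the Strichartz-type characterizations of $\dot L^q_\delta$ control only the quadratic functional $\bigl(\int_0^\infty\nu^{-\delta}A_\nu^2\,d\nu/\nu\bigr)^{1/2}$, and passing from the linear to the quadratic integral by Cauchy--Schwarz costs a factor $\bigl(\int_0^\infty\nu^{-1}d\nu\bigr)^{1/2}=\infty$. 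This is exactly why the paper never estimates ${\mathcal L}^{\delta/2}F(u)$ pointwise in one stroke: it estimates each Littlewood--Paley block $\psi_j({\mathcal L})F(u)$ separately, obtains the convolution-type sums $\sum_{k<j}2^{k-j}a_k$ and $\sum_{k\ge j}a_k$ with the geometric gain $2^{k-j}$ coming from the cancellation $\int\Psi_j=0$ and Lemma \ref{2_21_2}, and only then recombines in $\ell^2$ via the discrete Hardy inequality (Lemma \ref{2263017}) and the identification $\dot F^\delta_{p2}\cong\dot L^p_\delta$ (Lemma \ref{7-2-1}). Your dyadic splitting of $\nu$ does not recover this $\ell^2$ structure because you have already discarded the frequency localization of $F(u)$.

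A second, smaller defect: the off-diagonal term carrying $|u(y)|^{\alpha-1}$ cannot be reduced to $M(|u|^{\alpha-1})(x)$ times a maximal average of $|u(x)-u(y)|$ by symmetrization, since the product $|u(x')-u(x)|\,|u(x')|^{\alpha-1}$ couples the two factors at the same off-center point $x'$. The paper handles it by keeping the composite maximal function $M\bigl[M[\psi_k({\mathcal L})u]H\bigr]$ with $H=|u|^{\alpha-1}$ and invoking the Fefferman--Stein vector-valued inequality (Proposition \ref{2_21_21}) twice; some such device would have to appear in your argument as well. The heat-semigroup subordination starting point is legitimate and the exponent bookkeeping $(\alpha-1)/r+1/q=1/p$ is right, but as written the proposal replaces the hardest step of the proof with an announced intention, and the specific majorization you announce is false.
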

\noindent
We will prove this lemma below, following Kato's work on ${\mathbb R}^n$ in \cite{Kato}.

We suppose that $\mathrm{max}\disp\left\{0,\disp{N}/{p}-\disp{N}/{\alpha}\right\}<s<N/p$ and N2) are fulfilled, that is, we suppose that $0<s_{\alpha}<\alpha-1$ and $|F^{(j)}(u)|\leq C|u|^{\alpha-j}$ for $\alpha>1$ and $j=1,\cdots,[\alpha$]. Let $u(x)\in L^p_s(\G)$, $1<p<\infty$.
By Proposition \ref{3_31_1} (iii), $\|F(u)\|_{L^p_{s_{\alpha}}}\sim\|F(u)\|_p+\|{\mathcal L}^{\frac{s_{\alpha}}{2}}F(u)\|_p$. So we will show the following estimates,
\[ \|F(u)\|_{p}\leq C\|u\|^{\alpha}_{L^p_s}\ \text{and}\ \|{\mathcal L}^{\frac{s_{\alpha}}{2}}F(u)\|_{p}\leq C\|u\|^{\alpha}_{L^p_s}.\]
At first, we will show the estimate $\|{\mathcal L}^{\frac{s_{\alpha}}{2}}F(u)\|_{p}\leq C\|u\|^{\alpha}_{L^p_s}$.
Since $0<s_{\alpha}<\alpha-1\leq [\alpha]$,   
by Lemma \ref{3_11_2} (the generalized chain rule) and Proposition \ref{embedding} (Sobolev embedding theorem), we have
\begin{align}
\|{\mathcal L}^{\frac{s_{\alpha}}{2}}F(u)\|_p 
\leq C\|u\|^{\alpha-1}_r\|{\mathcal L}^{\frac{s_{\alpha}}{2}}u\|_q
\leq C\|u\|^{\alpha-1}_{L^p_{\tau_1}}\|u\|_{L^p_{\tau_2}}
\label{2_22_31}
\end{align}
where 
\begin{align}
\frac{1}{p}=\frac{\alpha-1}{r}+\frac{1}{q},\label{2_26_5}
\end{align}
\begin{align}
\  \tau_1=N\left(\frac{1}{p}-\frac{1}{r}\right),\ \tau_2=s_{\alpha}+N\left(\frac{1}{p}-\frac{1}{q}\right). \label{2_26_1}
\end{align}
We take $r$ and $q$ such that $\tau_1=\tau_2$, that is, 
\begin{align}
N\left(\frac{1}{p}-\frac{1}{r}\right)=s_{\alpha}+N\left(\frac{1}{p}-\frac{1}{q}\right)(=\tilde{s}).\label{3_12_7}
\end{align}
By \eqref{2_26_5} and \eqref{3_12_7}, we have 
\begin{align}
\disp\frac{1}{q}=\disp\frac{1}{\alpha}\left\{\frac{1}{p}+(\alpha-1)\disp\frac{s_{\alpha}}{N}\right\},\ \disp\frac{1}{r} = \disp\frac{1}{\alpha}\left(\frac{1}{p}-\frac{s_{\alpha}}{N}\right)
\end{align}
and
\begin{align}
\tilde{s}=\disp\frac{1}{\alpha}\left\{s_{\alpha}+(\alpha-1)\frac{N}{p}\right\}.
\end{align}
Therefore by \eqref{2_22_31}, we obtain
\[\|{\mathcal L}^{\frac{s_{\alpha}}{2}}F(u)\|_p\leq C\|u\|_{L^p_{\tilde{s}}}^{\alpha}.\]
Hence we obtain \eqref{2121} for $s$ satisfying the following condition
\begin{align}
s\geq \disp\frac{1}{\alpha}\left\{s_{\alpha}+(\alpha-1)\frac{N}{p}\right\}.\label{2_26_11}
\end{align}
By \eqref{s_alpha}, we can see that \eqref{2_26_11} holds for the equal. 

Furthermore by a straightforward calculation, we can also see that 
$r$, $q$ are in $(1,\infty)$, respectively, and satisfy the condition for Sobolev embedding theorem,
$p< r,\ p< q$. 

Secondly, we will show the estimate $\|F(u)\|_p\leq C\|u\|_{L^p_s}^{\alpha}$. Indeed, by Proposition \ref{embedding} (Sobolev embedding theorem), we have
\begin{align}
\|F(u)\|_p &\leq C\|u\|^{\alpha}_{\alpha p}\leq C\|u\|^{\alpha}_{L^p_{\Xi}},
\end{align}
where $\Xi=N(1/p-1/(\alpha p))$. Since $s$ satisfies \eqref{2_26_11} with an equal and $s_{\alpha}>0$, we have $s> \Xi$. Hence we obtain \eqref{2121}. This completes the proof of Theorem \ref{th2}.\\
\hspace{13.3cm}$\square$

\section{The proof of \hbox{Lemma \ref{3_11_2}}}
\noindent

By the spectral theorem, the sublaplacian ${\mathcal L}$ on $\G$ satisfies a spectral resolution
\[{\mathcal L}=\disp\int_0^{\infty}\lambda dE_{\lambda},\]
where $dE_{\lambda}$ is the projection measure. If $\Theta$ is a bounded Borel measure function on ${\mathbb R}_+$, then the operator
\[\Theta({\mathcal L})=\disp\int_0^{\infty}\Theta({\lambda})dE_{\lambda}\]
is bounded on $L^2(\G)$. Furthermore by the Schwartz kernel theorem, there exists a tempered distribution kernel $K_{\Theta({{\mathcal L}})}$ on $\G$ such that 
\[\Theta({\mathcal L})f=f*K_{\Theta({{\mathcal L}})}\]
for any $f\in{\mathcal S}(\G)$. It is known that if $\Theta\in{\mathcal S}({\mathbb R}_+)$, then the distribution kernel $K_{\Theta({{\mathcal L}})}$ of the operator $\Theta({\mathcal L})$ belongs to ${\mathcal S}(\G)$
(see, \cite{Fur}, \cite{Hu} and \cite{Hul2}).
Furthermore, for any $\Theta\in{\mathcal S}({\mathbb R}_+)$ and $t>0$, it also holds that the kernel $K_{\Theta(t{{\mathcal L}})}$
of the operator $\Theta(t{\mathcal L})$ belongs to ${\mathcal S}(\G)$ (see \cite{Hu}). So it is well-defined that for any $f\in{\mathcal S}^{\prime}(\G)$,
\[\Theta(t{\mathcal L})f(x)=(f*K_{\Theta({{t\mathcal L}})})(x),\ x\in\G.\]

Following \cite{Hu}, we also recall the homogeneous Tribel-Lizorkin spaces $\dot{F}^s_{pq}$ on $\G$.
\begin{ttt}\label{7-1-1}\rm
Let $\varphi\in C^{\infty}({\mathbb R}_+)$ such that $\mathrm{supp}~\varphi\subset[1/4, 4]$, $|\varphi(\lambda)|\geq c>0$ for $\lambda\in[(3/5)^2, (5/3)^2]$. For $s\in{\mathbb R}$, $0<p<\infty$ and $0<q\leq \infty$, the homogeneous Triebel-Lizorkin space $\dot{F}^s_{pq}(\G)$ is defined as the set of all $f\in {\mathcal S}^{\prime}(\G)/{\mathcal P}$ for which 
\[\|f\|_{\dot{F}^s_{pq}}=\disp\left\|\left(\disp\sum_{j\in{\mathbb Z}}2^{jsq}|\varphi(2^{-2j}{{\mathcal L}})f|^q\right)^{\frac{1}{q}}\right\|_p<\infty,\]
where ${\mathcal P}$ denotes the space of all polynomials on $\G$.
\end{ttt}
\noindent
By the general theory developed in \cite{Hu}, it is known that the definition of the space $\dot{F}^s_{pq}(\G)$ is independent of the choice of $\varphi$, as long as $\varphi$ satisfy all the condition in Definition \ref{7-1-1}.

\begin{lem}\label{7-2-1}
Let $s\in{\mathbb R}$ and $1< p<\infty$. Then the following identification is valid:
\[\dot{F}^s_{p2}(\G)\cong\dot{L}^p_s(\G)\]
with equivalent norm $\|f\|_{\dot{F}^s_{p2}}\sim\|f\|_{\dot{L}^p_s}$.
\end{lem}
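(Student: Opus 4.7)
The plan is to reduce the identification to the classical Littlewood--Paley characterization of $L^p$ associated with the sublaplacian $\mathcal{L}$, via the spectral calculus trick that absorbs the weight $2^{js}$ into the multiplier.

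First, fix a function $\varphi$ as in Definition \ref{7-1-1} and set $\tilde{\varphi}(\lambda):=\lambda^{-s/2}\varphi(\lambda)$. Since $\mathrm{supp}\,\varphi\subset[1/4,4]$ is bounded away from $0$, the function $\tilde{\varphi}$ is still in $C^{\infty}(\mathbb{R}_+)$ with the same compact support, and $|\tilde{\varphi}(\lambda)|\geq c'>0$ on $[(3/5)^2,(5/3)^2]$, so $\tilde{\varphi}$ is an admissible test function in the sense of Definition \ref{7-1-1}. By functional calculus,
\[
2^{js}\varphi(2^{-2j}\mathcal{L})f = 2^{js}\,(2^{-2j}\mathcal{L})^{s/2}\tilde{\varphi}(2^{-2j}\mathcal{L})f = \tilde{\varphi}(2^{-2j}\mathcal{L})\,\mathcal{L}^{s/2}f.
\]
Substituting this into the definition of the Triebel--Lizorkin norm gives
\[
\|f\|_{\dot{F}^s_{p2}} = \left\|\left(\sum_{j\in\mathbb{Z}}\bigl|\tilde{\varphi}(2^{-2j}\mathcal{L})g\bigr|^2\right)^{1/2}\right\|_p, \qquad g:=\mathcal{L}^{s/2}f.
\]

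Next I would invoke the Littlewood--Paley equivalence for $\mathcal{L}$ at the level $s=0$: for $1<p<\infty$,
\[
\left\|\left(\sum_{j\in\mathbb{Z}}\bigl|\tilde{\varphi}(2^{-2j}\mathcal{L})g\bigr|^2\right)^{1/2}\right\|_p \sim \|g\|_p = \|\mathcal{L}^{s/2}f\|_p = \|f\|_{\dot{L}^p_s}.
\]
Combining this with the previous display yields $\|f\|_{\dot{F}^s_{p2}}\sim \|f\|_{\dot{L}^p_s}$, which gives the claimed isomorphism modulo polynomials.

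The main obstacle is the square-function equivalence at $s=0$. The $\lesssim$ direction is a vector-valued singular integral estimate: using Khintchine's inequality, it reduces to showing that for every choice of signs $\{\epsilon_j\}$, the multiplier $m(\lambda)=\sum_j\epsilon_j\tilde{\varphi}(2^{-2j}\lambda)$ defines an $L^p$-bounded operator $m(\mathcal{L})$ uniformly in the signs; this follows from a Mihlin--H\"ormander multiplier theorem for the sublaplacian on $\G$ (available in the framework of \cite{Hu}, \cite{Hul2}, \cite{Fur}), since $m$ satisfies the standard symbol estimates of order zero. The reverse inequality follows from a Calder\'on-type reproducing formula: pick an auxiliary $\eta\in C_c^{\infty}(\mathbb{R}_+)$ with $\sum_j\tilde{\varphi}(2^{-2j}\lambda)\eta(2^{-2j}\lambda)=1$ on $(0,\infty)$ and use duality with a second square function in $\dot{F}^0_{p'2}$, together with the already established upper bound applied to the dual exponent. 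All of these steps are part of the general Triebel--Lizorkin theory developed in \cite{Hu} for homogeneous groups, so the proof in practice consists of verifying that $\tilde{\varphi}$ meets the hypotheses of that framework and quoting the resulting equivalence of norms.
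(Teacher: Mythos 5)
Your proposal is correct and follows essentially the same route as the paper: the paper simply cites the lifting property of $\dot{F}^s_{pq}(\G)$ together with the identification $\dot{F}^0_{p2}(\G)=L^p(\G)$ from \cite{Hu}, and your substitution $\tilde{\varphi}(\lambda)=\lambda^{-s/2}\varphi(\lambda)$ is precisely the standard proof of that lifting property, after which you quote the same $s=0$ Littlewood--Paley equivalence. The only difference is that you unpack these two cited facts in more detail; the underlying argument is identical.
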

\begin{proof}
By the lifting property of $\dot{F}^s_{pq}(\G)$ and the fact $\dot{F}^0_{p2}(\G)=L^p(\G)$ for $1<p<\infty$ ($\cite{Hu}$, Theorem 13 and Corollary 15), we have
\begin{align}
\|f\|_{\dot{F}^s_{p2}}\sim \|{\mathcal L}^{\frac{s}{2}}f\|_{p}=\|f\|_{\dot{L}^p_s}
\end{align}
for $1<p<\infty$.
\end{proof}

We give the fractional chain rule on $\G$ following in \cite{Chr}.
Let $\psi\in C^{\infty}({\mathbb R}_+)$ satisfying the same condition for $\varphi$ in Definition \ref{7-1-1} and $\sum_{j\in{\mathbb Z}}\psi_j(\lambda)=1$, where $\psi_j(\lambda)=\psi(2^{-2j}\lambda)$. Then 
there exists a function $\Psi\in S(\G)$ satisfying the following conditions:
\begin{enumerate}
\item[\rm{(i)}]  $\psi({\mathcal L})f(x)=(f*\Psi)(x)$ for any $f\in \dot{F}^s_{pq}$,
\item[\rm{(ii)}] for any $l\in{\mathbb N}$, there exists a constant $C_l>0$ such that
\begin{align}
|\Psi(x)|+|X\Psi(x)|\leq C_l(1+|x|_{\G})^{-l}, \label{2_22_111}
\end{align}
\item[\rm{(iii)}] \begin{align}
 \disp\int_{\G}\Psi(x)~dx=0.
 \end{align}
\end{enumerate}
\noindent
Put $\Psi_j(x)=2^{Nj}\Psi(\delta_{2^j}(x))$. Then $\Psi_j$ satisfies 
\begin{align}
|\Psi_j(x)|+2^{-j}|X\Psi_j(x)|\leq C_l2^{Nj}(1+2^j|x|_{\G})^{-l}\label{2_22_1}
\end{align}
for any $l\in{\mathbb N}$ and 
 \begin{align}
 \disp\int_{\Hei}\Psi_j(x)~dx=0.\label{2_22_2}
 \end{align}
Moreover we can take $\tilde\psi\in C^{\infty}({\mathbb R}_+)$ such that $\mathrm{supp}~\tilde\psi\subset[2^{-4},2^4]$ and $\tilde\psi\psi=\psi$. Then we have $I=\sum_{j=-\infty}^{\infty}\psi_j=\sum_{j=-\infty}^{\infty}\tilde\psi_j\psi_j$, where $\psi_j(\lambda)=2^{Nj}\psi(\delta_{2^j}(\lambda))$. The kernel $\tilde\Psi_j(x)$ of the operator $\tilde\psi_j({\mathcal L})$ satisfies \eqref{2_22_1} and \eqref{2_22_2}. Let $\eta_{j,l}(x)=2^{Nj}(1+2^{j}|x|_{\G})^{-l}$ for $l\in{\mathbb N}$. Then the decreasing function $\eta_{j,l}$ is radial. 

We need the following lemma
to prove the fractional chain rule on $\G$.
\begin{lem}\label{2_21_2}
For any $h$, there exist positive constants $C$ and $D$ such that 
\begin{align}
|\tilde{\psi}_j({\mathcal L})h(x^{\prime})-\tilde{\psi}_j({\mathcal L})h(x)|\leq C
\begin{cases}
2^j|{x^{\prime}}^{-1}\cdot x|_\G M[h](x),\ \text{if}\ |{x^{\prime}}^{-1}\cdot x|_{\G}\leq D2^{-j},\\
M[h](x)+M[h](x^{\prime})\ \text{for\ any}\ x,x^{\prime}\in\G,
\end{cases}
\end{align}
where the operator $M$ is the maximal operator.
\end{lem}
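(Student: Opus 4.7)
The plan is to return to the convolution representation $\tilde\psi_j({\mathcal L})h(x) = (h*\tilde\Psi_j)(x)$ and exploit the decay estimates \eqref{2_22_1} for $\tilde\Psi_j$ and $X\tilde\Psi_j$, together with the radial decreasing majorant $\eta_{j,l}$. For the unconditional bound $|\tilde{\psi}_j({\mathcal L})h(x^{\prime}) - \tilde{\psi}_j({\mathcal L})h(x)|\leq C(M[h](x)+M[h](x^{\prime}))$, it suffices to prove the pointwise inequality $|(h*\tilde\Psi_j)(z)|\leq C\,M[h](z)$ and apply the triangle inequality. This inequality follows from \eqref{2_22_1} with $l>N$: $\tilde\Psi_j$ is dominated by $\eta_{j,l}$, whose $L^1$-norm is independent of $j$ (the dilation substitution $z\mapsto\delta_{2^{-j}}(z)$ removes the $j$-dependence), and the standard layer-cake decomposition of a radial decreasing integrable majorant into averages over balls yields $|h|*\eta_{j,l}(z) \leq C\,M[h](z)$.

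For the sharper estimate when $|{x^{\prime}}^{-1}\cdot x|_\G \leq D\cdot 2^{-j}$, the key step is the kernel inequality
\begin{align}
|\tilde\Psi_j(u\cdot v) - \tilde\Psi_j(u)| \leq C\cdot 2^j\,|v|_\G\,\eta_{j,l}(u), \quad u\in\G,\ |v|_\G \leq D\cdot 2^{-j}. \label{plan:kernel}
\end{align}
Once this is in hand, setting $v = x^{-1}\cdot x^{\prime}$ (so $|v|_\G = |{x^{\prime}}^{-1}\cdot x|_\G\leq D\cdot 2^{-j}$) and changing variable $z = y^{-1}\cdot x$ in the convolution representation gives
\[\tilde\psi_j({\mathcal L})h(x^{\prime}) - \tilde\psi_j({\mathcal L})h(x) = \int_{\G} h(x\cdot z^{-1})\bigl[\tilde\Psi_j(z\cdot v) - \tilde\Psi_j(z)\bigr]\,dz,\]
and \eqref{plan:kernel} majorises the right hand side by $C\cdot 2^j|v|_\G\,(|h|*\eta_{j,l})(x) \leq C\cdot 2^j|{x^{\prime}}^{-1}\cdot x|_\G\,M[h](x)$, which is the desired bound.

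To prove \eqref{plan:kernel} I would use a mean value argument along a horizontal curve. Since $\G$ is a stratified group, $u$ and $u\cdot v$ are joined by a horizontal curve $\gamma$ of length comparable to $|v|_\G$, and integrating $\tilde\Psi_j$ along $\gamma$ bounds its increment by $C|v|_\G\sup_{\xi\in\gamma}|X\tilde\Psi_j(\xi)|$. When $|u|_\G\geq 2|v|_\G$, every $\xi\in\gamma$ satisfies $|\xi|_\G\geq |u|_\G/2$, so the bound $|X\tilde\Psi_j(\xi)|\leq C_l\cdot 2^j\eta_{j,l}(\xi)$ from \eqref{2_22_1}, together with the elementary comparison $\eta_{j,l}(\xi)\leq 2^l\eta_{j,l}(u)$, yields \eqref{plan:kernel}. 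When $|u|_\G<2|v|_\G\leq 2D\cdot 2^{-j}$, I apply the crude bound $|\tilde\Psi_j(u\cdot v)|+|\tilde\Psi_j(u)|\leq C\cdot 2^{Nj}$; because $2^j|u|_\G\leq 2D$ forces $\eta_{j,l}(u)\geq(1+2D)^{-l}\cdot 2^{Nj}$ and $2^j|v|_\G\leq D$, the crude bound is absorbed into \eqref{plan:kernel} after enlarging the constant (depending on $D$ and $l$).

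The principal subtlety is the mean value estimate along a horizontal curve in $\G$, which uses the Carnot--Carath\'eodory connectivity of the group and the left-invariance of the horizontal vector fields. The rest of the argument is bookkeeping with the decay estimates \eqref{2_22_1} and with the standard maximal-function control of convolution against a radial decreasing integrable kernel.
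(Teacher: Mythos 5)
Your route is essentially the paper's: the unconditional branch is reduced, exactly as in the text, to the pointwise bound $|h|*\eta_{j,l}(x)\lesssim M[h](x)$ for a radial decreasing integrable majorant, and the sharp branch rests on the kernel increment estimate $|\tilde\Psi_j(u\cdot v)-\tilde\Psi_j(u)|\lesssim 2^{j}|v|_{\G}\,\eta_{j,l}(u)$ for $|v|_{\G}\leq D2^{-j}$, obtained from the stratified mean value theorem and the decay of $X\tilde\Psi_j$; this is the paper's estimate written in the variables $u=\omega^{-1}\cdot x$. All of that is sound.

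There is, however, one step that fails as written: your treatment of the regime $|u|_{\G}<2|v|_{\G}$. There you replace the increment by the crude bound $|\tilde\Psi_j(u\cdot v)|+|\tilde\Psi_j(u)|\leq C2^{Nj}$ and claim it is absorbed into $C2^{j}|v|_{\G}\eta_{j,l}(u)$ after enlarging the constant. It cannot be: the target right-hand side is proportional to $|v|_{\G}$ and tends to $0$ as $v\to e$ (take $u=e$, say), whereas $C2^{Nj}$ does not, so no constant depending only on $D$ and $l$ closes the inequality; the condition $2^{j}|v|_{\G}\leq D$ is an upper bound on $2^{j}|v|_{\G}$, not the lower bound you would need. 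The repair is immediate, in either of two ways. (a) Keep the mean value bound in this regime too and use the uniform estimate $|X\tilde\Psi_j|\leq C2^{(N+1)j}$, which gives an increment at most $C2^{(N+1)j}|v|_{\G}=C2^{j}|v|_{\G}2^{Nj}\leq C(1+2D)^{l}\,2^{j}|v|_{\G}\,\eta_{j,l}(u)$, since $2^{j}|u|_{\G}\leq 2D$ there. (b) Better, dispense with the case split altogether, which is what the paper does: after rescaling to $j=0$, the hypothesis $|v|_{\G}\leq D$ with $D$ chosen so small that $bD<1/2$ forces every point $\xi$ on the connecting horizontal curve to satisfy $1+|\xi|_{\G}\geq \frac{1}{2}+|u|_{\G}$, hence $\eta_{0,l}(\xi)\leq 2^{l}\eta_{0,l}(u)$ for \emph{all} $u$, and a single mean-value estimate covers everything. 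This is exactly where the freedom to choose the constant $D$ in the statement is used.
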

\begin{proof}
For any $x,x^{\prime}\in\G$, we have 
\begin{align}
|\tilde{\psi}_j({\mathcal L})h(x)| &\leq \disp\int_{\G}|h(\omega)||\tilde{\Psi}_j(\omega^{-1}\cdot x)|~d\omega\\
                                                         &\leq C(|h|*\eta_{j,N+2})(x).
\end{align}
If the inequality 
\begin{align}
|h|*\eta_{j,N+2}(x)&\leq CM[h](x).  \label{2_21_3}
\end{align}
holds, then we obatin for any $x,x^{\prime}\in\G$, 
\[|\tilde{\psi}_j({\mathcal L})h(x^{\prime})-\tilde{\psi}_j({\mathcal L})h(x)|\leq C(M[h](x)+M[h](x^{\prime})).\]
Now we prove \eqref{2_21_3} (following in \cite{Ste2}, pp. 62--63). Let $f=|h|$. Then by the polar coordinate transformation, we have
\begin{align}
\disp\int_{|x|_{\G}\leq r}f(x)~dx &= \disp\int_0^r\left\{\disp\int_{{\mathbb S}}f(\rho y)d\alpha(y)\right\}\rho^{N-1}~d\rho\\
                                                     &=\disp\int_0^r\lambda(\rho)\rho^{N-1}~d\rho \label{2_21_4}
                                                     =: \Lambda(r),
\end{align}
where $\lambda(\rho)=\disp\int_{{\mathbb S}}f(\rho y)d\alpha(y)$. On the other hand, we obtain
\begin{align}
\disp\int_{|x|_{\G}\leq r}f(x)~dx &\leq  |B(0,1)|r^NM[f](0). \label{2_21_5}
\end{align}
By \eqref{2_21_4} and \eqref{2_21_5}, we have
\begin{align}
\Lambda(r)\leq |B(0,1)|r^NM[f](0). \label{2_21_6} 
\end{align}
Hence by \eqref{2_21_6}, we can see that
\begin{align}
\Lambda(A)\eta_{j,N+2}(A)\rightarrow 0 \label{2_21_7}
\end{align} 
as $A\rightarrow \infty$ and
\begin{align}
\Lambda(\varepsilon)\eta_{j,N+2}(\varepsilon)\rightarrow 0 \label{2_21_8}
\end{align} 
as $\varepsilon\rightarrow 0$.
Since $\eta_{j,N+2}({x^{\prime}}^{-1})=\eta_{j,N+2}(x^{\prime})$ and $\eta_{j,N+2}$ is radial, by \eqref{2_21_6}, \eqref{2_21_7} and \eqref{2_21_8}, we have
\begin{align}
(f*\eta_{j,N+2})(0) &= \disp\int_{\G}f(x^{\prime})\eta_{j,N+2}(x^{\prime})~dx^{\prime}\\
                      &= \disp\int_0^{\infty}\lambda(\rho)\eta_{j,N+2}(\rho)\rho^{N-1}~d\rho\\
                      &= \disp\lim_{A\rightarrow\infty\atop\varepsilon\rightarrow 0}\disp\int_{\varepsilon}^A\lambda(\rho)\eta_{j,N+2}(\rho)\rho^{N-1}~d\rho\\
                      &= \disp\lim_{A\rightarrow\infty\atop\varepsilon\rightarrow 0}\disp\left\{\left(\Lambda(A)\eta_{j,N+2}(A)-\Lambda(\varepsilon)\eta_{j,N+2}(\varepsilon)\right)-\disp\int_{\varepsilon}^{A}\Lambda(\rho)\eta_{j,N+2}^{\prime}(\rho)~d\rho\right\}\\
                      &= \disp\int_{0}^{\infty}\Lambda(\rho)\left(-\eta_{j,N+2}^{\prime}(\rho)\right)~d\rho\\
                      &\leq |B(0,1)|M[f](0)\disp\int_0^{\infty}\rho^N|\eta^{\prime}_{j,N+2}(\rho)|~d\rho\\
                      &\leq C_NM[f](0).
\end{align}
Put $L_af(g):=f(ag)$ for $a,g\in\G$. Then we have
\[(f*\eta_{j,N+2})(x)=(L_xf)*\eta_{j,N+2}(0)\leq C_NM[L_xf](0)=C_NL_xM[f](0)=C_NM[f](x).\]
Hence we have proved \eqref{2_21_3}. 

We proceed to the case $|{x^{\prime}}^{-1}\cdot x|_{\G}\leq D2^{-j}$ for some $D>0$. Then we have
\begin{align}
&|\tilde\psi_j({\mathcal L})h(x^{\prime})-\tilde\psi_j({\mathcal L})h(x)|\\
 &\leq \disp\int_{\G}|h(\omega)||\tilde{\Psi}_j(\omega^{-1}\cdot x^{\prime})-\tilde{\Psi}_j(\omega^{-1}\cdot x)|~d\omega\\
        &\leq C_N2^{(N+1)j}|{x^{\prime}}^{-1}\cdot x|_{\G}\disp\int_{\G}|h(\omega)|(1+2^j|\omega^{-1}\cdot x|_{\G})^{-(N+2)}~d\omega\\
        &\leq C_N 2^{j}|{x^{\prime}}^{-1}\cdot x|_{\G}M[h](x).
\end{align}
In the second- to -last inequality, we have invoked the following estimate: There exists a constant $D>0$ such that if  $|{x^{\prime}}^{-1}\cdot x|_{\G}\leq D2^{-j}$, then for any $l\in{\mathbb N}$, there exists a constant $C_l>0$ such that 
\begin{align}
&|\tilde{\Psi}_j(\omega^{-1}\cdot x^{\prime})-\tilde{\Psi}_j(\omega^{-1}\cdot x)|\\
&\leq C_l2^{(N+1)j}|{x^{\prime}}^{-1}\cdot x|_{\G}(1+2^j|\omega^{-1}\cdot x|_{\G})^{-l}.\label{2_21_9}
\end{align}
We will show the estimate \eqref{2_21_9}.  First, assume the case $j=0$. Put $y=\omega^{-1}\cdot x$ and $z={x^{\prime}}^{-1}\cdot x$. Then by mean value theorem (see \cite{Fol}), there exist positive constants $C$ and $b$ such that for any $y,z\in\G$, 
\begin{align}
|\tilde\Psi(y\cdot z)-\tilde\Psi(y)| &\leq C|z|_{\G}\disp\sup_{|g|_{\G}\leq b|z|_{\G}}|X\tilde\Psi(y\cdot g)|\\
                                                &\leq CC_l|z|_{\G}\disp\sup_{|g|_{\G}\leq b|z|_{\G}}(1+|y\cdot g|_{\G}))^{-l},\label{2_21_10}
\end{align} 
where $C_l$ appears in \eqref{2_22_111}.
By the assumption $|z|_{\G}\leq D$ for some $D>0$, we have $|g|_{\G}\leq bD$. By taking $D>0$ such that $bD<1/2$, we have 
\begin{align}
1+|y\cdot g|_{\G}\geq 1+|y|_{\G}-|g|_{\G}\geq 1+|y|_{\G}-bD>\disp\frac{1}{2}+|y|_{\G}. 
\end{align}
Hence we obtain for any $l\in{\mathbb N}$, 
\begin{align}
(1+|y\cdot g|_{\G})^{-l}\leq C_l(1+|y|_{\G})^{-l}.\label{2_21_11}
\end{align}
By \eqref{2_21_10} and \eqref{2_21_11}, we have for $|z|_{\G}\leq D$,
\begin{align}
|\tilde\Psi(y\cdot z)-\tilde\Psi(y)| \leq C_l|z|_{\G}(1+|y|_{\G})^{-l}.\label{2_21_12}
\end{align}

Next we assume $j\neq 0$. Since $\delta_{2^j}\in {\mathrm {Aut}}(\G)$, by \eqref{2_21_12}, we have for $|\delta_{2^j}(z)|_{\G}\leq D$, 
\begin{align}
|\tilde\Psi_j(y\cdot z)-\tilde\Psi_j(y)| &= 2^{Nj}|\tilde\Psi(\delta_{2^j}(y\cdot z))-\tilde\Psi(\delta_{2^j}(y))|\\
                                                      &= 2^{Nj}|\tilde\Psi(\delta_{2^j}(y)\delta_{2^j}(z))-\tilde\Psi(\delta_{2^j}(y))|\\
                                                      &\leq C_l2^{Nj}|\delta_{2^j}(z)|_{\G}(1+|\delta_{2^j}(y)|_{\G})^{-l}. \label{2_21_13}
\end{align}
Since $|\delta_{2^j}(a)|_{\G}=2^j|a|_{\G}$ for $a\in\G$, by \eqref{2_21_13}, we can obtain the desired estimate.
\end{proof}

Next, we will show the following fractional chain rule on $\G$ (we refer to \cite{Chr}, \cite{Staff}, \cite{Tay} and so on).
\begin{prop}\label{3_11_1}
Assume that $F\in C^1$ with
\[F(0)=0,\ |F^{\prime}(x)|\leq C|x|^{\alpha-1},\ \alpha>1,\]
$0\leq\delta\leq 1$, $1<p,q<\infty$, $1<r\leq \infty$ and $p^{-1}=(\alpha-1)r^{-1}+q^{-1}$. If $u\in L^r(\G)\cap\dot{L}^q_{\delta}(\G)$,  then ${\mathcal L}^{\frac{\delta}{2}}F(u)\in L^p(\G)$ and 
\begin{align}
\|{\mathcal L}^{\frac{\delta}{2}}F(u)\|_p \leq C\|u\|_r^{\alpha-1}\|{\mathcal L}^{\frac{\delta}{2}}u\|_q.\label{aaaaaaa}
\end{align}
\end{prop}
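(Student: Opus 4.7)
The plan is to adapt the Fourier-analytic Christ--Weinstein argument on $\mathbb{R}^n$ (\cite{Chr}) to the stratified setting, using the Triebel--Lizorkin characterization of $\dot{L}^p_{\delta}(\G)$ recorded in Lemma \ref{7-2-1} and the two-scale dichotomy for $\tilde\psi_j({\mathcal L})$ provided by Lemma \ref{2_21_2}. First I would reduce the target \eqref{aaaaaaa} to a square-function estimate: by Lemma \ref{7-2-1} it suffices to prove
\[
\left\|\left(\sum_{j\in\mathbb Z}2^{2j\delta}|\varphi_j({\mathcal L})F(u)|^2\right)^{1/2}\right\|_p \lesssim \|u\|_r^{\alpha-1}\|{\mathcal L}^{\delta/2}u\|_q,
\]
where $\varphi_j(\lambda)=\varphi(2^{-2j}\lambda)$ is the Littlewood--Paley cutoff from Definition \ref{7-1-1}.

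Next I would rewrite $\varphi_j({\mathcal L})F(u)(x) = (F(u)\ast\Psi_j)(x)$ and exploit the cancellation $\int_{\G}\Psi_j=0$ (property (iii)) to obtain
\[
\varphi_j({\mathcal L})F(u)(x)=\int_{\G}\bigl[F(u(x\cdot\omega^{-1}))-F(u(x))\bigr]\Psi_j(\omega)\,d\omega.
\]
The mean value theorem together with $F(0)=0$ and $|F'(t)|\le C|t|^{\alpha-1}$ bounds the integrand by $C|u(x\omega^{-1})-u(x)|\bigl(|u(x\omega^{-1})|^{\alpha-1}+|u(x)|^{\alpha-1}\bigr)$. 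The key pointwise goal is then a Littlewood--Paley inequality of the shape
\[
\left(\sum_{j}2^{2j\delta}|\varphi_j({\mathcal L})F(u)(x)|^2\right)^{1/2}\lesssim M\bigl[|u|^{\alpha-1}\bigr](x)\cdot\left(\sum_{j}2^{2j\delta}\bigl|M[\tilde\varphi_j({\mathcal L})u](x)\bigr|^2\right)^{1/2},
\]
obtained by splitting the $\omega$-integral at $|\omega|_{\G}\sim D2^{-j}$: on $|\omega|_{\G}\lesssim 2^{-j}$ one gains the factor $2^{j}|\omega|_{\G}$ from Lemma \ref{2_21_2} after reproducing $u$ via the identity $I=\sum_k \tilde\psi_k\psi_k$, while on $|\omega|_{\G}\gtrsim 2^{-j}$ one uses the uniform bound together with the rapid decay $|\Psi_j(\omega)|\le C_l\eta_{j,l}(\omega)$ from property (ii).

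Finally, Hölder's inequality with exponents matched to $p^{-1}=(\alpha-1)r^{-1}+q^{-1}$, combined with the $L^{r/(\alpha-1)}$ boundedness of $M$ (Proposition \ref{2_21_20}, applicable since $r/(\alpha-1)>1$) and the Fefferman--Stein vector-valued maximal inequality (Proposition \ref{2_21_21}) on $L^q$, yields
\[
\bigl\|M[|u|^{\alpha-1}]\bigr\|_{r/(\alpha-1)}\cdot \left\|\left(\sum_j 2^{2j\delta}|\tilde\varphi_j({\mathcal L})u|^2\right)^{1/2}\right\|_q \lesssim \|u\|_r^{\alpha-1}\|{\mathcal L}^{\delta/2}u\|_q,
\]
where the last factor is re-identified with $\|{\mathcal L}^{\delta/2}u\|_q$ through Lemma \ref{7-2-1} applied to $u$.

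The hard part will be the pointwise square-function inequality in the third paragraph: controlling $|\varphi_j({\mathcal L})F(u)(x)|$ by the scalar maximal function $M[|u|^{\alpha-1}]$ times a Littlewood--Paley piece of $u$ itself, uniformly in $j$. This is where the mean value theorem, the two regimes of Lemma \ref{2_21_2}, and the decay of $\Psi_j$ must be combined; the non-commutativity of $\G$ forces one to work with the homogeneous distance $|\cdot|_{\G}$ and group convolutions rather than ordinary translations, and the group-ball maximal operator $M$ is precisely what lets the Euclidean argument transplant. Once this pointwise bound is in hand, the remainder of the proof is a direct application of Propositions \ref{2_21_20}, \ref{2_21_21} and Lemma \ref{7-2-1}.
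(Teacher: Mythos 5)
Your overall strategy is the same as the paper's: reduce to the Triebel--Lizorkin square function via Lemma \ref{7-2-1}, use $\int_{\G}\Psi_j=0$ and the mean value theorem, reproduce $u$ through $I=\sum_k\tilde\psi_k({\mathcal L})\psi_k({\mathcal L})$, control the differences with Lemma \ref{2_21_2}, and close with H\"older, Proposition \ref{2_21_20} and Proposition \ref{2_21_21}. However, two concrete points in your sketch of the ``hard part'' would not survive being written out. First, the dichotomy must be taken at the scale $2^{-k}$ of the reproduced piece $\tilde\psi_k({\mathcal L})\psi_k({\mathcal L})u$, not at the scale $2^{-j}$ of $\Psi_j$: Lemma \ref{2_21_2} gives the gain $2^{k}|{x'}^{-1}\cdot x|_{\G}$ only for $|{x'}^{-1}\cdot x|_{\G}\le D2^{-k}$, and for $k<j$ the paper obtains the crucial factor $2^{k-j}$ from $\int|{x'}^{-1}\cdot x|_{\G}|\Psi_j|\,dx'\lesssim 2^{-j}$ on the inner region and from one extra power of decay of $\Psi_j$ (via $(1+2^j|{x'}^{-1}\cdot x|_{\G})^{-1}\le 2^{k-j}$) on the outer region $|{x'}^{-1}\cdot x|_{\G}\ge 2^{-k}$. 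Splitting at $2^{-j}$ gives no gain on the outer region, since $\int_{|\omega|_\G\ge 2^{-j}}|\Psi_j|\sim 1$, and the sum over $k<j$ then fails to converge. Moreover, the factors $2^{k-j}$ (for $k<j$) and $1$ (for $k\ge j$) must be summed by the discrete lemma (Lemma \ref{2263017}), which is exactly where the hypothesis $0<\delta<1$ enters; your proposal never identifies this, nor that the endpoints $\delta=0$ and $\delta=1$ are treated separately (by $F(0)=0$ and by the ordinary chain rule with $\|{\mathcal L}^{1/2}f\|_p\sim\sum_m\|X_mf\|_p$, respectively).

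Second, your factorized pointwise bound $M[|u|^{\alpha-1}](x)\cdot\bigl(\sum_j2^{2j\delta}|M[\tilde\varphi_j({\mathcal L})u](x)|^2\bigr)^{1/2}$ is too strong. The mean value theorem produces $H(x')+H(x)$ with $H=|u|^{\alpha-1}$, and for the term carrying $H(x')$ \emph{inside} the convolution integral one only obtains the composite object $M\bigl[M[\psi_k({\mathcal L})u]\,H\bigr](x)$, which is not pointwise dominated by $M[H](x)\,M\bigl[M[\psi_k({\mathcal L})u]\bigr](x)$. The paper keeps this composite term, applies the Fefferman--Stein inequality in $L^p$ (not $L^q$) to remove the outer $M$, and only afterwards separates $H$ from the square function of $u$ by H\"older. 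With these two corrections your argument becomes the paper's proof; as written, the central pointwise inequality you propose is not attainable by the means you cite.
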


\begin{proof}
When $\delta=1$, since $\|{\mathcal L}^{\frac{1}{2}}f\|_p\sim \sum_{j=1}^k\|X_jf\|_p$ (see \cite{Cou}, Appendix 3), we have
\begin{align}
\|{\mathcal L}^{\frac{1}{2}}F(u)\|_p &\sim \disp\sum_{j=1}^k\|X_jF(u)\|_p\\
                                                      &\leq \disp\sum_{j=1}^k\|F^{\prime}(u)\|_{r/(\alpha-1)}\|X_ju\|_q\\
                                                      &\leq C\|u\|_r^{\alpha-1}\|{\mathcal L}^{\frac{1}{2}}u\|_q. 
\end{align}
When $\delta=0$, by $F(0)=0$, we have
\begin{align}
|F(u)|=|F(u)-F(0)|\leq C|u|^{\alpha}.
\end{align}
So we obtain
\begin{align}
\|F(u)\|_p\leq C\||u|^{\alpha}\|_p\leq C\||u|^{\alpha-1}\|_{r/(\alpha-1)}\|u\|_q\leq C\|u\|_r^{\alpha-1}\|u\|_q.
\end{align}

Now we will prove \eqref{aaaaaaa} for $0<\delta<1$. Since $\int_{\G}\Psi_j(x)~dx=0$, we have
\begin{align}
\psi_j({\mathcal L})F(u)(x) &= \disp\int_{\G}F(u)(x^{\prime})\Psi_j({x^{\prime}}^{-1}\cdot x)~dx^{\prime}\\
                                         &= \disp\int_{\G}(F(u)(x^{\prime})-F(u)(x))\Psi_j({x^{\prime}}^{-1}\cdot x)~dx^{\prime}\\
                                         &= \disp\int_{\G}\left(\disp\int_0^1F^{\prime}(\theta u(x^{\prime})+(1-\theta)u(x))~d\theta\right)\times \\
                                         &\hspace{2.0cm}(u(x^{\prime})-u(x))\Psi_j({x^{\prime}}^{-1}\cdot x)~dx^{\prime}.\label{2_21_14}
\end{align}
Note that 
\begin{align}
\left|F^{\prime}(\theta u(x^{\prime})+(1-\theta)u(x))\right|
&\leq C|\theta u(x^{\prime})+(1-\theta)u(x)|^{\alpha-1}\\[5pt]
&\leq C(|u(x^{\prime})|^{\alpha-1}+|u(x)|^{\alpha-1})\\[5pt]
&= C(H(x^{\prime})+H(x)),
\end{align}
where we put $H(y)=|u(y)|^{\alpha-1}>0$. 
Since we can decompose
\[u=\disp\sum_{k=-\infty}^{\infty}\psi_k({\mathcal L})u=\disp\sum_{k=-\infty}^{\infty}\tilde\psi_k({\mathcal L})\psi_k({\mathcal L})u,\]
by \eqref{2_21_14}, we have
\begin{align}
&|\psi_j({\mathcal L})F(u)(x)| \\
&\leq CH(x)\disp\sum_{k=-\infty}^{\infty}\disp\int_{\G}|\tilde\psi_k({\mathcal L})\psi_k({\mathcal L})u(x^{\prime})-\tilde\psi_k({\mathcal L})\psi_k({\mathcal L})u(x)||\Psi_j({x^{\prime}}^{-1}\cdot x)|~dx^{\prime}\\
&+ \disp\sum_{k=-\infty}^{\infty}\disp\int_{\G}|\tilde\psi_k({\mathcal L})\psi_k({\mathcal L})u(x^{\prime})-\tilde\psi_k({\mathcal L})\psi_k({\mathcal L})u(x)||\Psi_j({x^{\prime}}^{-1}\cdot x)|H(x^{\prime})~dx^{\prime}.\\
\label{2263015}
\vspace{0.3cm}
\end{align}
\noindent

We assume that $G=1$ or $G=H$. Break the sum over $k$ into the case $k<j$ and $k\geq j$. Then we have
\begin{align}
&\disp\sum_{k=-\infty}^{\infty}\disp\int_{\G}|\tilde\psi_k({\mathcal L})\psi_k({\mathcal L})u(x^{\prime})-\tilde\psi_k({\mathcal L})\psi_k({\mathcal L})u(x)||\Psi_j({x^{\prime}}^{-1}\cdot x)|G(x')~dx^{\prime}\\
&= \disp\sum_{k<j}\disp\int_{\G}|\tilde\psi_k({\mathcal L})\psi_k({\mathcal L})u(x^{\prime})-\tilde\psi_k({\mathcal L})\psi_k({\mathcal L})u(x)||\Psi_j({x^{\prime}}^{-1}\cdot x)|G(x')~dx^{\prime}\\
&+\disp\sum_{k\geq j}\disp\int_{\G}|\tilde\psi_k({\mathcal L})\psi_k({\mathcal L})u(x^{\prime})-\tilde\psi_k({\mathcal L})\psi_k({\mathcal L})u(x)||\Psi_j({x^{\prime}}^{-1}\cdot x)|G(x')~dx^{\prime}\\
&= {\rm{I}}_{1,j}+{\rm{I}}_{2,j}.\label{226301}
\vspace{0.3cm}
\end{align}
\noindent
\underline{Estimate for ${\rm{I}}_{1,j}$}\\

For $k<j$, by Lemma \ref{2_21_2}, we have
\begin{align}
&\disp\int_{|x'^{-1}\cdot x|_{\G}\leq 2^{-k}}|\tilde\psi_k({\mathcal L})\psi_k({\mathcal L})u(x^{\prime})-\tilde\psi_k({\mathcal L})\psi_k({\mathcal L})u(x)||\Psi_j({x'}^{-1}\cdot x)|G(x')~dx'\\
&\leq C2^kM[\psi_k({\mathcal L})u](x)\disp\int_{|x'^{-1}\cdot x|_{\G}\leq 2^{-k}}|x'^{-1}\cdot x|_{\G}|\Psi_j(x'^{-1}\cdot x)|G(x')~dx'\\
&\leq C2^{k-j}M[\psi_k({\mathcal L})u](x)M[G](x),\label{226302}
\end{align}
where in the last inequality, we used the fact that 
\begin{align}
\disp\int_{|x'^{-1}\cdot x|_{\G}\leq 2^{-k}}|x'^{-1}\cdot x|_{\G}|\Psi_j(x'^{-1}\cdot x)|~dx' \leq 2^{-j}(G*\eta_{j,N+2})(x) \label{bbbbbbb}
\end{align}  
and \eqref{2_21_3}. The estimate \eqref{bbbbbbb} follows from \eqref{2_22_1} with $l=N+3$.
On the other hand, we have
\begin{align}
&\disp\int_{|x'^{-1}\cdot x|_{\G}\geq 2^{-k}}|\tilde\psi_k({\mathcal L})\psi_k({\mathcal L})u(x^{\prime})-\tilde\psi_k({\mathcal L})\psi_k({\mathcal L})u(x)||\Psi_j({x'}^{-1}\cdot x)|G(x')~dx'\\
&\leq C\disp\int_{|x'^{-1}\cdot x|_{\G}\geq 2^{-k}}\left\{M[\psi_k({\mathcal L})u](x^{\prime})+M[\psi_k({\mathcal L})u](x)\right\}|\Psi_j({x'}^{-1}\cdot x)|G(x')~dx'\\
&\leq C\disp\int_{|x'^{-1}\cdot x|_{\G}\geq 2^{-k}}M[\psi_k({\mathcal L})u](x')2^{Nj}(1+2^j|x'^{-1}\cdot x|_{\G})^{-(N+3)}G(x')~dx'\\
&+CM[\psi_k({\mathcal L})u](x)\disp\int_{|x'^{-1}\cdot x|_{\G}\geq 2^{-k}}2^{Nj}(1+2^j|x'^{-1}\cdot x|_{\G})^{-(N+3)}G(x')~dx^{\prime}.\\\label{226303}
\end{align}
Since $|x'^{-1}\cdot x|_{\G}\geq 2^{-k}$, we have 
\[(1+2^j|x'^{-1}\cdot x|_{\G})^{-1}\leq 2^{-j}|x'^{-1}\cdot x|_{\G}^{-1}\leq 2^{k-j}.\]
Thus, we obtain
\begin{align}
\eqref{226303} &\leq C2^{k-j}M[M[\psi_k({\mathcal L})u]G](x)+C2^{k-j}M[\psi_k({\mathcal L})u](x)M[G](x),\\\label{226304}
\end{align}
where we used \eqref{2_21_3}. 
So, by \eqref{226302} and \eqref{226304}, we have
\begin{align}
{\rm{I}}_{1,j} &\leq C\disp\sum_{k<j}2^{k-j}\{M[M[\psi_k({\mathcal L})u]G](x)+M[\psi_k({\mathcal L})u](x)M[G](x)\}.\\\label{226305}
\vspace{0.3cm}
\end{align}
\noindent
\underline{Estimate for ${\rm{I}}_{2,j}$}\\

For $k\geq j$, we have
\begin{align}
&\disp\int_{\G}|\tilde\psi_k({\mathcal L})\psi_k({\mathcal L})u(x^{\prime})-\tilde\psi_k({\mathcal L})\psi_k({\mathcal L})u(x)||\Psi_j({x'}^{-1}\cdot x)|G(x')~dx'\\
&\leq \disp\int_{\G}\{M[\psi_k({\mathcal L})u](x')+M[\psi_k({\mathcal L})u](x)\}2^{Nj}(1+2^j|x'^{-1}\cdot x|_{\G})^{-(N+2)}G(x')dx^{\prime}\\[5pt]
&\leq C_N\{M[M[\psi_k({\mathcal L})u]G](x)+M[\psi_k({\mathcal L})u](x)M[G](x)\},
\end{align}
by the same argument of \eqref{226304}. Hence we have
\begin{align}
{\rm{I}}_{2,j} &\leq C\disp\sum_{k\geq j}\{M[M[\psi_k({\mathcal L})u]G](x)+M[\psi_k({\mathcal L})u](x)M[G](x)\}.\label{226306}
\end{align}
Hence by \eqref{2263015}, \eqref{226301}, \eqref{226305} and \eqref{226306}, we obtain
\begin{align}
|\psi_j({\mathcal L})F(u)(x))| &\lesssim H(x)\left[\disp\sum_{k<j}2^{k-j}\{M[\psi_k({\mathcal L})u](x)+M[M[\psi_k({\mathcal L})u]](x)\}\right.\\
             &\hspace{2.0cm}+ \left.\disp\sum_{k\geq j}\{M[\psi_k({\mathcal L})u](x)+M[M[\psi_k({\mathcal L})u]](x)\}\right]\\
             &+\left[\disp\sum_{k<j}2^{k-j}\{M[\psi_k({\mathcal L})u](x)M[H](x)+M[M[\psi_k({\mathcal L})u]H](x)\}\right.\\
             &\hspace{2.0cm}+ \left.\disp\sum_{k\geq j}\{M[\psi_k({\mathcal L})u](x)M[H](x)+M[M[\psi_k({\mathcal L})u]H](x)\}\right].\label{2263016}
\end{align}
 Now we use the following lemma (for instance, see \cite{Tay}, Lemma 4.2.).
\begin{lem}\label{2263017}
If $\delta<1$, then 
\[\disp\sum_{j=-\infty}^{\infty}2^{2j\delta}\left|\disp\sum_{k<j}2^{k-j}a_k\right|^2\leq C\disp\sum_{k=-\infty}^{\infty}2^{2k\delta}|a_k|^2.\]
If $\delta>0$, then
\[\disp\sum_{j=-\infty}^{\infty}2^{2j\delta}\left|\disp\sum_{k\geq j}a_k\right|^2\leq C\disp\sum_{k=-\infty}^{\infty}2^{2k\delta}|a_k|^2.\]
\end{lem}
By \eqref{2263016} and Lemma \ref{2263017}, for $0<\delta<1$, we have
\begin{align}
&\disp\sum_{j=-\infty}^{\infty}2^{2j\delta}|\psi_j({\mathcal L})F(u)(x)|^2\\
&\lesssim H^2(x)\disp\sum_{j=-\infty}^{\infty}2^{2j\delta}\left|\disp\sum_{k<j}2^{k-j}\{M[\psi_k({\mathcal L})u](x)+M[M[\psi_k({\mathcal L})u]](x)\}\right|^2\\
&+H^2(x)\disp\sum_{j=-\infty}^{\infty}2^{2j\delta}\left| \disp\sum_{k\geq j}\{M[\psi_k({\mathcal L})u](x)+M[M[\psi_k({\mathcal L})u]](x)\}\right|^2\\
&+ \disp\sum_{j=-\infty}^{\infty}2^{2j\delta}\left|\disp\sum_{k<j}2^{k-j}\{M[\psi_k({\mathcal L})u](x)M[H](x)+M[M[\psi_k({\mathcal L})u]H](x)\}\right|^2\\
&+\disp\sum_{j=-\infty}^{\infty}2^{2j\delta}\left|\disp\sum_{k\geq j}\{M[\psi_k({\mathcal L})u](x)M[H](x)+M[M[\psi_k({\mathcal L})u]H](x)\}\right|^2\\
&\lesssim H^2(x)\disp\sum_{k=-\infty}^{\infty}2^{2k\delta}\{M[\psi_k({\mathcal L})u](x)+M[M[\psi_k({\mathcal L})u]](x)\}^2\\
&+\disp\sum_{k=-\infty}^{\infty}2^{2k\delta}\{M[\psi_k({\mathcal L})u](x)M[H](x)+M[M[\psi_k({\mathcal L})u]H](x)\}^2.\label{2263018}
\end{align}
So, by \eqref{2263018}, we have
\begin{align}
\|{\mathcal L}^{\frac{\delta}{2}}F(u)\|_p &\lesssim \left\|\left(\disp\sum_{j=-\infty}^{\infty}2^{2j\delta}\left|\psi_j({\mathcal L})F(u)\right|^2\right)^\frac{1}{2}\right\|_p\\
&\lesssim \left\|H\left(\disp\sum_{k=-\infty}^{\infty}2^{2k\delta}\{M[\psi_k({\mathcal L})u]+M[M[\psi_k({\mathcal L})u]]\}^2\right)^{\frac{1}{2}}\right\|_p\\
&+\disp\left\|\left(\disp\sum_{k=-\infty}^{\infty}2^{2k\delta}\{M[\psi_k({\mathcal L})u]M[H]+M[M[\psi_k({\mathcal L})u]H]\}^2\right)^{\frac{1}{2}}\right\|_p\\
&=\rm{II}+\rm{III}.\label{2263019}
\vspace{0.5cm}
\end{align}
\noindent
\underline{Estimate for $\rm{II}$}

By H$\rm\ddot{o}$lder's inequality and Proposition \ref{2_21_21} (the vector-valued maximal theorem of Fefferman - Stein), we have
\begin{align}
&\left\|H\left(\disp\sum_{k=-\infty}^{\infty}2^{2k\delta}\{M[\psi_k({\mathcal L})u]+M[M[\psi_k({\mathcal L})u]]\}^2\right)^{\frac{1}{2}}\right\|_p\\
&\lesssim \|H\|_{\frac{r}{\alpha-1}}\left\|\left(\sum_{k=-\infty}^{\infty}2^{2k\delta}\{M[\psi_k({\mathcal L})u]+M[M[\psi_k({\mathcal L})u]]\}^2\right)^{\frac{1}{2}}\right\|_q\\
&\lesssim \|H\|_{\frac{r}{\alpha-1}}\left(\left\|\left(\disp\sum_{k=-\infty}^{\infty}2^{2k\delta}\{M[\psi_k({\mathcal L})u]\}^2\right)^{\frac{1}{2}}\right\|_q+\left\|\left(\disp\sum_{k=-\infty}^{\infty}2^{2k\delta}\{M[M[\psi_k({\mathcal L})u]]\}^2\right)^{\frac{1}{2}}\right\|_q\right)\\
&\lesssim \|H\|_{\frac{r}{\alpha-1}}\left(\left\|\left(\disp\sum_{k=-\infty}^{\infty}2^{2k\delta}|\psi_k({\mathcal L})u|^2\right)^{\frac{1}{2}}\right\|_q+\left\|\left(\disp\sum_{k=-\infty}^{\infty}2^{2k\delta}|\psi_k({\mathcal L})u|^2\right)^{\frac{1}{2}}\right\|_q\right)\\
&\lesssim \|H\|_{\frac{r}{\alpha-1}}\|{\mathcal L}^{\frac{\delta}{2}}u\|_q\label{2263020}
\end{align}
for $1/p=(\alpha-1)/r+1/q$. 
\vspace{0.5cm}\\
\noindent
\underline{Estimate for $\rm{III}$}\\

We have
\begin{align}
&\disp\left\|\left(\disp\sum_{k=-\infty}^{\infty}2^{2k\delta}\{M[\psi_k({\mathcal L})u]M[H]+M[M[\psi_k({\mathcal L})u]H]\}^2\right)^{\frac{1}{2}}\right\|_p\\
&\leq \disp\left\|\left(\disp\sum_{k=-\infty}^{\infty}2^{2k\delta}\{M[\psi_k({\mathcal L})u]M[H]\}^2\right)^{\frac{1}{2}}\right\|_p\\
&+\disp\left\|\left(\disp\sum_{k=-\infty}^{\infty}2^{2k\delta}\{M[M[\psi_k({\mathcal L})u]H]\}^2\right)^{\frac{1}{2}}\right\|_p\\
&=\rm{III}_1+\rm{III}_2.\label{2263021}
\end{align}
We will estimate $\rm{III}_1$. By H$\rm\ddot{o}$lder's inequality, Proposition \ref{2_21_20} ($L^p$ boundness of $M$) and Proposition \ref{2_21_21} (the vector-valued maximal theorem of Fefferman - Stein), we obtain
\begin{align}
&\disp\left\|\left(\disp\sum_{k=-\infty}^{\infty}2^{2k\delta}\{M[\psi_k({\mathcal L})u]M[H]\}^2\right)^{\frac{1}{2}}\right\|_p\\
&\lesssim \|M[H]\|_{\frac{r}{\alpha-1}}\disp\left\|\left(\disp\sum_{k=-\infty}^{\infty}2^{2k\delta}\{M[\psi_k({\mathcal L})u]\}^2\right)^{\frac{1}{2}}\right\|_p\\
&\lesssim \|H\|_{\frac{r}{\alpha-1}}\disp\left\|\left(\disp\sum_{k=-\infty}^{\infty}2^{2k\delta}|\psi_k({\mathcal L})u|^2\right)^{\frac{1}{2}}\right\|_p\\
&\lesssim \|H\|_{\frac{r}{\alpha-1}}\|{\mathcal L}^{\frac{\delta}{2}}u\|_q\label{2263022}
\end{align}
for $1/p=(\alpha-1)/r+1/q$. Similarly, we will estimate $\rm{III}_2$. By H$\rm\ddot{o}$lder's inequality and Proposition \ref{2_21_21} (the vector-valued maximal theorem of Fefferman - Stein), we obtain
\begin{align}
&\disp\left\|\left(\disp\sum_{k=-\infty}^{\infty}2^{2k\delta}\{M[M[\psi_k({\mathcal L})u]H]\}^2\right)^{\frac{1}{2}}\right\|_p\\
&\lesssim \left\|H\left(\disp\sum_{k=-\infty}^{\infty}2^{2k\delta}\left\{M[\psi_k({\mathcal L})u]\right\}^2\right)^{\frac{1}{2}}\right\|_p\\
&\lesssim \|H\|_{\frac{r}{\alpha-1}}\left\|\left(\disp\sum_{k=-\infty}^{\infty}2^{2k\delta}|\psi_k({\mathcal L})u|^2\right)^{\frac{1}{2}}\right\|_q\\
&\lesssim \|H\|_{\frac{r}{\alpha-1}}\|{\mathcal L}^{\frac{\delta}{2}}u\|_q\label{2263023}
\end{align}
for $1/p=(\alpha-1)/r+1/q$. Therefore by \eqref{2263019}, \eqref{2263020}, \eqref{2263022} and \eqref{2263023}, we have
\begin{align}
\|{\mathcal L}^{\frac{\delta}{2}}F(u)\|_p&\lesssim \|H\|_{\frac{r}{\alpha-1}}\|{\mathcal L}^{\frac{\delta}{2}}u\|_q\\
                                                                       &=\|u\|_r^{\alpha-1}\|{\mathcal L}^{\frac{\delta}{2}}u\|_q.
\end{align}
This completes the proof of Proposition \ref{3_11_1}.
\end{proof}

By Proposition \ref{3_11_1}, we address the proof of  the generalized fractional chain rule on $\G$. 

\noindent
{\bf{The proof of Lemma \ref{3_11_2}}}. When $l=1$, since \eqref{3_11_3} means the fractional chain rule for $0\leq \delta\leq 1$, it has been obtained in Proposition \ref{3_11_1}. The general case follows by induction on $l$. Assume that \eqref{3_11_3} has been proved for $0\leq \delta\leq l-1$, $l\geq 2$. Then by Proposition \ref{3_11_4}, we have  
\begin{align}
\|{\mathcal L}^{\frac{\delta+1}{2}}F(u)\|_p &\leq \disp\sum_{m=1}^k\|{\mathcal L}^{\frac{\delta}{2}}X_mF(u)\|_p\\
                                                                 &=\disp\sum_{m=1}^k\|{\mathcal L}^{\frac{\delta}{2}}(F^{\prime}(u)X_mu)\|_p\\
                                                                 &\leq \disp\sum_{m=1}^k(\|F^{\prime}(u)\|_{r/(\alpha-1)}\|{\mathcal L}^{\frac{\delta}{2}}X_mu\|_q+\|{\mathcal L}^{\frac{\delta}{2}}F^{\prime}(u)\|_{\tau}\|X_mu\|_\sigma ),\\
                                                                 &= \disp\sum_{j=1}^k({\rm{I}}_{1,m}+{\rm{I}}_{2,m})\label{3_12_4}
\end{align}
where $1/p=(\alpha-1)/r+1/q$ and $1/p=1/\tau+1/\sigma$. In the first inequality above, we use the fact, $\|{\mathcal L}^{\frac{\delta+1}{2}}F(u)\|_p\leq C\sum_{m=1}^k\|{\mathcal L}^{\frac{\delta}{2}}X_mF(u)\|_p$ (in detail, see \cite{Cou}, Proposition 21). Since $|F^{\prime}(u)|\leq |u|^{\alpha-1}$, for the first term $\sum_{m=1}^k{\rm{I}}_{m,j}$, we have
\begin{align}
\disp\sum_{m=1}^k{\rm{I}}_{1,m} &= \disp\sum_{m=1}^k\|F^{\prime}(u)\|_{r/(\alpha-1)}\|{\mathcal L}^{\frac{\delta}{2}}X_mu\|_q\\
           &\leq C\disp\|u\|^{\alpha-1}_r\sum_{m=1}^k\|X_mu\|_{\dot{L}^q_{\delta}}\\
           &\leq C\|u\|^{\alpha-1}_r\|u\|_{\dot{L}^q_{\delta+1}}\\
           &=C\|u\|_r^{\alpha-1}\|{\mathcal L}^{\frac{\delta+1}{2}}u\|_q. \label{3_12_1}
\end{align}  
In the last inequality above, we use the fact, $\|f\|_{\dot{L}_{s}^q}\sim \sum_{m=1}^k\|X_mf\|_{\dot{L}^q_{s-1}}$ (for details, see \cite{Hu}, Corollary 21). On the other hand, for the second term $\disp\sum_{m=1}^k{\rm{I}}_{2,m}$, since we assume that \eqref{3_11_3} has been proved for $0\leq \delta\leq l-1$,  we obtain 
\begin{align}
\disp\sum_{m=1}^k{\rm{I}}_{2,m}=\|{\mathcal L}^{\frac{\delta}{2}}F^{\prime}(u)\|_{\tau}\disp\sum_{m=1}^k\|X_mu\|_{\sigma}\leq C\|u\|^{\alpha-2}_r\|{\mathcal L}^{\frac{\delta}{2}}u\|_\rho\disp\sum_{m=1}^{k}\|X_mu\|_{\sigma},\\\label{3_12_3}
\end{align} 
where $1/\rho=1/\tau-(\alpha-2)/r=1/q+1/r-1/\sigma$. Since $\rho$ and $\sigma$ satisfy $1/\rho+1/\sigma=1/q+1/r$ for $1<q,\rho,\sigma<\infty$ and $1<r\leq \infty$, we set
\[\disp\frac{1}{\rho}=\disp\frac{\delta}{(\delta+1)q}+\disp\frac{1}{(\delta+1)r},\ \disp\frac{1}{\sigma}=\disp\frac{1}{(\delta+1)q}+\disp\frac{\delta}{(\delta+1)r}.\]
By Proposition \ref{3_11_5} (the Gagliardo - Nirenberg inequality), we have
\[\|{\mathcal L}^{\frac{\delta}{2}}u\|_{\rho}\leq C\|{\mathcal L}^{\frac{\delta+1}{2}}u\|^{\frac{\delta}{\delta+1}}_q\|u\|_r^{\frac{1}{\delta+1}}\]
and
\[\|X_mu\|_{\sigma}\leq C\|{\mathcal L}^{\frac{1}{2}}u\|_\sigma \leq C\|{\mathcal L}^{\frac{\delta+1}{2}}u\|_q^{\frac{1}{\delta+1}}\|u\|_r^{\frac{\delta}{\delta+1}}.\]
So by \eqref{3_12_3}, we have
\begin{align}
\eqref{3_12_3} &\leq C\|u\|_r^{\alpha-1}\|{\mathcal L}^{\frac{\delta+1}{2}}u\|_q.\label{3_12_5}
\end{align}
By \eqref{3_12_4}, \eqref{3_12_1} and \eqref{3_12_5}, we have for $1\leq \delta+1\leq l$, 
\[\|{\mathcal L}^{\frac{\delta+1}{2}}F(u)\|_p\leq C\|u\|_r^{\alpha-1}\|{\mathcal L}^{\frac{\delta+1}{2}}u\|_q.\]
This completes the proof of Lemma \ref{3_11_2}.\\

\section*{Acknowledgements}
This work was supported by
JSPS KAKENHI Grant Number JP 21K03333.

\noindent
Hiroyuki HIRAYAMA\\
Faculty of Education, University of Miyazaki,\\
1-1, Gakuenkibanadai-nishi, Miyazaki, 889-2192 Japan\\
e-mail: h.hirayama@cc.miyazaki-u.ac.jp\\
\\
\noindent
Yasuyuki OKA\\
School of Liberal Arts and Sciences,  Daido university, \\
10-3 Takiharu-cho, Minami-ku, Nagoya 457-8530 Japan \\
e-mail: y-oka@daido-it.ac.jp


\begin{thebibliography}{99}
\bibitem{Ugu} A. Bonfiglioli, E. Lanconelli and F. Uguzzoni, {\it Stratified Lie groups and potential theory for their sub-laplacians,} {\rm Springer-Verlag,\ Berlin,\ Heidelberg,} (2007).
\bibitem{Bru} T. Bruno, M. Peloso, A. Tabacco and M. Vallarino, {\it Sobolev spaces on Lie groups: embedding theorems and algebra properties}, J. Funct. Anal. {\bf 276} (2019), no. 10, 3014--3050.
\bibitem{Chr} F. M. Christ and M. I. Weinstein, {\it Dispersion of small amplitude solutions of the generalized Korteweg-de Vries equation,} J. Funct. Anal. {\bf100} (1991), no. 1, 87--109.
\bibitem{Cou} T. Coulhon, E. Russ and V. Tardivel-Nachef, {\it Sobolev algebras on Lie groups and Riemannian manifolds,}  Amer. J. Math. {\bf 123} (2001), no. 2, 283--342.
\bibitem{Fef}C. Fefferman and E. M. Stein, {\it Some maximal inequalities,} Amer. J. Math. {\bf 93} (1971), 107--115.
\bibitem{Fis} V. Fischer and M. Ruzhansky, {\it Quantization on nilpotent Lie groups}, {\rm Progress in Mathematics, Vol. {\bf 314}, Birkhauser,} (2016). 
\bibitem{Folland2} G. B. Folland, {\it Subelliptic estimates and function spaces on nilpotent Lie group}, 
Ark. Mat. {\bf 13} (1975), 161--207.
\bibitem{Fol} G. B. Folland and E. M. Stein, {\it{Hardy Spaces on Homogeneous Groups}}, { Math. Notes {\bf 28}, Princeton Univ. Press,} (1982).
\bibitem{Fuji}H. Fujita, {\it On the blowing up of solutions of the Cauchy problem for $u_t=\Delta u+u^{1+\alpha}$}, J. Fac. Sci. Univ. Tokyo Sect. I {\bf 13} (1966) 109--124.
\bibitem{Fur}G. Furioli, C. Melzi and A. Veneruso,  {\it Littlewood-Paley decompositions and Besov spaces on Lie groups of polynomial growth,} Math. Nachr. {\bf 279} (2006), no. 9-10, 1028--1040.
\bibitem{Giga} Y. Giga, {\it Solutions for semilinear parabolic equations in $L^p$ and regularity of weak solutions of the Navier-Stokes system,} J. Differential Equations {\bf 62} (1986), 186--212.
\bibitem{Hu}G. Hu, {\it Littlewood-Paley characterization of H$\ddot{o}$lder-Zygmund spaces on stratified Lie groups,}
Czechoslovak Mathematical Journal, {\bf{69}} (144) (2019), 131--159.
\bibitem{Hul2} A. Hulanicki, {\it A functional calculus for Rockland operators on nilpotent Lie groups,} Studia Math. {\bf{78}} (1984), 253--266.
\bibitem{Hunt}G. A. Hunt, {\it Semi-groups of measures on Lie groups,} Trans. Amer. Math. Soc. {\bf{81}} (1956), 264--293. 
\bibitem{Kato} T. Kato, {\it On nonlinear Schrödinger equations. II. $H^s$-solutions and unconditional well-posedness}, J. Anal. Math. {\bf{67}} (1995), 281--306. 
\bibitem{Oka10} Y. Oka, {\it Local well-posedness for semilinear heat equations on H type groups}, Taiwanese J. Math., {\bf{22}}, No. 5, (2018), 1091--1105.
\bibitem{Oka13} Y. Oka,  {\it An existence and uniqueness result for the Navier-Stokes type equations on the Heisenberg group,} J. Math. Anal. Appl. {\bf 473} (2019), no. 1, 382--407. 
\bibitem{Poh} S. Pohozaev and L. V$\rm\acute{e}$ron, {\it Nonexistence results of solutions of semilinear differential inequalities on the Heisenberg group,}  Manuscripta Math. {\bf 102} (2000), no. 1, 85--99.
\bibitem{Rib} F. Ribaud, {\it Cauchy problem for semilinear parabolic equations with initial data in $H^s_p({\mathbb R}^n)$ spaces},  Rev. Mat. Iberoamericana {\bf 14} (1998), no. 1, 1--46.
\bibitem{Ruz} M. Ruzhansky and D. Suragan, {\it Layer potentials, Kac's problem, and refined Hardy inequality on homogeneous Carnot groups.} {\rm Adv. Math.,} {\bf 308} (2017), 483--528. 
\bibitem{Ruz2} M. Ruzhansky and N. Yessirkegenov, {\it Existence and non-existence of gloabal solutions for semilinear heat equations and inequalities on sub-Riemannian manifolds, and Fujita exponent on unimodular Lie groups,}  J. Differential Equations {\bf 308} (2022), 455--473. 
\bibitem{Saka} K. Saka, {\it Besov spaces and Sobolev spaces on a nilpotent Lie group}, Tohoku Math. J. (2) {\bf 31}  (1979), No. 4, 383--437. 
\bibitem{Ste2} E. M. Stein, {\it Singular integrals and differentiability properties of functions}, {\rm Princeton University Press}, Princeton, N.J.,  (1970).
\bibitem{Staff} G. Staffilani, {\it The initial value problem for some dispersive differential equations,} {\rm Thesis (Ph.D.)--The University of Chicago.} (1995).
\bibitem{Tay} M. E. Taylor, {\it Tools for PDE. Pseudodifferential operators, paradifferential operators, and layer potentials,} {\rm Mathematical Surveys and Monographs, {\bf 81}. American Mathematical Society, Providence, RI,} (2000).
\bibitem{Var2} N. Th. Varopoulos, L. Saloff-Coste and T. Coulhon, { \it Analysis and geometry on groups,} {\rm{Cambridge Tracts in Mathematics, {\bf 100}. Cambridge University Press, Cambridge,}} (1992). 
\bibitem{Wei} {F. B. Weissler}, {\it Local Existence and Nonexistence for Semilinear Parabolic Equations in $L^p$}, Indiana University Mathematics Journal, {\bf 29} (1980), no. 1, 79--102.
\end{thebibliography}
\end{document}